\documentclass[12pt,a4paper]{amsart}
\usepackage[english]{babel}
\usepackage{latexsym}
\usepackage{amssymb}
\usepackage{amscd}

\usepackage[all]{xy}
\usepackage[cp850]{inputenc}
\usepackage[mathscr]{eucal}
\tolerance=2000

\setlength{\oddsidemargin}{0.5cm}
\setlength{\evensidemargin}{0.5cm} \setlength{\textwidth}{15.5cm}

\theoremstyle{plain}

\newtheorem{teor}{Theorem}[section]
\newtheorem{proposition}[teor]{Proposition}
\newtheorem{cor}[teor]{Corollary}
\newtheorem{lema}[teor]{Lemma}
\theoremstyle{remark}

\newcommand{\R}{\mathbb{R}}
\newcommand{\N}{\mathbb{N}}

\def\e{\varepsilon}

\newcommand{\adef}{\begin{defn}}
\newcommand{\zdef}{\end{defn}}
\newtheorem{defn}[teor]{Definition}


\def\dim{\operatorname{dim}}

\def\PO{\operatorname{PO}}

\bibliographystyle{plain}


\newcommand{\aproof}{\begin{proof}}
\newcommand{\zproof}{\end{proof}}

\newcommand{\U}{\mathscr U}

\begin{document}

\title{Banach spaces of almost universal\\ complemented disposition}

\author{Jes\'{u}s M .F. Castillo}
\address{Departamento de Matem\'aticas\\ Universidad de Extremadura\\
Avenida de Elvas\\ 06071-Badajoz\\ Spain} \email{castillo@unex.es}
\author{Yolanda Moreno}
\address{Departamento de Matem\'aticas\\ Universidad de Extremadura\\
Avenida de Elvas\\ 06071-Badajoz\\ Spain} \email{ymoreno@unex.es}

\thanks{This research has been supported in part by project MTM2016-76958-C2-1-P and Project IB16056 de la Junta de Extremadura}
\thanks{\emph{Acknowledgement.} The authors need to thank the truly remarkable job of the referee.}

\subjclass[2010]{46B03, 46M40}
\maketitle

\begin{abstract}  We introduce and study the notion of space of almost universal complemented disposition (a.u.c.d.) as a generalization of Kadec space. We show that every Banach space with separable dual is isometrically contained as a $1$-complemented subspace of a separable a.u.c.d. space and that all a.u.c.d. spaces with $1$-FDD are isometric and contain isometric $1$-complemented copies of every separable Banach space with $1$-FDD. We then study spaces of universal complemented disposition (u.c.d.) and provide different constructions for such spaces. We also consider spaces of u.c.d. with respect to separable spaces.\end{abstract}

\section{Introduction}
This paper can be considered a study of the properties that make the Kadec space \cite{kade} as it is: a separable Banach space containing complemented copies of every separable Banach space with the Bounded Approximation Property. This universal property, which apparently is a global property, is shown here to actually be a local property, called almost universal complemented disposition (a.u.c.d.) and shown to be very similar to Gurariy's almost universal disposition property \cite{gurari}. To emphasize the knot between the Kadec and Gurariy spaces, let us briefly survey the history of (complementably) universal Banach spaces and spaces of universal (complemented) disposition.\medskip

The topic of Banach spaces of universal and almost universal disposition with respect to a class $\mathcal M$ has its inception in the paper \cite{gurari} of Gurariy, who constructed a separable space $\mathcal G$ with the property that every into isometry from a finite dimensional space $F$ into $\mathcal G$ can be extended to an into almost isometry $G\to \mathcal G$ on every finite dimensional superspace $G$ of $F$. Several papers established the isometric uniqueness of $\mathcal G$ \cite{lusky}, its maximality \cite{p-w} and gave different descriptions for $\mathcal G$ \cite{luskysurv,lazarlind,woj}. Gurariy also conjectured the existence of Banach spaces of universal disposition (without ``almost") and of spaces of universal disposition with respect to the class $\mathcal S$ of separable spaces. This conjecture was proved to be true in  \cite{accgm2}, where a general method to construct spaces of universal disposition with respect to different classes $\mathcal M$ was presented. In particular, it was shown that the space that Gurariy conjectured is isometric to the Fra\"iss\'e limit, in the category of separable Banach spaces and into isometries, constructed by  Kubis \cite{kubis}. More recently, the papers \cite{cgk} (resp. \cite{bagekuba}) extend the method of \cite{accgm2} (resp. \cite{kubis}) to the study of quasi-Banach (resp. Fr\'echet) spaces of (almost) universal
disposition.\medskip

The topic of universal Banach spaces for a given class $\mathcal M$, i.e., Banach spaces in $\mathcal M$ containing an isometric/isomorphic copy of every space in $\mathcal M$ is another classical one (see the monograph of Dodos \cite{dodo} to find updated information). The two germinal  results are: the well known fact that $C[0,1]$ is isometrically universal for the class of separable spaces and Pe\l czy\'nsky's construction \cite{pelcuni} of a space $\mathcal P$ isomorphically universal for the class of Banach spaces with basis. Both results are relevant for the study in this paper. For instance, as the authors of \cite{cgk} remark, the fact that a separable space of almost universal disposition is also separably universal (cf. Corollary \ref{thesemiedge} in this paper) depends, in principle, on the isometric uniqueness of the Gurariy space.\medskip

The topic of complementably universal space for a class $\mathcal M$, i.e., spaces in $\mathcal M$ containing
complemented copies of every space in $\mathcal M$, also contains many interesting results, sometimes requiring descriptive set theory techniques; see, for instance, \cite[Theorem 1.2]{kurka}. The topic emerges in 1969 when Pe\l czy\'nski \cite{pelcuni} shows that the space $\mathcal P$ above mentioned is complementably universal for the class of Banach spaces with basis. In 1971 Kadec \cite{kade} obtains a complementably universal member $\mathcal K$ for the class of separable Banach spaces with the Bounded Approximation Property (BAP); still in 1971 Pe{\l}czy\'nski and Wojtaszczyk \cite{p-w} prove that also the class of separably spaces with Finite Dimensional Decompositions has a complementably universal member $\mathcal {PW}$. The classical results of Pe\l czy\'nski \cite{pelcbap} (resp. Pe{\l}czy\'nski-Wojtaszczyk) asserting that a separable Banach space  has the BAP if and only if it is complemented in a space with basis (resp. FDD) implies that the spaces $\mathcal P, \mathcal K$ and $\mathcal{PW}$ contain complemented copies of all separable spaces with BAP. Kalton \cite{kaltuni} performs a study of universal and complementably universal $F$-spaces, and remarks ``there are a number of other existence and non-existence results
known for other classes of separable spaces". It cannot go however without saying that Johnson and Szankowski \cite{johnszank1} showed that no separable complementably universal space exists for the class of separable Banach spaces. A related topic is that of when a Banach space with a property $P$ can be embedded into some Banach space with a finite dimensional decomposition \emph{and} property $P$. See, e.g., \cite{kurka,os}.

All spaces $\mathcal K$, $\mathcal P$ and $\mathcal{PW}$ are isomorphic (see Lemma \ref{mix}). The isometric uniqueness of complementably universal spaces is a different thing. Garbulinska recovered in \cite{garbula} the Fra\"iss\'e limit approach to construct first
a complementably universal space $\mathscr G$ for the class of separable spaces with FDD (thus isomorphic to the spaces of Kadec and Pe\l
czy\'nski) with a certain local isometric property. A closer inspection of the property that makes the space $\mathscr  G$ isometrically unique reveals that it is a local property, the one we have called almost universal complemented disposition and which is the object of study in this paper.\medskip

Sometimes the notation and results we present are rather technical. Thus, to encourage the reader and ease his way to the precise statements and proofs, let us briefly present intuitive versions of the main results in this paper. First of all, the key definition: A Banach space $E$ will be called of \emph{almost universal complemented disposition} (a.u.c.d. in short) if whenever one has an isometric embedding $F \to G$ between finite dimensional spaces with complemented range, every isometric embedding $F\to E$ with complemented range admits an extension to an almost isometric embedding $G\to E$ with complemented range. Regarding separable a.u.c.d. spaces our main results are:
\begin{itemize}
\item There exist separable spaces of almost universal complemented disposition. In fact, every Banach space with separable dual can be isometrically embedded as a $1$-complemented subspace of a separable a.u.c.d. space (Theorem \ref{KX}).
\item The separable spaces of a.u.c.d. are not unique. However, all separable spaces of a.u.c.d. with a $1$-FDD are isometric (Proposition \ref{noisomo} and Theorem \ref{uniqGarbu}).
\item Every separable space of a.u.c.d. with a 1-FDD contains isometric copies of every separable space and isometric 1-complemented copies of every separable space with a $1$-FDD (Theorem \ref{garbfdd}).
\end{itemize}

The pi{\`{e}}ce of resistance of our analysis is the so-called Approximation Lemma \ref{compfelix} that roughly says that if $E$ is Banach space of almost universal complemented disposition with a $1$-FDD then every almost isometry $F\to E$ from a finite dimensional space $F$ having almost complemented range can be approximated by an into isometry with complemented range.

Thus, since the Gurariy space $\mathcal G$ is the only separable space of almost universal disposition while the Kadec space $\mathcal K$ is the only separable space of almost universal complemented disposition with $1$-FDD, these two spaces represent, in a sense, the same object in different categories. Indeed, if one moves from the category of Banach spaces and isometric embeddings to its ``complemented" analogue, i.e., the category Banach spaces and isometric embeddings admitting a norm one projection, then the separable spaces become the separable spaces with $1$-FDD. It is clear than the ``Gurariy objects" (i.e., the spaces of (almost) universal disposition) become the ``Kadec objects" (the spaces of (almost) universal complemented disposition). Here it is the list of analogies:

\begin{itemize} \item The Gurariy space:
\begin{enumerate}
\item Is a space of almost universal disposition in the category of separable Banach spaces and single arrows (into isometries).
\item It can be obtained as the Fra\"iss\'e limit of separable rational Banach spaces and single arrows.
\item It can be constructed via an $\omega$-times iterated push-out out from a countable dense set of single arrows
    between finite-dimensional Banach spaces.
\item In the category, it is unique, up to isometries.
\item It contains isometric copies of all separable Banach spaces.
\item It is an $\mathcal L_\infty$-space.\medskip
\end{enumerate}

\item The Kadec space:
\begin{enumerate}
\item Is a space of almost universal disposition in the category of separable Banach spaces and double arrows (into isometries
    admitting norm one projections).
\item It can be obtained as the Fra\"iss\'e limit of separable rational Banach spaces with $1$-FDD and double
    arrows.
\item It can be constructed via an $\omega$-times iterated push-out out from a countable dense set of double arrows
    between finite-dimensional Banach spaces.
\item In the category, it is unique, up to isometries.
\item It contains isometric complemented copies of all separable Banach spaces with $1$-FDD. As a by-product,
    it contains isometric copies of all separable Banach spaces.
    \item It is not an $\mathcal L_\infty$-space.
\end{enumerate}
\end{itemize}

In the second part of the paper we introduce and study the notions of space of universal complemented disposition (u.c.d.) and
of space of universal complemented disposition with respect to separable spaces ($\omega$-u.c.d.), their existence
(Every Banach space can be isometrically embedded as a $1$-complemented subspace of a space of $(\omega$-)universal complemented disposition -- cf. Propositions \ref{univfin} and \ref{univomega}), universality and uniqueness properties.\smallskip

The case of $p$-Banach spaces, $0<p<1$ has been treated in a separate paper \cite{ccm} with entirely different techniques.

\section{Almost universal complemented disposition}

All required technical results, definitions and constructions have been gathered in the Appendix section \ref{basic} at the end of the paper. The key notions for this paper are those of double-arrow and almost double arrow, that we state now. A
$(1 + \varepsilon)$-isometry is a linear continuous operator $f : A \to B$ such that for every $x\in A$ verifies $$(1 + \varepsilon)^{-1}\|x\|\leq \|f(x)\|\leq (1 + \varepsilon) \|x\|.$$
We will say that $f$ is contractive if it verifies $(1 + \varepsilon)^{-1}\|x\|\leq \|f(x)\|\leq \|x\|$.\medskip

Given $\alpha, \gamma>1$ and $\beta\geq 0$ a (contractive) $(\alpha,\beta,\gamma)$-arrow is a pair $(f, \overline f)$ of linear continuous operators, $f:A\to B$ and $\overline f:B\to A$ in which $f$ is a (contractive) $\alpha$-isometry, $\|\overline f\| \leq \gamma$ and $\|\overline f f - 1_A\| < \beta$. To simplify some notation, $(1,0,1)$-arrows will be called double arrows, and pairs $(f, \overline f)$ which are $(\alpha, \beta, \gamma)$-arrows for suitable $\alpha, \beta, \gamma $ will be called almost double arrows and depicted as $(f, \overline f): A \leftrightarrows B$.\medskip

Given three almost double arrows $(i_1, \overline i_1) : A \leftrightarrows C$, $(i_2, \overline i_2): A \leftrightarrows B$ and $(i_3, \overline i_3): B \leftrightarrows C$ we will say that the diagram they form

$$\xymatrix{
 A \ar[rr]_{i_1} \ar@<-1ex>[rd]_{i_2} && C \ar@<-1ex>[ll]_{\overline {i_1}} \ar[ld]_{\overline {i_3}}\\
&B \ar@<-0.5ex>[lu]_{\overline {i_2}}  \ar@<-1ex>[ru]_{i_3}}
$$

$\varepsilon$-commutes if $\|i_3i_2 - i_1\| \leq \varepsilon$ and $\|\overline i_2\overline i_3 - \overline i_1\| \leq
\varepsilon$. We will say it almost commutes if there exists $\varepsilon >0$ such that the diagram
$\varepsilon$-commutes. And we will say that it commutes if $i_3i_2 = i_1$ and $\overline i_2\overline i_3 = \overline i_1$.\medskip


\adef A Banach space $E$ will be called of \emph{almost universal complemented
disposition} (a.u.c.d., in short) if for every double arrow $(i, \overline i) : F \leftrightarrows G$ between finite dimensional spaces, every double arrow $(j, \overline j): F \leftrightarrows E$ and every $\varepsilon >0$ there exists a $(1+\varepsilon, \varepsilon, 1)$-double arrow $(J, \overline J): G \leftrightarrows E$ making a commutative diagram

$$\xymatrix{
 F \ar[rr]_{i} \ar@<-1ex>[rd]_{j} && G \ar@<-1ex>[ll]_{\overline i} \ar[ld]_{ J}\\
&E \ar@<-0.5ex>[lu]_{\overline j}  \ar@<-1ex>[ru]_{\overline J}}
$$ \zdef

By Lemma \ref{equiv}, the condition is equivalent to the existence of a $(1+\varepsilon, \varepsilon, 1+\varepsilon)$-double arrow $(J, \overline J): G \leftrightarrows E$ making the diagram $\varepsilon$-commutative. This property essentially corresponds to property [E] of Garbulinska \cite{garbula}, although in that paper only the
almost commutativity of injections is mentioned; the almost commutativity of projections is however used.\medskip

Our immediate purpose is to establish a key Approximation Lemma that will explain the structure of spaces of almost universal complemented disposition. To perturbate projections we will use Lemma \ref{close}, which is modeled upon \cite[Thm. 1.a.9]{lindtzaf}. In order to give an estimate for the distance between projections, a proof is included there. Let us recall that we call a skeleton for $E$ to a sequence $(E_n)$ of  finite-dimensional subspaces so that each $E_n$ is $1$-complemented in $E_{n+1}$ and $E=\overline{\cup_n E_n}$. See section \ref{skeleton} for details.

\begin{lema}[Approximation Lemma]\label{compfelix} Let $\varepsilon <1/3$. If $E$ is Banach space of almost universal complemented
disposition admitting a skeleton then every $(1+\varepsilon, \varepsilon, 1+\varepsilon)$-arrow $(f, \overline f): F\leftrightarrows E$ with $F$ finite-dimensional admits a $(1,0,1)$-arrow $(\phi, \overline \phi): F\leftrightarrows E$ at distance at most $72\varepsilon$.
\end{lema}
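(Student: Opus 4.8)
The plan is to start from a $(1+\e,\e,1+\e)$-arrow $(f,\overline f):F\leftrightarrows E$ and improve it in two independent stages: first replace the $\e$-isometry $f$ by an honest isometry $\phi$, and then repair the projection $\overline f$ so that the pair becomes a genuine double arrow $(\phi,\overline\phi)$. The essential tool is the a.u.c.d. property itself, which lets me extend a \emph{good} double arrow on a subspace to a \emph{slightly worse} double arrow on a larger finite-dimensional space. The trick, as usual in these Gurariy-type arguments, is to factor $(f,\overline f)$ through a model double arrow $(i,\overline i):F\leftrightarrows G$ between finite-dimensional spaces in which $i$ is a genuine isometry and $\overline i\,i=1_F$, so that applying the definition of a.u.c.d. to $(i,\overline i)$ and to an exact double arrow $F\leftrightarrows E$ produces a new arrow $G\leftrightarrows E$ whose restriction to (the image of) $F$ is close to the original $f$ but now factors through the isometric $i$.

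Concretely, I would first use the skeleton to find a finite-dimensional $E_n$ containing, up to $\e$, the range of $f$; this is where admitting a skeleton is used, since it guarantees that $f(F)$ sits almost inside a $1$-complemented finite-dimensional subspace and lets me regard the situation as taking place in finite dimensions. Next I would build the model space $G$: take $G=F\oplus_\infty F$ or, more to the point, let $G$ be $F$ equipped with a modified norm together with the isometry $i:F\to G$ and a norm-one projection $\overline i$ realizing $\overline i\,i=1_F$ and $\|i\,\overline i-1_G\|=0$; the point is that $(i,\overline i)$ is an exact double arrow between finite-dimensional spaces while $f$ is obtained from $i$ by a perturbation of size $O(\e)$. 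I also need an exact double arrow $(j,\overline j):F\leftrightarrows E$ that $O(\e)$-approximates the given $(f,\overline f)$; this can be manufactured from $f$ by a Gram–Schmidt / small-perturbation argument on the finite-dimensional space $f(F)\subset E_n$, correcting $f$ to an isometry into $E$ and correcting $\overline f$ to a norm-one projection onto it.

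Having these two exact double arrows, I apply the a.u.c.d. definition to the pair $(i,\overline i):F\leftrightarrows G$ and $(j,\overline j):F\leftrightarrows E$ to obtain a $(1+\e,\e,1)$-double arrow $(J,\overline J):G\leftrightarrows E$ making the triangle commute. Composing with the exact isometry $i$, the map $\phi_0=J\,i:F\to E$ is then a $(1+\e,\e)$-arrow that already $O(\e)$-approximates $f$, and a final iteration/limit argument on the skeleton upgrades $\phi_0$ to a genuine into-isometry $\phi:F\to E$ with an exact norm-one projection $\overline\phi$ onto its range. At each stage I must keep track of how the errors compose: a $(1+\e)$-isometry composed with a $(1+\e)$-isometry is a $(1+\e)^2$-isometry, commutators of the form $\|\overline\phi\,\phi-1_F\|$ accumulate additively, and the passage from ``$\e$-isometry'' to ``isometry'' by the standard telescoping/convergence trick costs a geometric series $\e+\e^2+\cdots$; bookkeeping all of these is exactly what produces the explicit constant $72\e$ in the statement (which is why the hypothesis $\e<1/3$ is imposed, to keep the geometric series summable and the perturbed projections within norm control via Lemma \ref{close}).

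The main obstacle I anticipate is \emph{simultaneously} controlling the isometry and the projection: it is easy to perturb $f$ into an isometry, and easy to perturb $\overline f$ into a projection, but doing both so that the resulting pair is still an arrow — i.e. so that $\overline\phi\,\phi=1_F$ \emph{exactly} while $\phi$ is an isometry \emph{exactly} — requires the two corrections not to fight each other. This is precisely the role of Lemma \ref{close} (estimating the distance between projections with nearby ranges) and of the commutativity of \emph{both} the injections and the projections built into the definition of a.u.c.d.; the delicate point is that the a.u.c.d. property only gives an \emph{almost} commuting projection square, so I must feed back the resulting small error $\|\overline i\,\overline J-\overline j\|\le\e$ through Lemma \ref{close} to bound the distance from $\overline J$'s induced projection to a genuine norm-one projection onto $\phi(F)$, and it is the compounding of this projection estimate with the isometry estimate that I expect to be the hardest part of the constant-tracking.
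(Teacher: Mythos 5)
There is a genuine gap, and it sits at the heart of your argument: the step where you ``manufacture'' an exact double arrow $(j,\overline j):F\leftrightarrows E$ that $O(\e)$-approximates $(f,\overline f)$ ``by a Gram--Schmidt / small-perturbation argument''. That step is precisely the statement of the Approximation Lemma itself, so your proof is circular. In a general Banach space --- and even in one of a.u.c.d., before the lemma has been established --- a $(1+\e)$-isometry cannot be perturbed into an exact isometry, nor an almost-projection into an exact norm-one projection, by finite-dimensional linear algebra: Lemma \ref{close} only produces a projection of norm roughly $C\frac{1-\e^2}{1-3\e}>1$, never exactly $1$, and the very existence of a nearby exact $(1,0,1)$-arrow is what the lemma asserts (note that Proposition \ref{finite}, the $1$-complemented isometric embedding of finite-dimensional spaces, is deduced \emph{from} the Approximation Lemma, not available before it). Since your application of the a.u.c.d. definition requires an exact double arrow $F\leftrightarrows E$ as input, the whole scheme collapses at that point.

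The paper's proof avoids exactly this trap by transferring the exactness requirement from the maps into $E$ onto an enlarged finite-dimensional \emph{target}: after the perturbation step (which, like yours, lands the arrow in a skeleton piece $E_n$), the Correction Lemma \ref{compgarbu} (Garbulinska's almost push-out) produces a finite-dimensional space $G_1$ together with exact $(1,0,1)$-arrows $(i_1,\pi_1):F\leftrightarrows G_1$ and $(i_2,\pi_2):E_n\leftrightarrows G_1$ whose square only \emph{almost} commutes --- exactness is bought by enlarging the codomain, which is always possible, rather than by perturbing maps into a fixed codomain, which is not. The a.u.c.d. property is then applied not to an arrow out of $F$, but to the exact arrow $(i_2,\pi_2):E_n\leftrightarrows G_1$ paired with the skeleton arrow $(\imath_n,\overline\imath_n):E_n\leftrightarrows E$, both of which exist for free; the improved approximant to $(f,\overline f)$ is the composite $(g_1i_1,\,\pi_1\overline{g_1})$. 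Each such round turns a $(1+\e,\e,1+\e)$-arrow into a $(1+\e',\e',1+\e')$-arrow ($\e'$ arbitrarily small) at cost $36\e$, and only in the limit of the iteration --- which moreover requires an ultrapower/local-reflexivity push-down, because the projection estimates hold only on the pieces $E_n$ and not on all of $E$, a point your sketch also passes over --- does one obtain the exact $(1,0,1)$-arrow at distance $72\e$. If you replace your circular step by this ``correct-by-enlarging-the-target'' device, the rest of your outline (error bookkeeping, geometric series, the role of $\e<1/3$) can be carried through essentially as you describe.
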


\begin{proof} If $(f, \overline {f})$ is a $(1 + \nu, \nu, 1+ \nu)$-arrow then, according to the estimate (3) in Lemma \ref{casiequiv},
$(f/(1 + \nu), \overline {f}/(1 + \nu))$ would be a contractive $(1 +\varepsilon, \varepsilon, 1)$-arrow with $\varepsilon = 3\nu$. Thus, there is no loss of generality assuming that $(f,\overline f)$ is a contractive $(1+\varepsilon, \varepsilon, 1)$-arrow.\medskip

$i)$ \emph{Perturbation step}. Since $E$ has skeleton, it also has a sequence $(E_n)$ of finite dimensional $1$-complemented subspacese so that $E=\overline{\bigcup E_n}$. Let $\imath_n: E_n\to E$ be the isometric embedding with $1$-projection $\overline {\imath_n}: E\to E_n$. The perturbation arguments in Lemma \ref{close} show that if one sets $\e'$ so that $\e'\leq \e\frac{(1+\e)^2}{1-\e}$ and $1+\e'\leq \frac{1-\e^2}{1-3\e}$ then it is possible to find for $\varepsilon<1/3$ and $n$ large enough a $(1+\varepsilon')$-isometry $\tau_{\varepsilon'} : f(F) \to F' \subset E_n$ so that $$\|f - \tau_{\varepsilon'}f\|\leq \varepsilon'$$
 and a projection $p_{\varepsilon'} : E \to F'$ having norm at most $\|p_{\varepsilon'}\|\leq 1 +\varepsilon'$ such that
 \begin{equation}\label{pro}
 \|p_{\varepsilon'} - \tau_{\varepsilon'}f\;\overline f\|\leq \varepsilon'.\end{equation}  A diagram will help to understand the situation
$$\xymatrix{
&F \ar[d]_{f}\\
&f(F)\ar[ld]_{\tau_\varepsilon'}\ar@{^{(}->}[dd] \\
F' \ar@{^{(}->}[d]\\
{E_n} \ar[r]^{\imath_n} &E \ar[lu]_{p_\varepsilon'} \ar@<-3ex>[uuu]_{\overline f} \ar@<2ex>[l]^{\overline {\imath_n}}}$$

 It is clear that $\|\overline f\; \tau_{\varepsilon'}^{-1}p_{\varepsilon'}\|\leq \frac{1+\varepsilon'}{1-\varepsilon'} \leq 1 + 3\varepsilon' <1+\varepsilon$ and it follows from the estimate (1) in Lemma \ref{casiequiv} that $\tau_{\varepsilon'}f$ is a $(1 + 3\varepsilon)$-isometry. Thus, $(\tau_{\varepsilon'}f, \overline f\; \tau_{\varepsilon'}^{-1}p_{\varepsilon'}i_n): F\leftrightarrows E_n$ is a $( 1 + 3\varepsilon, \varepsilon, 1 + \varepsilon)$-arrow and it follows from the estimate (3) in Lemma \ref{casiequiv}
 that $$(f_1, \overline {f_1}) = \left(\frac{\tau_{\varepsilon'}f}{1 + 3\varepsilon}, \frac{\overline f\; \tau_{\varepsilon'}^{-1}p_{\varepsilon'}i_n}{1 + 3\varepsilon'} \right)$$
is a 
contractive
 $(1 + 6\varepsilon, 6\varepsilon, 1)$-arrow. Moreover, $\|f -  f_1\|\leq 1- \frac{1+\varepsilon'}{1 + 3\varepsilon }\leq \frac{3\varepsilon +\varepsilon'}{1 + 3\varepsilon }\leq 4 \varepsilon$ and, taking into account the estimate (\ref{pro}) above, one gets
$$\|\overline f -  \overline f\; \tau_{\varepsilon'}^{-1}p_{\varepsilon'}\| \leq \|\overline f -  \overline f\; \tau_{\varepsilon'}^{-1}( \tau_{\varepsilon'}f \;\overline f + p_{\varepsilon'} - \tau_{\varepsilon'}f\;\overline f) \| = \|\overline f\; \tau_{\varepsilon'}^{-1}(p_{\varepsilon'} - \tau_{\varepsilon'}f\;\overline f) \| +\varepsilon \leq \frac{\varepsilon}{1-\varepsilon'}$$
which gives
\begin{eqnarray*}\|\overline f - \overline {f_1}\| &=&  \left(1- \frac{1}{1+3\varepsilon'}\right) + \frac{1}{1+3\varepsilon'}\|\overline f -  \overline f\; \tau_{\varepsilon'}^{-1}p_{\varepsilon'}\|\\&\leq& 3\varepsilon' + (1+\varepsilon') \| \overline f -  \overline f\; \tau_{\varepsilon'}^{-1}p_{\varepsilon'}\|\\& \leq& 3\varepsilon' + \frac{(1+\varepsilon')\varepsilon}{1-\varepsilon'}\\&\leq& 4\varepsilon .\end{eqnarray*}

$ii)$ \emph{Correction step.} Apply the correction Lemma \ref{compgarbu} to $(f_1, \overline {f_1}): F\leftrightarrows E_n$ to get a $6\varepsilon$-commutative diagram

$$\xymatrix{
 F \ar@<-1ex>[dd]_{f_1} \ar@<0.5ex>[rd]_{i_1} \\
   & G_1\ar@<-2ex>[lu]_{\pi_1} \ar@<2ex>[ld]^{\pi_2}\\
 E_n\ar[uu]_{\overline {f_1}} \ar@<-0.5ex>[ru]^{i_2}}$$
in which $(i_1, \pi_1)$ and $(i_2, \pi_2)$ are $(1,0,1)$-arrows and moreover $\pi_1i_2 = \overline {f_1}$ and $\pi_2i_1 = f_1$.\medskip

$iii)$ \emph{Almost universal complemented disposition step}. Use now that $E$ is of a.u.c.d. to get an $\varepsilon'$-commutative diagram
$$\xymatrix{
 E_n \ar[r]_{i_2} \ar@<-2ex>[d]_{\imath_n} & G_1 \ar@<-1ex>[l]_{\pi_2} \ar@<1.5ex>[ld]_{g_1}\\
E \ar@<0.5ex>[u]_{\overline {\imath_n}}  \ar@<-2.5ex>[ru]_{\overline {g_1}}}
$$in which  $(g_1, \overline {g_1}): G_1\leftrightarrows E$  is a  $(1 + \varepsilon', \varepsilon', 1)$-arrow that extends $(\imath_n , \overline {\imath_n})$. Thus, $(g_1i_1, \pi_1\;\overline {g_1}) : F \leftrightarrows E$ is a $(1 + \varepsilon', \varepsilon',1 )$-arrow. Moreover,
$$ \|g_1 i_1 - f\| \leq \|g_1 i_1 - g_1i_2f_1\| + \|g_1i_2f_1 - f\| \leq  (1 + \varepsilon') 6\varepsilon  + \varepsilon' + \|\imath_n f_1 - f\|
\leq  8\varepsilon + 4\varepsilon = 12\varepsilon$$
And
$$
\|\pi_1 \;\overline {g_1} - \overline f\| \leq \|\pi_1\; \overline {g_1}- \overline {f_1}\;\pi_2\; \overline {g_1} \| + \|\overline {f_1}\;\pi_2 \;\overline {g_1} - \overline {f}\| \leq 6\varepsilon + \|\overline {f_1}\;\pi_2\; \overline {g_1} - \overline {f}\| \leq 6\varepsilon + \varepsilon' + \|\overline {f_1}\; \overline {\imath_n} - \overline {f}\|.$$
From where we get $\|(\pi_1 \overline {g_1} - \overline {f})_{|_{E_n}}\|\leq 12\varepsilon$.\medskip

We have thus obtained that each $(1+\varepsilon, \varepsilon, 1+\varepsilon)$-arrow $F \leftrightarrows E$ can be $36\varepsilon$-approximated by a $(1+\varepsilon', \varepsilon', 1+\varepsilon')$-arrow for any $\varepsilon'\in (0, \varepsilon/3)$ on $E_n$ for $n$ large enough.\medskip

$iv)$ \emph{Ultraperturbation and iteration}. Assume without loss of generality that $E_n = E_1$ in the first step, $E_n=E_2$ in the second step and so on.
We have thus obtained a sequence $(f_n, \overline{f_n})$ of contractive $(1 +\varepsilon_n, \varepsilon_n, 1+\varepsilon_n)$-arrows such that $\|(f_n, \overline{f_n}) - (f_{n+1}, \overline{f_{n+1}})\|\leq 36\varepsilon_n$ on $E_n$. Pick the sequence of $\varepsilon_n$ monotone decreasing so that $\sum \varepsilon_n =\varepsilon$.

We use now a the ultraperturbation argument explained in Lemma \ref{equiv}, with a slight variation since this particular case is simpler. Pick a countably incomplete ultrafilter $\mathscr U$ on $\N$. It is clear that $F_{\mathscr U}\ \leftrightarrows {(E_n)}_{\mathscr U}$ is a $(1, 0, 1)$-arrow at distance $36\varepsilon$ of $(f, \overline f)$ on the whole canonical copy of $E$ inside $E_{\mathscr U}$. The point is that
its image likely does not lie in $E$. We can use then principle of local reflexivity we can push-down this arrow back to $E$ using the argument in Lemma \label{equiv}: given the finite dimensional subspace $[f_n](F)$ pick an $\varepsilon'$-isometry $T_{\varepsilon'}: [f_n](F)\to E$ which is almost a projection and replace the embedding $[f_n]$ by $T_{\varepsilon'}[f_n]$ (see Lemma \label{equiv} for details). The projection thus remains as it was while the inclusion $[f_n]$ is slightly perturbed with the $\varepsilon'$ one prefers so that it takes values in $E$. Call
 $[f_n]'$ this perturbed inclusion to simplify. The new arrow $(u_n, \overline{u_n}) = ([f_n]', [\overline{f_n}]_{|E}): F \leftrightarrows E$  is a $(1 +\varepsilon',  \varepsilon', 1)$-arrow  at distance at most $36\varepsilon + \varepsilon'$ of the original $(f, \overline{f})$ and $\|(u_n, \overline{u_n}) - (u_{n+1}, \overline{u_{n+1}})\|\leq 36\varepsilon_n$ on $E$. Then, both $(u_n)$ and $(\overline{u_n})$ are Cauchy sequences and thus they converge to a $(1, 0, 1)$ arrow $(\phi, \overline{\phi})$ at distance $72\varepsilon$ from $(f, \overline f)$.\end{proof}

An immediate corollary from the Approximation Lemma is:

\begin{proposition}\label{finite} A Banach space of almost universal complemented disposition with skeleton contains isometric $1$-complemented copies of every finite-dimensional Banach space.\end{proposition}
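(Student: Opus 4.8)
The plan is to deduce Proposition~\ref{finite} directly from the Approximation Lemma~\ref{compfelix}. Let $F$ be a finite-dimensional Banach space; I want to produce an isometric embedding of $F$ into $E$ whose range is $1$-complemented, i.e. a genuine $(1,0,1)$-arrow $(\phi,\overline\phi):F\leftrightarrows E$. The Approximation Lemma already manufactures such an exact double arrow, but only \emph{starting} from an approximate one, so the task reduces to checking that $F$ admits an $(1+\varepsilon,\varepsilon,1+\varepsilon)$-arrow into $E$ for arbitrarily small $\varepsilon$. The lemma then upgrades this to an exact $(1,0,1)$-arrow, and the image of $\phi$ is exactly the desired isometric $1$-complemented copy of $F$ (with $\overline\phi$ furnishing a norm-one projection onto it, since $\overline\phi\phi=1_F$ and $\|\overline\phi\|\le 1$).

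First I would use that $E$ has a skeleton $(E_n)$ with $E=\overline{\bigcup_n E_n}$, each $E_n$ finite-dimensional and $1$-complemented via $\imath_n:E_n\to E$ and the norm-one projection $\overline{\imath_n}:E\to E_n$. Fixing a small $\varepsilon<1/3$, I would approximate the finite-dimensional space $F$ by a subspace sitting inside some $E_n$: concretely, pick a basis of $F$ and, since $\bigcup_n E_n$ is dense, choose for $n$ large enough vectors in $E_n$ close to the images of the basis, defining a $(1+\varepsilon)$-isometry $f:F\to E_n\subset E$ (a standard perturbation argument on a finite-dimensional space makes this an into $(1+\varepsilon)$-isometry). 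For the backward map I would take $\overline f=f^{-1}\overline{\imath_n}:E\to F$, where $f^{-1}$ is the inverse of $f$ on its range $f(F)\subset E_n$; because $\overline{\imath_n}$ has norm one and $f^{-1}$ has norm at most $(1+\varepsilon)$ on $f(F)$, one controls $\|\overline f\|\le 1+\varepsilon$ and $\|\overline f f-1_F\|$ small. This makes $(f,\overline f):F\leftrightarrows E$ an $(1+\varepsilon,\varepsilon,1+\varepsilon)$-arrow, after absorbing constants into the choice of $\varepsilon$.

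With such an approximate double arrow in hand, Lemma~\ref{compfelix} supplies an exact $(1,0,1)$-arrow $(\phi,\overline\phi):F\leftrightarrows E$ at distance at most $72\varepsilon$. Since $(\phi,\overline\phi)$ is a genuine double arrow, $\phi$ is an isometry onto its image and $\overline\phi\phi=1_F$ with $\|\overline\phi\|\le 1$; hence $\phi\overline\phi$ is a norm-one projection of $E$ onto the isometric copy $\phi(F)$, which is therefore $1$-complemented. This establishes the conclusion.

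The main obstacle is the construction in the previous paragraph: verifying that the naive approximate embedding of $F$ into $E_n$, together with the composed backward map $f^{-1}\overline{\imath_n}$, really gives the arrow parameters $(1+\varepsilon,\varepsilon,1+\varepsilon)$ with uniform control as $n\to\infty$. This is a routine but slightly delicate perturbation estimate, and one must be careful that the bounds $\|\overline f\|\le 1+\varepsilon$ and $\|\overline f f-1_F\|<\varepsilon$ follow from the density of $\bigcup_n E_n$ and the finite-dimensionality of $F$; once that is in place, the Approximation Lemma does all the real work of removing the $\varepsilon$'s. A minor point to record is that the distance estimate $72\varepsilon$ is irrelevant to the final qualitative statement, which only asserts existence of an exact isometric $1$-complemented copy.
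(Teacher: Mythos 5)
There is a genuine gap, and it sits exactly where you flag a ``routine but slightly delicate perturbation estimate'': the construction of the initial $(1+\varepsilon,\varepsilon,1+\varepsilon)$-arrow $(f,\overline f):F\leftrightarrows E$ never gets off the ground. You propose to choose vectors in $E_n$ ``close to the images of the basis'' of $F$ --- but images under what map? Here $F$ is an arbitrary abstract finite-dimensional Banach space with no given relation to $E$; there is no map $F\to E$ to perturb. Density of $\bigcup_n E_n$ in $E$ only lets you push subspaces \emph{of $E$} into the skeleton (that is what the perturbation step inside the proof of Lemma~\ref{compfelix} does); it cannot conjure an almost isometric copy of an external space $F$. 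Indeed, your intermediate claim --- that every finite-dimensional $F$ admits a $(1+\varepsilon,\varepsilon,1+\varepsilon)$-arrow into any space with a skeleton --- is false: $\ell_2$ has a skeleton (its coordinate $1$-FDD), yet $\ell_1^2$ admits no $(1+\varepsilon)$-isometric embedding into $\ell_2$ for small $\varepsilon$, since every $2$-dimensional subspace of $\ell_2$ is isometric to $\ell_2^2$ and the Banach--Mazur distance from $\ell_1^2$ to $\ell_2^2$ equals $\sqrt{2}$. So the input to the Approximation Lemma cannot come from the skeleton alone; it must come from the a.u.c.d.\ property itself, which your argument invokes only through Lemma~\ref{compfelix}, i.e.\ too late.

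The repair is short, and it is what makes the Proposition an ``immediate corollary'' in the paper. Fix a one-dimensional subspace $F_0\subset F$ spanned by a norm-one vector $x_0$. By Hahn--Banach the inclusion $i:F_0\to F$ admits a norm-one projection $\overline i:F\to F_0$, so $(i,\overline i):F_0\leftrightarrows F$ is a double arrow between finite-dimensional spaces; likewise, sending $x_0$ to any norm-one vector $e\in E$ and using a norming functional for $e$ yields a double arrow $(j,\overline j):F_0\leftrightarrows E$. Now apply the a.u.c.d.\ property of $E$ directly to this pair: it produces a $(1+\varepsilon,\varepsilon,1)$-arrow $(J,\overline J):F\leftrightarrows E$. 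This is the approximate arrow your argument was missing, and from here your remaining steps are correct: Lemma~\ref{compfelix} (which is where the skeleton hypothesis enters) upgrades $(J,\overline J)$ to a genuine $(1,0,1)$-arrow $(\phi,\overline\phi):F\leftrightarrows E$, and then $\phi\overline\phi$ is a norm-one projection of $E$ onto the isometric copy $\phi(F)$.
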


It may seem strange, but we do not know if this result can be obtained without the skeleton assumption. Observe that another reading of Approximation Lemma \ref{compfelix} is that every finite dimensional subspace of a space of almost universal complemented disposition is contained in a finite dimensional $1$-complemented subspace. Thus,
the space has property $\pi_1$.  We obtain now one of the fundamental structural results:

\begin{teor}\label{garbfdd} Every space of almost universal complemented disposition with skeleton contains
isometric $1$-complemented copies of every space with a skeleton.\end{teor}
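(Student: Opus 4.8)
The plan is to build an isometric, $1$-complemented embedding of an arbitrary space $W$ with skeleton into $E$ by an exhaustion-and-amalgamation argument, transferring the approximate liftings guaranteed by almost universal complemented disposition into honest $(1,0,1)$-arrows via the Approximation Lemma \ref{compfelix}, and then passing to a limit. First I would fix a skeleton $(W_n)$ of $W$ with isometric inclusions and norm-one projections, so each inclusion $W_n\hookrightarrow W_{n+1}$ is a genuine double arrow $(i_n,\overline{i_n}):W_n\leftrightarrows W_{n+1}$ between finite-dimensional spaces. The strategy is to inductively construct double arrows (exact $(1,0,1)$-arrows, obtained \emph{after} correction, not merely the $(1+\e,\e,1)$-arrows that a.u.c.d.\ hands out directly) $(J_n,\overline{J_n}):W_n\leftrightarrows E$ that are compatible with the skeleton inclusions up to a summable error, and then take the limit of the $J_n$ to get an isometry $W\to E$ whose projections $\overline{J_n}$ assemble into a norm-one projection of $E$ onto the image.

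The inductive step runs as follows. Given the honest double arrow $(J_n,\overline{J_n}):W_n\leftrightarrows E$, I would feed the finite-dimensional double arrow $(i_n,\overline{i_n}):W_n\leftrightarrows W_{n+1}$ together with $(J_n,\overline{J_n}):W_n\leftrightarrows E$ into the a.u.c.d.\ definition with a small tolerance $\e_n$; this produces a $(1+\e_n,\e_n,1)$-arrow $(K_{n+1},\overline{K_{n+1}}):W_{n+1}\leftrightarrows E$ making the defining diagram commute, so in particular $K_{n+1}i_n=J_n$ and $\overline{i_n}\,\overline{K_{n+1}}=\overline{J_n}$ exactly. Since $E$ has a skeleton and $W_{n+1}$ is finite dimensional, I then apply the Approximation Lemma \ref{compfelix} to $(K_{n+1},\overline{K_{n+1}})$ to replace it by a true $(1,0,1)$-arrow $(J_{n+1},\overline{J_{n+1}}):W_{n+1}\leftrightarrows E$ lying within $72\e_n$ of it. Choosing $\e_n$ decreasing with $\sum_n\e_n<\infty$ forces the compatibility defects $\|J_{n+1}i_n-J_n\|$ and $\|\overline{i_n}\,\overline{J_{n+1}}-\overline{J_n}\|$ to be summable, which is exactly what a telescoping/Cauchy argument needs.

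For the limit, I would observe that on the dense union $\bigcup_n W_n$ the maps $J_n$ agree up to summable perturbations, so $J:=\lim_n J_n$ is well defined, and being a limit of isometries that are correct on $W_n$ it is an isometric embedding of all of $W$. Dually, the projections $\overline{J_n}$ compose with the skeleton projections of $W$ to give a norm-one projection $P:=\lim_n J_n\overline{J_n}$ of $E$ onto $J(W)$ (one checks $P$ is idempotent of norm one and has range $J(W)$ by passing to the limit in the finite-dimensional identities). Care is needed because the compatibility identities are only approximate after correction, not exact; the standard remedy is the same ultraperturbation/Cauchy bookkeeping used at the end of the proof of Lemma \ref{compfelix}, keeping the accumulated error below a prescribed bound and absorbing it in the limit.

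The main obstacle I expect is precisely this reconciliation of \emph{exactness} with \emph{approximation}: the a.u.c.d.\ property only yields approximate commutativity and the Approximation Lemma restores exactness of a \emph{single} arrow but not of its compatibility with the previous stage, so at each step the new $(J_{n+1},\overline{J_{n+1}})$ is only \emph{close} to extending $(J_n,\overline{J_n})$ rather than genuinely extending it. Controlling how these small defects propagate—ensuring that the limiting $J$ is a strict isometry and that $P$ is a strict norm-one idempotent rather than merely an almost-projection—is where the delicate estimate work lies, and it is handled by the summability of the $\e_n$ together with the closedness of isometries and norm-one projections under the resulting convergence.
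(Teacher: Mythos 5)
Your inductive step is sound as far as the \emph{embedding} is concerned: applying the a.u.c.d.\ definition to $(i_n,\overline{i_n}):W_n\leftrightarrows W_{n+1}$ and $(J_n,\overline{J_n}):W_n\leftrightarrows E$, and then the Approximation Lemma \ref{compfelix}, does yield $(1,0,1)$-arrows with $\|J_{n+1}i_n-J_n\|\leq 72\e_n$ and $\|\overline{i_n}\,\overline{J_{n+1}}-\overline{J_n}\|\leq 72\e_n$, and the limit $J=\lim J_n$ is an isometric embedding of $W$. The genuine gap is the projection. The only compatibility you have established is the \emph{nested} one, $\overline{i_n}\,\overline{J_{n+1}}\approx\overline{J_n}$, equivalently $P_nP_{n+1}\approx P_n\approx P_{n+1}P_n$ for $P_n:=J_n\overline{J_n}$; this does not imply that $\lim_n P_n(e)$ exists for every $e\in E$. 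Indeed, consecutive projections are never close in operator norm: $P_{n+1}-P_n$ is, up to $O(\e_n)$, the operator $J_{n+1}(1_{W_{n+1}}-i_n\overline{i_n})\overline{J_{n+1}}$, which acts as the identity on $J_{n+1}(\ker\overline{i_n})$, so $\|P_{n+1}-P_n\|\geq 1-O(\e_n)$ whenever $W_n\subsetneq W_{n+1}$. And pointwise convergence can genuinely fail under exact nested compatibility: in $E=c$ take $W_n=\{x\in c: x_k=0 \mbox{ for } k>n\}$ and $P_n(x)=(x_1,\dots,x_n,0,\dots)$; these are norm-one projections with $P_nP_m=P_{\min\{m,n\}}$, yet $P_n(\mathbf{1})$ diverges since $\|P_{n+1}\mathbf{1}-P_n\mathbf{1}\|=1$. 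Your sequence $P_n$ converges (approximately) to the identity on $J(W)$, but nothing forces it to converge off $J(W)$. The appeal to the ``ultraperturbation/Cauchy bookkeeping'' of Lemma \ref{compfelix} does not rescue this: there the consecutive arrows share the same finite-dimensional domain and are norm-close, which is precisely what fails here.

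This is exactly the difficulty the paper's proof is engineered around, and it is resolved by anchoring the construction to a skeleton $(E_n,\imath_n,\overline{\imath_n})$ of $E$, not only to the skeleton of $W$. At each step the paper (i) perturbs the arrow $Y_{n+1}\leftrightarrows E$ (Lemma \ref{close}) so that its image lies inside a finite-dimensional skeleton piece $E_{n+1}$ of $E$; (ii) forms the push-out $P_{n+1}$ of $Y_n\leftrightarrows Y_{n+1}$ and $Y_n\leftrightarrows E_n$ (Lemma \ref{amostdpo}), which gives \emph{exact} two-sided compatibility of the new projection with the old one on $E_n$; and (iii) applies the a.u.c.d.\ property not to $Y_n\leftrightarrows Y_{n+1}$ but to the finite-dimensional arrow $E_n\leftrightarrows P_{n+1}$ together with the \emph{skeleton} arrow $(\imath_n,\overline{\imath_n}):E_n\leftrightarrows E$, so that the resulting projection is tied to the skeleton projections via $\overline{\delta'_n}\,\overline{\jmath_n}=\overline{\imath_n}$. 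The limit projection is then defined as $\overline f(e)=\lim_n f_n\overline{f_n}\,\overline{\imath_n}(e)$, and its convergence is driven by two ingredients your argument lacks: the pointwise convergence $\overline{\imath_n}(e)\to e$ supplied by the $1$-FDD of $E$, and norm estimates $\|\overline{f_{n+1}}{}_{|E_n}-\overline{f_n}\|\lesssim\e_n$ between projections defined on the finite-dimensional pieces $E_n$. In short: without routing the projections through the skeleton of $E$, the limit projection cannot be assembled, and that is where your proof breaks down.
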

\begin{proof}  Assume $E$ is a space of almost universal complemented disposition with a $1$-FDD $(E_n)$ having canonical $(1, 0, 1)$-arrows
$(\imath_n, \overline \imath_n):E_n\leftrightarrows E$, and let $Y$ be a space with a skeleton defined by the
sequence of $(1,0,1)$-arrows $(\delta_n, \overline \delta_n): Y_n\leftrightarrows Y_{n+1}$. Assuming without loss of generality that both $Y_0$ and $E_0$ are  of dimension $1$, pick a $(1,0,1)$-arrow $(f_0, \overline f_0): Y_0 \leftrightarrows E_0$. Fix $\varepsilon = \sum \varepsilon_n$ with $(1+\e_{n+1})^2\leq 1+ \e_{n}$.

\begin{itemize}
\item Form first the push-out diagram as in Lemma \ref{amostdpo}:
$$\xymatrix{
Y_0  \ar[r]\ar@<-1ex>[d]_{(f_0,\overline{f_0})}& Y_1\ar@<-1ex>[l]_{(\delta_0,\overline{\delta_0})}  \ar[d]\\
E_0  \ar@<-1ex>[r]_{(\delta'_0,\overline{\delta'_0})} \ar[u]& P_1\ar[l]\ar@<-1ex>[u]_{(f_0',\overline{f_0'})}}$$
which yields $(1,0,1)$-arrows $(\delta_0', \overline{\delta_0'})$ and  $(f_0', \overline{f_0'})$  making the diagram $\varepsilon_1$-commutative (in fact, it is commutative in both directions; i.e., $\delta_0'f_0=f_0'\delta_0$ and $\overline \delta_0 \overline {f'_0} = \overline
f_0 \overline{ \delta'_0}$).\medskip

\item \emph{Inductive step.} Assume that one has obtained an $\varepsilon_n$-commutative diagram

$$\xymatrix{Y_n  \ar[r] \ar@<-1ex>[d]_{(f_n,\overline{f_n})} & Y_{n+1}\ar[d]\ar@<-1ex>[l]_{(\delta_n,\overline{\delta_n})} \\
E_n \ar@<-1ex>[r]_{(\delta'_n,\overline{\delta'_n})} \ar[u]& P_{n+1} \ar[l]\ar@<-1ex>[u]_{(f_n',\overline{f_n'})}}$$ in which $(f_n, \overline {f_n})$ is a contractive $(1+\varepsilon_{n}, 0,  1+\varepsilon_{n})$-arrow, $(\delta_n, \overline{\delta_n})$ and $(\delta'_n,\overline{\delta'_n})$ are $(1,0,1)$-arrows and $(f_n',\overline{f_n'})$ is a contractive $(1+\varepsilon_{n+1}, 0,  1+\varepsilon_{n+1})$-arrow

The a.u.c.d. disposition character of $E$ yields a $(1 + \varepsilon_{n+1}, \varepsilon_{n+1}, 1)$-arrow $(\jmath_n, \overline {\jmath_n}): P_{n+1} \leftrightarrows E$ making a commutative diagram

$$\xymatrix{E_n \ar@<-2ex>[d]_{(\imath_n, \overline{\imath_n})} \ar@<-1ex>[r]_{(\delta'_n,\overline{\delta'_n})} & P_{n+1} \ar@<1.5ex>[ld] \ar[l]\\
E \ar@<0.5ex>[u]  \ar@<-2.5ex>[ru]_{(\jmath_n, \overline {\jmath_n})}}$$

Thus, $(\jmath_n f_n',\; \overline{f_n'}\; \overline{\jmath_n})$ is a $((1 + \varepsilon_{n+1})^2, \varepsilon_{n+1}, 1 + \varepsilon_{n})$-arrow. The approximation lemma \ref{compfelix} yields a $(1,0,1)$- arrow $(g_{n+1}, \overline {g_{n+1}}): Y_{n+1}\leftrightarrows E$ at distance $72\varepsilon_{n}$ of $(\jmath_n f_n',\; \overline{f_n'}\; \overline{\jmath_n})$.

A small perturbation $\tau_{n+1}$, as in Lemma \ref{close}, of $(g_{n+1}, \overline {g_{n+1}})$ yields a $(1+\varepsilon_{n+2}, 0, 1+\varepsilon_{n+2})$ arrow $(f_{n+1}, \overline {f_{n+1}}): Y_{n+1}\leftrightarrows E_{n+1}$ (in which we set $f_{n+1}= \tau_{n+1}g_{n+1}$ and assume that the large $E_\alpha$ is $E_{n+1}$ of course) at a distance $\e_{n+2}$. Form the push-out to get a commutative diagram

\begin{equation}\label{casicasi}\xymatrix{Y_{n+1}  \ar[r] \ar@<-1ex>[d]_{(f_{n+1},\overline{f_{n+1}})} & Y_{n+2}\ar[d]\ar@<-1ex>[l]_{(\delta_{n+1},\overline{\delta_{n+1}})} \\
E_{n+1} \ar@<-1ex>[r]_{(\delta_{n+1}',\overline{\delta_{n+1}'})} \ar[u]& P_{n+2} \ar[l]\ar@<-1ex>[u]_{(f_{n+1}',\overline{f_{n+1}'})}}\end{equation}

According to diagram (\ref{general}) one gets that $(\delta'_{n+1},\overline{\delta'_{n+1}})$ and $(f_{n+1}',\overline{f_n{n+1}'})$ are both contractive $(1+\varepsilon_{n+2}, 0, 1+\varepsilon_{n+2})$ arrows. Induction will be over if the left downward arrow would be contractive. To make it so, we replace $(f_{n+1}, \overline {f_{n+1}})$ by $(\frac{1}{1+\varepsilon_{n+2}}f_{n+1}, (1+\varepsilon_{n+2})\overline{f_{n+1}})$, which is a contractive $((1+\varepsilon_{n+2})^2, 0, (1+\varepsilon_{n+2})^2)$-arrow (in particular a  contractive $(1+\varepsilon_{n+1}, 0, 1+\varepsilon_{n+1})$-arrow) and yields
an $\e_{n+1}$-commutative diagram
\begin{equation}\label{casicasi}\xymatrix{Y_{n+1}  \ar[r] \ar@<-1ex>[d]_{(\frac{1}{1+\varepsilon_{n+2}}f_{n+1}, (1+\varepsilon_{n+2})\overline {f_{n+1}})} & Y_{n+2}\ar[d]\ar@<-1ex>[l]_{(\delta_{n+1},\overline{\delta_{n+1}})} \\
E_{n+1} \ar@<-1ex>[r]_{(\delta_{n+1}',\overline{\delta_{n+1}'})} \ar[u]& P_{n+2} \ar[l]\ar@<-1ex>[u]_{(f_{n+1}',\overline{f_{n+1}'})}}\end{equation}
This concludes the induction. \end{itemize}

Relabel the left downwards arrow as $(f_{n+1}, \overline {f_{n+1}})$ to simplify notation. After this relabeling, we define now the $(1,0,1)$ arrow $(f, \overline f): Y \leftrightarrows E$ we are looking for. Given $y\in Y$ so that $y = \lim_n y_n$ with $y_n\in Y_n$ and $\sum \|y_{n+1} - y_n\|<+\infty$ then we set $$f(y) =
\lim_n f_n(y_n).$$ Since $f_n$ is a $(1+\varepsilon_n)$-isometric embedding, whenever $\lim y_n=0$ then $\lim
f_n(y_n)=0$ and thus $f(y)$ does not depend on the choice of the sequence. To check that $f$ is well defined observe that
\begin{eqnarray*}
\|{f_n}_{|Y_{n-1}} - f_{n-1}\| &=& \|{\tau_n g_n}_{|Y_{n-1}} - f_{n-1}\|\\
&\leq&\|(\tau_n g_n - g_n)_{|Y_{n-1}}\| + \|{g_n}_{|Y_{n-1}} - f_{n-1} \|\\
&\leq&\varepsilon_{n+1} + 72\varepsilon_{n-1} + \|{f_{n-1}'}_{|Y_{n-1}}- f_{n-1}  \|\\
&\leq& \varepsilon_{n+1} + 72\varepsilon_{n-1} + \varepsilon_{n-1}\end{eqnarray*}

and thus, with the proper choice of $(\varepsilon_n)$ the sequence
$(f_n(y_n))$ is Cauchy
\begin{eqnarray*}
\|f_{n+1}y_{n+1} - f_ny_n\|&=&\|f_{n+1}y_{n+1} - f_{n+1}y_n + f_{n+1}y_n - f_ny_n\|\\
&\leq &\|y_{n+1} - y_n\| + (\varepsilon_{n+1} + 73\varepsilon_{n-1})\|y_n\|.
\end{eqnarray*}

The map $f$ is quite clearly an isometric embedding. We define the projection $\overline f$ as follows $$\overline f(e) = \lim_n f_n \overline f_n
\overline \imath_n (e).$$
The operator $\overline f$ is well defined: if  $e=\lim e_n$ with $e_n\in E_n$ and $\sum \|e_{n+1} - e_n\|<+\infty$ then
$\overline f(e) = \lim_n f_n \overline f_n (e_n).$ Observe that   (with a slight abuse of notation)
\begin{eqnarray*}\label{pcauchy}
\|{\overline f_{n+1}}_{|E_n}  - \overline f_{n}\|  &\leq&
\|{\overline f_{n+1}} - \overline{g_{n +1}}\| + \|\overline{g_{n +1}} - \overline f'_n \| + \|\overline f'_n - \overline f_n\|\\
&\leq& \varepsilon_{n+2} + 72\varepsilon_{n+1} + 2\varepsilon_{n}
\end{eqnarray*}
and thus one gets
\begin{eqnarray*}
\| f_{n+1} \overline f_{n+1}  (e_{n+1})  - f_n \overline f_n( e_n)\| &\leq & \| f_{n+1} \overline f_{n+1}( e_{n+1}) - f_{n+1} \overline f_{n+1} (e_n)\| \\
&\quad& + \| f_{n+1} \overline f_{n+1} e_n - f_n \overline f_n e_n\|\\
 &\leq& (1+\varepsilon_{n+1})^2\| e_{n+1} - e_n\| +\\
 &\quad&  + \|f_{n+1}\| \| \overline f_{n+1}(e_n) -  \overline f_n(e_n)\|\\
 &\quad&  +  \| {f_{n+1}}_{|E_n}-  f_n \| \|\overline f_n\|\|e_n\|\\
 &\leq& (1+\varepsilon_{n+1})^2\| e_{n+1} - e_n\|\\
 &\quad& + (1+\varepsilon_{n+1})\left(\varepsilon_{n+2} + 72\varepsilon_{n+1} + 2\varepsilon_{n}\right)\|e_n\|\\
 &\quad& +  \left( \varepsilon_{n+1} + 72\varepsilon_{n} + 2\varepsilon_{n-1}\right)(1+\varepsilon_{n})\|e_n\|
 \end{eqnarray*}
and thus $(f_n \overline f_n (e_n))$ is a Cauchy sequence.\medskip

It  remains to prove that for $y= \lim y_m \in Y$
$$\overline f (f(y)) =\lim_n f_n \overline f_n \overline \imath_n (\lim_m f_m y_m)) = \lim f_m y_m  $$
as it immediately follows form the estimate:
\begin{eqnarray*}
\|\overline f_n(f_{m}(y_{m})) -  y_{m}\| &\leq& \|\overline f_n(f_{m}(y_{m})) -  \overline f_n(f_{n}(y_{n}))\| + \|y_n -
y_m\|\\
&\leq & (1+ \varepsilon_n)\|f_{m}y_{m} - f_{n}y_{n}\| + \|y_n - y_m\|.
\end{eqnarray*}
It is then clear that $\overline f$ is a norm one projection.\end{proof}

In addition to the statement of Theorem \ref{garbfdd}, since $C[0,1]$ contains isometric copies of every separable Banach space and has skeleton, one gets:

\begin{cor}\label{cuniv} Every space of almost universal complemented disposition with skeleton contains isometric copies of every separable Banach space.
\end{cor}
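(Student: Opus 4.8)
The plan is to deduce the statement directly from Theorem \ref{garbfdd} by producing a single isometrically universal space that carries a skeleton, and then to compose embeddings. The natural candidate is $C[0,1]$, so the whole argument reduces to two classical facts about it together with one application of the theorem just proved.

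First I would check that $C[0,1]$ admits a skeleton in the sense of Section \ref{skeleton}. The Faber--Schauder system is a \emph{monotone} Schauder basis of $C[0,1]$, so the partial-sum projections onto $C_n = [e_1,\dots,e_n]$ all have norm one. Setting $E_n = C_n$, each $E_n$ is the range of a norm-one projection of $C[0,1]$ and is therefore $1$-complemented in $E_{n+1}$, while $C[0,1] = \overline{\bigcup_n C_n}$. Hence $(C_n)$ is a skeleton. Second, by the Banach--Mazur theorem $C[0,1]$ is isometrically universal for the class of separable Banach spaces, i.e.\ every separable $X$ admits an into isometry $v : X \to C[0,1]$.

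The decisive step is then to apply Theorem \ref{garbfdd} with $Y = C[0,1]$. Since $E$ is of almost universal complemented disposition with skeleton and $C[0,1]$ has a skeleton, $E$ contains an isometric (indeed $1$-complemented) copy of $C[0,1]$; write $u : C[0,1] \to E$ for the corresponding into isometry. Given an arbitrary separable Banach space $X$ with Banach--Mazur isometry $v : X \to C[0,1]$, the composition $u \circ v : X \to E$ is again an into isometry, producing an isometric copy of $X$ inside $E$ and completing the proof.

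I expect no serious obstacle here: once $C[0,1]$ is recognized as a space with skeleton, the corollary is a one-line composition of the two embeddings. The only point meriting a word of care is precisely the verification that the skeleton hypothesis of Theorem \ref{garbfdd} applies to $C[0,1]$, which is exactly what the monotonicity of the Faber--Schauder basis supplies; the $1$-complementation of the copy of $C[0,1]$ furnished by the theorem is in fact more than we need for the purely isometric conclusion.
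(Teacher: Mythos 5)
Your proof is correct and follows exactly the paper's own route: the paper's argument for Corollary \ref{cuniv} is precisely that $C[0,1]$ has a skeleton (a monotone basis) and is isometrically universal for separable spaces by Banach--Mazur, so Theorem \ref{garbfdd} supplies an isometric copy of $C[0,1]$ inside $E$ and one composes embeddings. Your extra care in verifying the skeleton via the Faber--Schauder partial-sum projections is exactly the implicit step the paper takes for granted.
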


\section{Digression on Banach spaces of almost universal disposition}

Recall (see e.g., \cite{accgm2,accgmLN}) that a Banach space $E$ is said to be of \emph{almost universal disposition} if for every into isometry $i: F \to G$ between finite dimensional spaces, every into isometry $j: F \to  E$ and every $\varepsilon >0$ there exists a $(1+\varepsilon)$-isometry $J: G \to E$ making a commutative diagram

$$\xymatrix{
 F \ar[rr]^{i} \ar[rd]_j&& G \ar[ld]^J\\
&E }$$
Let us show that the approximation lemma remains true in this context.

\begin{lema}\label{felix} If $E$ is a Banach space of almost universal disposition then every contractive $(1+\varepsilon)$-isometry $f: F\to E$ with $F$ finite dimensional admits an isometry $\phi: F\to E$ at distance $3\varepsilon$.\end{lema}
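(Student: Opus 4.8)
The plan is to prove the statement by a \emph{one-step improvement} that is then iterated to pass to an exact isometry in the limit. Since the hypothesis already supplies a contractive $(1+\varepsilon)$-isometry, write $F'=f(F)\subseteq E$ and let $\kappa:F'\to E$ be the (isometric) inclusion; note that $\|f\|\le 1$ and $\|f^{-1}\|\le 1+\varepsilon$. The heart of the argument is to manufacture a finite-dimensional superspace into which both $F$ and $F'$ embed \emph{isometrically} and almost on top of each other, and then to use the almost universal disposition of $E$ to fold this superspace back onto $E$ so as to correct $f$.

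\emph{Amalgamation step (the crux).} I would build a finite-dimensional space $G$ on the vector space $F\oplus F'$ carrying two \emph{exact} isometric embeddings $i:F\to G$, $i(x)=(x,0)$, and $k:F'\to G$, $k(y)=(0,y)$, with $\|i-kf\|\le\varepsilon$. The clean way is to prescribe the dual ball: let $A=\{(\xi,\eta)\in F^*\oplus F'^*:\|\xi\|\le1,\ \|\eta\|\le1,\ \|\xi-f^*\eta\|\le\varepsilon\}$ and put $\|(u,v)\|_G=\sup_{(\xi,\eta)\in A}(\xi(u)+\eta(v))$. Boundedness and the fact that $A$ is a neighbourhood of the origin make this a genuine norm. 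The delicate point --- and the main obstacle --- is checking simultaneous exactness: $\|(x,0)\|_G=\|x\|$ follows by pairing a norming $\xi$ for $x$ with the \emph{normalized} functional $\eta=(f^{-1})^*\xi/\|(f^{-1})^*\xi\|$, where the estimate $\|f^{-1}\|\le1+\varepsilon$ is exactly what keeps $\|\xi-f^*\eta\|\le\varepsilon$; dually $\|(0,y)\|_G=\|y\|$ by pairing a norming $\eta$ for $y$ with $\xi=f^*\eta$, which is admissible because $\|f\|\le1$; and $\|(x,-f(x))\|_G=\sup_A(\xi-f^*\eta)(x)\le\varepsilon\|x\|$ is immediate. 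This amalgam is the single-arrow analogue of the correction Lemma \ref{compgarbu}.

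\emph{Disposition and estimate.} Now $k:F'\to G$ and $\kappa:F'\to E$ are both into isometries, so the almost universal disposition of $E$ produces, for any $\delta>0$, a $(1+\delta)$-isometry $J:G\to E$ with $Jk=\kappa$. Set $\phi_1=Ji:F\to E$. Because $i$ is an \emph{exact} isometry, $\phi_1$ is a $(1+\delta)$-isometry; and since $f=\kappa f=Jkf$ we get $\phi_1-f=J(i-kf)$, whence $\|\phi_1-f\|\le(1+\delta)\varepsilon$. The essential gain is that the isometry defect of $\phi_1$ is that of $J$, namely $\delta$, which we are free to choose as small as we please --- this is precisely why exactness of $i$ (rather than of $k$ alone) matters, as it removes the error floor that a merely almost-isometric $i$ would impose.

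\emph{Iteration to exactness.} Rescaling $\phi_1$ to a contractive $(1+\delta)$-isometry (simply a scalar multiple) and repeating the construction, I obtain a sequence of contractive $(1+\delta_k)$-isometries $f_k:F\to E$ with $\delta_k\downarrow 0$ and $\|f_{k+1}-f_k\|\le(1+\delta_k)\delta_{k-1}+\delta_k$, the amalgam error at each stage now being the \emph{current} defect $\delta_{k-1}$ rather than a fixed floor. Choosing $\delta_1$ and the tail so that $\sum_{k\ge1}\delta_k$ is small, the sequence $(f_k)$ is Cauchy in operator norm (the domain $F$ is finite-dimensional), and since $\delta_k\to0$ its limit $\phi$ satisfies $\|\phi(x)\|=\|x\|$ for all $x$, i.e. $\phi$ is an exact isometry. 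Summing the displacements gives $\|\phi-f\|\le(1+\delta_1)\varepsilon+\sum_{k\ge2}\big((1+\delta_k)\delta_{k-1}+\delta_k\big)$, which is at most $3\varepsilon$ for a suitable choice of the $\delta_k$. The two places needing care are the simultaneous-exactness verification of the amalgam and the bookkeeping that keeps the total displacement within $3\varepsilon$.
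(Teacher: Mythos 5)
Your proposal is correct and follows essentially the same route as the paper's proof: build a finite-dimensional amalgam $G$ in which both $F$ and $f(F)$ embed isometrically with $\|i-kf\|\le\varepsilon$, use the almost universal disposition of $E$ to extend the inclusion of $f(F)$ to a $(1+\delta)$-isometry on $G$ (so that $Ji$ corrects $f$ up to $(1+\delta)\varepsilon$ while its isometry defect drops to $\delta$), and iterate to a Cauchy limit that is an exact isometry within $3\varepsilon$ of $f$. The only difference is in how the amalgam is obtained: the paper invokes its push-out based Correction Lemma \ref{garblem}, whereas you construct the same object directly by prescribing its dual ball, and your verification (contractivity $\|f\|\le 1$ making $\xi=f^*\eta$ admissible, and $\|f^{-1}\|\le 1+\varepsilon$ keeping the normalized $(f^{-1})^*\xi$ within the $\varepsilon$-constraint) is sound.
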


\begin{proof} Let $f: F\to E$ be contractive a $(1+\varepsilon)$-isometry from a finite-dimensional space $F$ into a space $E$ of almost universal disposition. We apply the Correction Lemma
\ref{garblem} to the couple $f: F\to f(F)$ to find another  space $G = G(f, F, f(F))$ and two isometries $i_F:
F\to G$ and $j_F: f(F)\to G$ such that $\|j_Ff - i_F\|\leq \varepsilon$. If  we call $\delta: f(F)\to E$ the
canonical inclusion, the almost universal disposition property of $E$ provides a $(1+\varepsilon')$-isometry $f_G: G\to E$
such that $f_G j_F = \delta$. Hence $f_G i_F: F\to E$ is a
$(1+\varepsilon')$-isometry such that$$\|f_G i _F - \delta  f\| = \|f_G (i _F - j_Ff + j_Ff ) -
\delta  f \|\leq
\varepsilon\|f_G\| + \|f_G j_Ff - \delta  f \| = \varepsilon(1+\varepsilon').$$ Thus, every $(1+\varepsilon)$-isometry $f$ admits a
$(1+\varepsilon/2)$-isometry $f_1$ $2\varepsilon$-close. So $f_1$ admits a $(1+\varepsilon/4)$-isometry $f_2$ at distance
at most $\varepsilon$ and we thus obtain a sequence $f_n$ of $(1 + \varepsilon/{2^n})$-isometries so that
$\|f_n - f_{n-1}\|\leq 3\varepsilon/{2^{n-1}}$. In particular, $(f_n(x))$ is a Cauchy sequence for every $x\in F$.
The map $\phi : F\to E$ given by
$$\phi(x) =  \lim f_n(x)$$ is an into isometry and $\|f - \phi\|\leq 3\varepsilon$.\end{proof}
F. Cabello  suggested to us that Lemma \ref{felix} could be true. An immediate (and well-known) consequence of Lemma \ref{felix} is:

\begin{cor}\label{thesemiedge} Every space of almost universal disposition contains isometric copies of all separable spaces.
\end{cor}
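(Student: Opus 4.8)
The plan is to exhaust a given separable space $X$ by an increasing sequence of finite-dimensional subspaces $F_1\subseteq F_2\subseteq\cdots$ with $X=\overline{\bigcup_n F_n}$, and to build a sequence of \emph{exact} into isometries $g_n\colon F_n\to E$ which almost extend one another, in the sense that $\|g_{n+1}|_{F_n}-g_n\|$ is summable, and then to let $g_n$ converge to an into isometry $g\colon X\to E$. I fix at the outset a sequence $\varepsilon_n>0$ with $\sum_n\varepsilon_n<\infty$.

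For the inductive construction I start from $F_0=\{0\}$ and the trivial isometry $g_0$, so that no separate base case is needed. Given an exact into isometry $g_n\colon F_n\to E$, I apply the almost universal disposition of $E$ to the inclusion $i\colon F_n\hookrightarrow F_{n+1}$ and to $j=g_n\colon F_n\to E$: this produces a $(1+\varepsilon_{n+1})$-isometry $J\colon F_{n+1}\to E$ with $Ji=g_n$, that is, $J|_{F_n}=g_n$ exactly. The map $J$ is only an almost isometry, so after rescaling it to a contractive almost isometry $J/(1+\varepsilon_{n+1})$ I invoke the Approximation Lemma \ref{felix} to replace it by a genuine into isometry $g_{n+1}\colon F_{n+1}\to E$ lying within $O(\varepsilon_{n+1})$ of $J$. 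Since $J$ restricts to $g_n$ on $F_n$, this yields $\|g_{n+1}|_{F_n}-g_n\|\leq C\varepsilon_{n+1}$ for an absolute constant $C$, which is precisely the compatibility I want.

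To pass to the limit, I note that for a fixed $x\in\bigcup_n F_n$, say $x\in F_N$, the estimate above gives $\|g_{n+1}(x)-g_n(x)\|\leq C\varepsilon_{n+1}\|x\|$ for all $n\geq N$, so $(g_n(x))_n$ is Cauchy; I define $g(x)=\lim_n g_n(x)$. The map $g$ is linear on the dense subspace $\bigcup_n F_n$, and since each $g_n$ with $n\geq N$ is an \emph{exact} isometry one has $\|g(x)\|=\lim_n\|g_n(x)\|=\|x\|$; hence $g$ is an into isometry on $\bigcup_n F_n$ and extends by continuity to an into isometry $g\colon X\to E$, which is what is required.

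The delicate point, and the reason the Approximation Lemma is indispensable, is that the extension furnished by almost universal disposition only accepts \emph{exact} isometries as input while returning an almost isometry; if one simply iterated the raw extensions, the defect $\varepsilon_n$ would feed back into the next step and there would be no control on the limiting map. Correcting each almost isometry back to an exact one at the cost of only an $O(\varepsilon_n)$ perturbation both preserves the hypotheses of the construction at every stage and guarantees that the limit is an exact, rather than merely almost, isometry.
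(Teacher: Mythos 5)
Your proof is correct and follows essentially the same route as the paper's: exhaust $X$ by finite-dimensional subspaces, use almost universal disposition to extend exactly-isometric maps to almost-isometric ones, correct each back to an exact isometry via Lemma \ref{felix} at summable cost, and pass to the limit on the dense union. The only cosmetic differences are your use of a general summable sequence $(\varepsilon_n)$ in place of the paper's $\varepsilon/2^n$ and your explicit rescaling to a contractive almost isometry before invoking Lemma \ref{felix}, a point the paper glosses over.
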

\begin{proof} Let $X$ be a separable space, which we write as the closure of the union $X=\overline{ \bigcup_{n=0}^\infty X_n}$ of
a sequence of finite-dimensional spaces $X_n$. Assume that $X_0$ is one dimensional. Let $E$ be  a space of almost universal
disposition and thus, fixing $\varepsilon>0$, any isometric embedding $f_0: X_0\to E$ can be extended to a
$(1+\varepsilon)$-isometry $f_0': X_1\to E$ which, by Lemma \ref{felix}, admits an isometric embedding $f_1:X_1\to E$ at distance
$2\varepsilon$. Now it is $f_1$ which admits a $(1+\varepsilon/2)$-isometric extension $f_1':X_2\to E$ which, by the lemma, admits an
isometric embedding $f_2:X_2\to E$ at distance $\varepsilon$. Continue in this way and define $f:\bigcup_n X_n\to E$ as
$f(x)=\lim f_n(x)$. This is an isometric embedding that extends to an isometric embedding $X\to E$ as desired. \end{proof}

The assertion in Corollary \ref{thesemiedge}  was proved by Gurariy \cite{gurari} for his space and by Gevorkyan \cite{gevor} in full generality.

\section{Construction of separable spaces of almost universal complemented disposition}

We show now that the basic construction device as presented in \cite{accgm2,accgmLN,castsua}, and used in \cite{cgk}, that provided a
unified method to construct spaces of (almost) universal disposition, such as the Gurariy, Kubi\'s or the $\mathcal
L_\infty$-envelopes, can be adapted to construct separable spaces of almost universal complemented disposition.

\begin{teor}\label{KX} Every Banach space $X$ with separable dual can be isometrically embedded as a 1-complemented subspace of
a separable space $\mathscr K(X)$ of almost universal complemented disposition.
\end{teor}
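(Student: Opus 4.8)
The plan is to construct the space $\mathscr K(X)$ as an $\omega$-times iterated push-out along a cofinal countable family of double arrows between finite-dimensional spaces, mimicking the construction of the Gurariy space in \cite{accgm2} but carried out in the category of double arrows rather than single into-isometries. First I would fix a dense countable set $\mathcal D$ of double arrows $(i,\overline i):F\leftrightarrows G$ between finite-dimensional rational Banach spaces, together with a bookkeeping that enumerates, at each stage, every pair consisting of such a double arrow and a double arrow $(j,\overline j):F\leftrightarrows E_n$ into the current space. The construction should start from $E_0=X$ (which has separable dual, hence a skeleton one can work with) and, at stage $n$, amalgamate the listed data via the almost-double-arrow push-out of Lemma~\ref{amostdpo} to produce $E_{n+1}$ into which $E_n$ embeds as a $1$-complemented subspace through a canonical double arrow $(\imath_n,\overline\imath_n):E_n\leftrightarrows E_{n+1}$. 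Setting $\mathscr K(X)=\overline{\bigcup_n E_n}$ then gives a separable space with a skeleton, and the composition of the $\imath_n$ exhibits $X$ itself as a $1$-complemented subspace.

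The heart of the argument is the verification that $\mathscr K(X)$ has almost universal complemented disposition. Given a double arrow $(i,\overline i):F\leftrightarrows G$ between finite-dimensional spaces, a double arrow $(j,\overline j):F\leftrightarrows \mathscr K(X)$ and $\varepsilon>0$, I would first approximate $(i,\overline i)$ by a rational double arrow in $\mathcal D$ and push $j(F)$ into some $E_n$ with a controlled perturbation, reducing the problem to data living at a finite stage. By the cofinality of the bookkeeping, the relevant amalgamation was performed at some later stage $m$, so the push-out construction supplies the desired extension $(J,\overline J):G\leftrightarrows E_m\subset\mathscr K(X)$ with the norm and commutativity constants degraded only by a factor absorbed into $\varepsilon$. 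The resulting arrow is a $(1+\varepsilon,\varepsilon,1+\varepsilon)$-double arrow making the diagram $\varepsilon$-commute, which by Lemma~\ref{equiv} is exactly the required a.u.c.d.\ condition.

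The main obstacle, and the step that needs the most care, is the \emph{control of the projections} through the iteration. Unlike the Gurariy construction, where one only tracks into-isometries, here every stage carries the backward maps $\overline i,\overline j,\overline J$, and the push-out of Lemma~\ref{amostdpo} only yields the projection part up to a small but nonzero error. One must therefore choose the error budget $(\varepsilon_n)$ with $\sum\varepsilon_n<\infty$ at the outset and check that the accumulated perturbations of both the inclusions and the projections remain summable, so that the limiting maps genuinely define a double arrow into $\mathscr K(X)$ and not merely into an ultrapower. This is where the principle of local reflexivity and the ultraperturbation mechanism used in Lemma~\ref{compfelix} would reappear: after assembling the arrow in the inductive limit one may need to push it back into $\mathscr K(X)$ itself. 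I expect that, with the constants chosen as in the Approximation Lemma, each amalgamation at stage $n$ perturbs the pair by at most $O(\varepsilon_n)$ in both coordinates, so the telescoping estimate closes and the construction delivers a genuine separable a.u.c.d.\ space containing $X$ as a $1$-complemented subspace.
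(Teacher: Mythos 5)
Your strategy is essentially the paper's: an $\omega$-times iterated multiple push-out (Lemma \ref{amostdpo}, Lemma \ref{POprojection}) along a countable dense family of double arrows between finite-dimensional spaces (Lemma \ref{denseness}), with a perturbation step pushing the data into a finite stage, a bookkeeping/cofinality step producing the extension at a later stage, and a final ultraperturbation via local reflexivity (Lemma \ref{equiv}). However, two points need repair. The minor one: your parenthetical claim that $X$ ``has separable dual, hence a skeleton one can work with'' is false --- separable dual implies neither $1$-FDD nor even the BAP (there are subspaces of $c_0$ failing AP). Fortunately this claim is not load-bearing: the construction never uses a skeleton of $X$, and indeed the paper shows (Proposition \ref{noisomo}) that $\mathscr K(X)$ itself fails to have a skeleton when $X$ fails the BAP.

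The serious gap is in your bookkeeping, which ``enumerates, at each stage, every pair consisting of such a double arrow and a double arrow $(j,\overline j): F \leftrightarrows E_n$ into the current space.'' The set of double arrows $F \leftrightarrows E_n$ is uncountable in general, so either each amalgamation involves uncountably many arrows (and then $E_{n+1}$, hence $\overline{\bigcup_n E_n}$, need not be separable), or you restrict to a countable subfamily --- in which case your later appeal to ``cofinality of the bookkeeping'' (finding a previously enumerated arrow close to the perturbed arrow $F \leftrightarrows E_n$) requires a density property you never establish. This is precisely where the separable-dual hypothesis does its real work, and your proposal misattributes its role: the paper's final appendix lemma shows that the set of double arrows $F \leftrightarrows E$ admits a countable \emph{dense} subset if and only if $E^*$ is separable. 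One therefore takes countable dense sets $L_m(F_u,P_n)$ of contractive $(1+2^{-m},0,1+2^{-m})$-arrows as the bookkeeping, and one must also verify that dual separability propagates along the construction --- which it does because each quotient $P_{n+1}/P_n$ is finite dimensional. Without supplying this density mechanism, the verification of almost universal complemented disposition does not close.
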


\begin{proof} Let $\mathfrak U= \{(u, \overline u): F_u \leftrightarrows  G_u\}$ be a countable set of double arrows between finite
dimensional spaces as in Lemma \ref{denseness}. The space $F_u$ will be called the domain of $u$ and $G_u$ its codomain. We will call ${\mathrm dom} \mathfrak U$ the set of the domains of the elements $u$ so that $(u, \overline u)\in \mathfrak U$. For fixed $F_u\in {\mathrm dom} \mathfrak U $ and $X$ with separable dual, any subspace of $\mathfrak L(F_u, X) \oplus \mathfrak L(X, F_u)$ is separable. Thus, for $m\in \N$, let $L_m(F_u, X)$ be a countable dense subset of the space of all contractive $(1+2^{-m}, 0, 1+2^{-m})$-arrows $F_u \leftrightarrows X$. Form now the countable set

$$D(E) = \;\bigcup_{u\in \mathfrak U}  \;\bigcup_{m\in \N} L_m(F_u, E).$$

We start fixing an enumeration $\{d_{0,j} , j\in \N\}$ of  $D(X)$. To avoid ambiguities, the map $F_{d_{0,1}} \to G_{d_{0,1}}$, which should be
called $d_{0,1}$ by the general convention above, will be called $v_0$. Our first step is then to form the push-out
$$\begin{CD}
F_{d_{0,1}} @>{v_0}>> G_{d_{0,1}} \\
@V{d_{0,1}}VV @VV{{d_{0,1}}'}V\\
X@>>{u_0}> P_1
\end{CD}$$
in which $u_0$ is an isometric embedding by Lemma \ref{isom}. Observe that $P_1/X =  G_{d_{0,1}}/F_{d_{0,1}}$ is finite-dimensional.\medskip

Assume now that spaces $P_1, \dots, P_n$ having separable dual  have already been obtained together with into isometries $u_k: P_{k}\to P_{k+1}$ so that one can assume that $P_k$ is a subspace of $P_{k+1}$
and enumerations  $\{d_{k,j} , j\in \N\}$ of  $D(P_k)$ have also been fixed. Let us call $I_{n+1} = \{d_{i, j}: i+j \leq n+1\}$ and form the push-out

$$\begin{CD}
\ell_1(I_{n+1}, F_u)@>{\oplus u}>> \ell_1(I_{n+1}, G_u)\\
@V{\sum d}VV @VVV\\
P_n@>>{u_n}> P_{n+1}.
\end{CD}$$
in which $\oplus u$ is the natural (into isometry) amalgamation of the maps $F_u\to G_u$ that appear involved in $I_{n+1}$ and $\sum d$ is the (contractive) operator sum of the operators in $I_{n+1}$. This, again by Lemma \ref{isom}, makes $u_{n}$ an into isometry. Since $P_{n+1}/P_n= \ell_1(I_{n+1}, G_u)/\ell_1(I_n, F_u)$ is finite-dimensional, as well as $P_1/X$ and $X$ has separable dual, all $P_n$ have separable dual and the process can be actually performed and the space $\mathscr K(X) = \overline{\cup P_n}$ is separable. Let us show that:

\begin{itemize}
\item $\mathscr K(X)$ contains an isometric $1$-complemented copy of $X$.
\item $\mathscr K(X)$ is a space of almost universal complemented disposition.
\end{itemize}

The first part follows from the ``Moreover" part of lemma \ref{amostdpo}, which says that the $u_{n}:P_{n}\to P_{n+1}$ maps are actually part of certain $(1,0,1)$-arrows $(u_n, U_n)$; which means that each $P_n$ is $1$-complemented in $P_{n+1}$ and therefore $X$ is $1$-complemented in $\mathscr K(X)$. To prove the almost universal complemented disposition of $\mathscr K(X)$, fix $0<\varepsilon<1/3$ and consider a double arrow $(\delta, \overline \delta): F\leftrightarrows G$ between two finite dimensional spaces and a double arrow $(f, \overline f): F\leftrightarrows \mathscr K(X)$. Assume that the subspace $A= f(F)$ of $\mathscr K(X)$ is $n$-dimensional. $A$ is complemented by a projection $f\overline f$ of norm $C=1$. Pick $\{a_1,\dots, a_n\}$ a basis for $A$ so that $ \mathrm{dist} (A, \ell_1^n)^{-1} \sum |\lambda_i|\leq \|\sum \lambda_ia_i\|\leq
\sum|\lambda_i|$. Fix $k$ large enough so that  $2^{-k}< \varepsilon\mathrm{dist} (F, \ell_1^n)^{-1}$, there exist $x_i\in P_{n_k}$, $i\in \{1,\cdots,n\}$, such that $\|x_i-a_i\|\leq 2^{-k}\leq \frac{\varepsilon}{\mathrm{dist} (F, \ell_1^n)}$ and assume without loss of generality $P_k=P_{n_k}$.\medskip

By Lemma \ref{close}(1), the map $\tau a_i=x_i$ is a $(1+\varepsilon)$-isometry $A \to [x_i]$ and thus $\tau f :  F \to P_k$ is a $(1+\varepsilon)$-isometry with range $[x_i]$ and complemented via some projection $p'$ of norm
$\frac{1-\varepsilon^2}{1-3\varepsilon}$ for which $\|p' - \tau f\overline f\|\leq \frac{\varepsilon(1+\varepsilon)^2}{1 -
\varepsilon}$, besides $\|f - \tau f\|\leq
    \varepsilon$. Set $f_\varepsilon = \frac{\tau f}{1 + \varepsilon}$ and $\overline f_\varepsilon = (1 + \varepsilon)\overline{f}p'$ so that, again by Lemma \ref{close}(2),  $(f_\varepsilon, \overline{f_\varepsilon}): F \leftrightarrows P_k$ is a contractive $(1+3\varepsilon, 0, \frac{1+3\varepsilon}{1-3\varepsilon})$-arrow, $\|f - f_\e\|\leq
    2\varepsilon$ and
$$\|\overline{f_\e}-f_\e \overline{f}\|\leq 3\e$$

Using Lemma \ref{denseness} we pick then $(u,\overline u): F_u\leftrightarrows G_u$ in $\mathfrak U$ for which there exist surjective contractive $(1+2^{-k})$-isometries $\alpha, \beta$ so that the square
$$\begin{CD}
F_u@>u>> G_u\\
@V\alpha VV @VV\beta V\\
F@>>{\delta}> G\end{CD}$$
is commutative in both directions, i.e., $\delta\alpha = \beta u$ and $\alpha \overline u  = \overline {\delta} \beta$. Thus, $(f_\varepsilon \alpha, \alpha^{-1} \overline{f_\varepsilon}): F_u\leftrightarrows  P_k$ is a contractive $(1+3\e, 0, \frac{1+3\varepsilon}{1-3\varepsilon})$-arrow. Thus, some contractive $(1+3\varepsilon, 0, \frac{1+3\varepsilon}{1-3\varepsilon})$-arrow $(f', \overline{f'}): F_u\leftrightarrows P_k$ at distance $\varepsilon'$ (to be chosen) must exist in some set $L_m(F_u, P_k)$, which means that $(f', \overline{f'}) = d_{k, s}$ for some $s$. Since $d_{k, s}\in I_{k+s}$ we have that $(f', \overline{f'})$ is one of the elements forming the operator $\sum d$ that appears in the push-out diagram
$$\begin{CD}
\ell_1(I_{k+s}, F_u)@>{\oplus u}>> \ell_1(I_{k+s}, G_u)\\
@V{\sum d}VV @VVd'V\\
P_{k+s}@>>{u_{k+s}}> P_{k+s+1}
\end{CD}$$

According to Lemma \ref{POprojection}, $(f', \overline{f'})$ admits a contractive $\left(1+3\e, 0, \frac{1+3\varepsilon}{1-3\varepsilon}\right)$-arrow extension
$(f'', \overline{f''}): G_u\leftrightarrows P_{k+s+1}$. The composition $(f'' \beta^{-1}, \beta \; \overline{f''} )$ is a, say,  $(1+ 7\varepsilon, 0, \frac{1+7\e}{1- 7\e})$-arrow such that
\begin{eqnarray*}\|{f''\beta^{-1}}\delta - f \| &=& \|f''u \alpha^{-1} - f \|\\ &=& \|f'\alpha^{-1} - f\|\\ & =& \|(f' - f_\varepsilon \alpha+
f_\varepsilon\alpha)\alpha^{-1} - f\|\\
&\leq&\varepsilon' + \|f_\varepsilon - f\|\\
&\leq&\varepsilon' + 2\varepsilon
\end{eqnarray*}
and also
\begin{eqnarray*}\|\left(\overline{\delta}\beta \; \overline{f''}  - \overline{f}\right)_{|P_{s+k}} \| &=& \|\left(\alpha\; \overline u \; \overline{f''} -  \overline{f}\right)_{|P_{s+k}} \|\\
 &=& \|\left( \alpha \; \overline{f'} \overline{ u_{k+s}} - \overline f\right)_{|P_{s+k}} \|\\& =&
\|\left(\alpha \left (\overline{f'} - \alpha^{-1} \overline{f_\varepsilon} + \alpha^{-1} \overline{f_\varepsilon}\right )\overline {u_{k+s}} -  \overline{f}\right)_{|P_{s+k}} \|\\
&\leq& \e' + \|\left(\overline{f_\varepsilon} -  \overline{f}\right)_{|P_{s+k+1}} \|\\
&\leq&\e' + 3\e.
\end{eqnarray*}

We have thus obtained that

$$\forall \e>0 \quad \exists n\in \N \; \; \exists (f_n, \overline f{_n}): G \leftrightarrows  P_n : \|f_n \delta- f\|\leq 4\e;\quad \mathrm{and}\quad \|\overline \delta \overline f{_n} - \overline f_{|P_{n-1}}\|\leq 4\e$$
and $(f_n, \overline f{_n})$ is a $((1+7\varepsilon), 0 , \frac{1+7\e}{1- 7\e})$-arrow. An ultraperturbation argument we sketch now is sufficient to conclude that for each $\e$ there is a $((1+\varepsilon), 0 , (1+ \varepsilon))$-arrow $G\leftrightarrows \mathscr K(X)$ making the diagram $\e$-commutative, which is condition i) in Lemma \ref{equiv}, and therefore $\mathscr K(X)$ is a space of almost universal complemented disposition.\medskip

\emph{Ultraperturbation argument}: Observe that the problem lies in that the projection $\overline{f_m}$ behave well only on $P_m$. Inclusions behave well in the sense that once some $f_n$ has been obtained then one can set $f_m = u_{m-1}\dots u_n f_n$. To get a good projection defined on the whole $\mathscr K(X)$ just define $[\overline f{_n}]:  (\Pi P_n)_\U \leftrightarrows G_\U = G$ and compose with the diagonal canonical embedding $\mathscr K(X)\to \mathscr K(X)_\U$.\end{proof}

\section{Uniqueness}
\adef Given a Banach space $X$ with separable dual we will denote $\mathscr K(X)$ the space constructed in Theorem \ref{KX}.\zdef

We need a simple observation:

\begin{lema}\label{baplemma} If $X$ has separable dual and skeleton then $\mathscr K(X)$ has skeleton.
\end{lema}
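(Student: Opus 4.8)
The plan is to show that a skeleton for $\mathscr{K}(X)$ can be manufactured from a skeleton for $X$ together with the finite-dimensional increments introduced at each push-out step of the construction in Theorem \ref{KX}. Recall that $\mathscr{K}(X) = \overline{\bigcup_n P_n}$, where $P_0 = X$ and each inclusion $u_n : P_n \to P_{n+1}$ is part of a $(1,0,1)$-arrow $(u_n, U_n)$ with $U_n : P_{n+1} \to P_n$ a norm-one projection (the ``Moreover'' part of Lemma \ref{amostdpo}), so each $P_n$ is $1$-complemented in $\mathscr{K}(X)$ and the quotients $P_{n+1}/P_n$ are finite-dimensional. These $P_n$ almost form a skeleton already, except that they are infinite-dimensional because $P_0 = X$ is. The key extra hypothesis is that $X$ itself has a skeleton $(X_k)$: a sequence of finite-dimensional subspaces with $X_k$ $1$-complemented in $X_{k+1}$ and $X = \overline{\bigcup_k X_k}$.

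First I would record the two facts I will glue together. Writing $\pi_k : X \to X_k$ for the norm-one projections witnessing the skeleton of $X$, I extend each $\pi_k$ to a norm-one projection of $\mathscr{K}(X)$ onto $X_k$ by composing with the norm-one projection $\overline{\imath_X} : \mathscr{K}(X) \to X$ coming from the $1$-complementation of $X$ in $\mathscr{K}(X)$; that is, $X_k$ is $1$-complemented in $\mathscr{K}(X)$, and in particular $1$-complemented in $X$ hence in $P_0$. Second, because each $P_{n+1}/P_n$ is finite-dimensional, I can enlarge the chain $X_0 \subset X_1 \subset \cdots$ inside $X = P_0$ by interleaving finite-dimensional pieces of the $P_n$'s.

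The natural construction is the following interleaving. Define a sequence of finite-dimensional subspaces $S_m$ of $\mathscr{K}(X)$ by alternately climbing the skeleton of $X$ and absorbing one more finite-dimensional increment $P_{n+1} \ominus P_n$: concretely, I would choose $S_m$ so that $S_m \supseteq X_m$, so that $S_m \supseteq (\text{a finite-dimensional subspace of } P_m \text{ generating } P_m \text{ over } P_{m-1})$, and so that each $S_m$ is $1$-complemented both in $S_{m+1}$ and in $\mathscr{K}(X)$. Since $\bigcup_m X_m$ is dense in $X$ and $\bigcup_n P_n$ is dense in $\mathscr{K}(X)$, the union $\bigcup_m S_m$ will be dense in $\mathscr{K}(X)$, which together with the $1$-complementation gives the required skeleton. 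The main obstacle, and the only genuinely delicate point, is ensuring the \emph{simultaneous} $1$-complementation $S_m \subseteq S_{m+1} \subseteq \mathscr{K}(X)$ with all projections of norm exactly one: adding a finite-dimensional increment to a $1$-complemented subspace need not keep it $1$-complemented unless the increment is chosen compatibly with the projections. I would resolve this by building $S_m$ as the image under a norm-one projection: take the norm-one projection $Q_m$ of $\mathscr{K}(X)$ onto $P_m$, compose with a norm-one projection of $P_m$ onto a suitable finite-dimensional subspace $R_m$ with $X_m \subseteq R_m$ (such a projection exists because both $X_m$ and the finite-dimensional complement of $P_{m-1}$ in $P_m$ are $1$-complemented and one can take a direct-sum projection), and set $S_m = R_m$. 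The nesting $S_m \subseteq S_{m+1}$ and the $1$-complementation then follow from the compatibility of the projections $Q_m$ across the chain $(P_n)$, completing the verification that $(S_m)$ is a skeleton for $\mathscr{K}(X)$.
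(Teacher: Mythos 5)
Your proposal takes essentially the same route as the paper's own proof, which writes $P_1 = X \oplus F$ and $P_{n+1} = P_n \oplus C_n$ using the norm-one projections coming from the ``Moreover'' part of Lemma \ref{amostdpo} and then declares the interleaved sequence $(X_n \oplus C_n)$ --- the skeleton of $X$ glued to the accumulated finite-dimensional complements --- to be a skeleton for $\mathscr K(X)$. Your spaces $S_m$ and the direct-sum projections you describe are precisely this interleaving (indeed you make explicit the compatibility point that the paper leaves tacit), so the two arguments coincide.
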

\begin{proof} Using the enumeration of Theorem \ref{KX} one gets that $X$ is $1$-complemented and has finite codimension in $P_1$, and then $P_n$ is $1$-complemented and has finite codimension in $P_{n+1}$. Let us write $P_{n+1} = P_n \oplus C_n$ and $P_1= X \oplus F$ with $F$ finite-dimensional. If $(X_n)_n$ is a skeleton of $X$ then
$(X_n \oplus C_n)$ is a skeleton for $\mathscr K(X)$.\end{proof}

Thus, contrarily to what occurs with Gurariy space:

\begin{proposition}\label{noisomo}
There are non-isomorphic separable spaces of almost universal complemented disposition.
\end{proposition}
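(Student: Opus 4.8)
The plan is to exhibit two separable spaces of almost universal complemented disposition that cannot be isomorphic, and the natural source of such examples is Theorem \ref{KX}: since every Banach space $X$ with separable dual embeds as a $1$-complemented subspace of the separable a.u.c.d. space $\mathscr K(X)$, any isomorphic invariant that is inherited by complemented subspaces and that distinguishes two such $X$'s will distinguish the corresponding $\mathscr K(X)$'s. First I would fix a property $P$ of Banach spaces that (a) passes to complemented subspaces, and (b) is realized by some spaces with separable dual and refuted by others with separable dual. The cleanest candidate is cotype (or more simply, containing an isomorphic copy of $\ell_2$ versus not): for instance, take $X_1 = \ell_2$ and $X_2 = c_0$, both of which have separable dual and admit a skeleton, so by Lemma \ref{baplemma} the spaces $\mathscr K(X_1)$ and $\mathscr K(X_2)$ are genuinely of the type under discussion.

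The key step is to pin down an invariant that survives the passage from $X$ to $\mathscr K(X)$ \emph{and} back. Because $X$ is $1$-complemented in $\mathscr K(X)$, any property of $\mathscr K(X)$ that is inherited by complemented subspaces is automatically a property of $X$; conversely I need some property of $\mathscr K(X)$ that is \emph{forced} by the corresponding property of $X$ but cannot hold simultaneously for two incompatible choices. Here I would use type/cotype: if $\mathscr K(X_1) \cong \mathscr K(X_2)$, then since $\ell_2 = X_1$ is complemented in $\mathscr K(X_1)$, $\ell_2$ would be isomorphic to a complemented subspace of $\mathscr K(X_2)$, hence also to a complemented subspace of $\mathscr K(X_1)$ — this alone is not yet a contradiction, so I must be more careful and instead track a property of the ambient space directly.

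The honest route, and the one I expect to work, is to choose $X_1$ and $X_2$ so that the \emph{presence of a complemented subspace isomorphic to a fixed space} differs. Concretely, by Corollary \ref{cuniv} every a.u.c.d. space with skeleton already contains isometric copies of all separable spaces, so mere containment is useless; what is \emph{not} automatic is the existence of a \emph{complemented} copy. So I would argue as follows: $\mathscr K(\ell_2)$ contains $\ell_2$ as a complemented subspace by construction. If I can produce a space $Y$ with separable dual and skeleton such that $\mathscr K(Y)$ has no complemented subspace isomorphic to $\ell_2$, then $\mathscr K(\ell_2) \not\cong \mathscr K(Y)$ and we are done. The natural obstruction is an isomorphic invariant of the \emph{whole} construction; since every $P_{n+1}/P_n$ is finite dimensional and the pushouts only amalgamate finite dimensional arrows over a space modeled on $\ell_1$-sums, one expects $\mathscr K(Y)$ to inherit some structural feature of $Y$ (such as failing to contain complemented Hilbertian subspaces when $Y$ does not) that persists under isomorphism.

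The hard part, and the main obstacle, is precisely this last verification: controlling which spaces appear \emph{complemented} in $\mathscr K(X)$, rather than merely isometrically embedded, because the a.u.c.d. property guarantees a wealth of complemented finite dimensional pieces and Corollary \ref{cuniv} guarantees all separable spaces appear (uncomplemented). Since proving an intrinsic structural invariant of $\mathscr K(X)$ is delicate, the cleanest short proof instead leans only on the $1$-complementation of $X$ itself: if $\mathscr K(X_1) \cong \mathscr K(X_2)$ then a complemented-subspace invariant $\Phi$ gives $\Phi(X_1) = \Phi(\mathscr K(X_1)) = \Phi(\mathscr K(X_2)) = \Phi(X_2)$ whenever $\Phi$ is invariant under adding complemented copies, which forces a contradiction for a well-chosen pair. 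Thus the decisive task is to name a single such invariant $\Phi$ — I expect ``the supremum of $q$ for which the space has cotype $q$'', applied to $X_1 = \ell_2$ and a space $X_2$ of strictly larger cotype — and to check it transfers correctly between $X$ and $\mathscr K(X)$ in both directions.
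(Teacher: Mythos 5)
Your general frame is the right one---find an isomorphic invariant that passes to complemented subspaces and a pair $X_1,X_2$ on which it disagrees---but every concrete invariant you name fails, and in fact your choice of test spaces is doomed from the start. Cotype cannot work: cotype is inherited by \emph{arbitrary} subspaces, and by Corollary \ref{cuniv} every a.u.c.d.\ space with skeleton contains an isometric copy of every separable Banach space, in particular $c_0$; hence no such $\mathscr K(X)$ has any finite cotype, the transfer $\Phi(X)=\Phi(\mathscr K(X))$ you need is false (already $\Phi(\ell_2)=2\neq\infty=\Phi(\mathscr K(\ell_2))$), and the invariant takes the same value on all candidates. Your fallback invariant, ``contains a complemented copy of $\ell_2$,'' fails as well: by Theorem \ref{garbfdd} every a.u.c.d.\ space with skeleton contains isometric $1$-complemented copies of \emph{every} space with a skeleton, $\ell_2$ included. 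The deeper obstruction is that any pair $X_1,X_2$ both having skeletons (such as your $\ell_2$ and $c_0$) can never produce non-isomorphic examples: by Lemma \ref{baplemma} both $\mathscr K(X_1)$ and $\mathscr K(X_2)$ then have skeletons, and by Theorem \ref{uniqGarbu} all a.u.c.d.\ spaces with skeleton are isometric.

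The paper's proof exploits exactly the loophole this uniqueness theorem leaves open: the only way two separable a.u.c.d.\ spaces can fail to be isomorphic is if one of them has no skeleton, and the invariant that detects this is the BAP itself. Take $X$ with separable dual but \emph{failing} the BAP (e.g.\ a subspace of $c_0$ without the approximation property). Since $X$ is $1$-complemented in $\mathscr K(X)$ by Theorem \ref{KX} and the BAP passes to complemented subspaces, $\mathscr K(X)$ fails the BAP, so in particular it has no skeleton; on the other hand $\mathscr K(\R)$ has a skeleton (Lemma \ref{baplemma}) and hence the BAP. As the BAP is an isomorphic invariant, $\mathscr K(X)\not\cong\mathscr K(\R)$. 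Note that this uses the complementation of $X$ in $\mathscr K(X)$ in only one direction (downward inheritance of failure of BAP), precisely the direction your scheme also had available; what your proposal lacked was the realization that the invariant must be one that the universality results of the paper do not trivialize, and BAP-failure is essentially the only such invariant in sight.
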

\begin{proof} When $X$ has not the BAP the space $\mathscr K(X)$ cannot have skeleton (it cannot have BAP) and thus it cannot be isomorphic to any space $\mathscr K(Y)$ constructed over a space $Y$ with skeleton by virtue of the previous lemma\end{proof}

This marks a neat difference with the situation for separable spaces of almost universal
disposition. Still, there is only one space of almost universal complemented disposition with skeleton, up to isomorphism: on one side the class of separable spaces with BAP is closed under $c_0$-sums, which means by Lemma \ref{mix} that there is only one complementably universal member, up to isomorphisms; since all spaces of complementably universal disposition with skeleton are complementably universal for the class of separable spaces with BAP, by Theorem \ref{garbfdd}, the assertion follows. Let us show that the space is unique, up to isometries

ameno
\begin{teor}\label{uniqGarbu} Let  $U, V$ be two spaces of almost universal complemented
disposition having a skeleton. let $\imath: A \to B$ be an isometry between two finite-dimensional $1$-complemented subspaces $A\subset U$ and $B\subset V$. For every $\e >0$ there exists an isometry $\tau: U \to V$ such that $\|\tau_{|A} - \imath\|\leq \e$. In particular, all spaces of almost universal complemented disposition with skeleton are
isometric.\end{teor}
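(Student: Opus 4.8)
The plan is to prove the uniqueness of spaces of almost universal complemented disposition with skeleton by a standard back-and-forth (Cantor--Schr\"oder--Bernstein style) argument, building the isometry $\tau : U \to V$ as the limit of a coherent sequence of $(1,0,1)$-arrows between larger and larger finite-dimensional $1$-complemented subspaces of $U$ and $V$. Since both $U$ and $V$ have skeletons, fix skeletons $(U_n)$ and $(V_n)$ of finite-dimensional $1$-complemented subspaces with $U = \overline{\bigcup_n U_n}$, $V = \overline{\bigcup_n V_n}$, and arrange that the given $A$ sits inside some $U_{n_0}$ and $B$ inside some $V_{m_0}$. First I would fix a summable sequence $(\varepsilon_k)$ with $\sum_k \varepsilon_k = \varepsilon$ (and each $\varepsilon_k < 1/3$) to control the cumulative perturbation.

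The core of the argument is the alternating extension, using the a.u.c.d.\ property of $V$ (on odd steps) and of $U$ (on even steps). Suppose inductively we have a $(1,0,1)$-arrow $(\sigma_k, \overline{\sigma_k}): S_k \leftrightarrows T_k$ between finite-dimensional $1$-complemented subspaces $S_k \subset U$, $T_k \subset V$, approximately extending $\imath$. To push forward, enlarge $S_k$ to the next skeleton piece, obtaining a double arrow $S_k \leftrightarrows S_{k}'$ with $S_k' \subset U$ finite-dimensional and $1$-complemented; now apply the a.u.c.d.\ property of $V$ to the double arrow $(\sigma_k,\overline{\sigma_k}): S_k \leftrightarrows V$ and the double arrow $S_k \leftrightarrows S_k'$, yielding a $(1+\varepsilon_k, \varepsilon_k, 1)$-arrow $S_k' \leftrightarrows V$ making the diagram commute. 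The Approximation Lemma \ref{compfelix} then replaces this almost double arrow by a genuine $(1,0,1)$-arrow at distance at most $72\varepsilon_k$; a small perturbation via Lemma \ref{close} lands its range inside some skeleton piece $T_{k+1} \subset V$, producing $(\sigma_{k+1}, \overline{\sigma_{k+1}}): S_{k+1} \leftrightarrows T_{k+1}$ with $S_{k+1} \supset S_k'$ (hence $S_{k+1} \supset U_k$ eventually) and $\|\sigma_{k+1}|_{S_k} - \sigma_k\| \leq 72\varepsilon_k + (\text{perturbation})$. The even steps symmetrically exploit the a.u.c.d.\ property of $U$ to guarantee that the ranges $T_k$ exhaust $V$, so that both $\bigcup_k S_k$ is dense in $U$ and $\bigcup_k T_k$ is dense in $V$.

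With the coherence estimates in hand, I would then pass to the limit exactly as in the proof of Theorem \ref{garbfdd}: for $x = \lim_k x_k$ with $x_k \in S_k$ and $\sum_k \|x_{k+1}-x_k\| < \infty$, set $\tau(x) = \lim_k \sigma_k(x_k)$, and dually define $\overline{\tau}(v) = \lim_k \sigma_k \overline{\sigma_k}\,\overline{\jmath_k}(v)$ where $\overline{\jmath_k}$ is the skeleton projection onto $T_k$. The summability of $(72\varepsilon_k)$ makes both sequences Cauchy, so $\tau$ is a well-defined isometric embedding and $\overline{\tau}$ a norm-one projection with $\overline{\tau}\tau = 1_U$; because the back-and-forth guarantees surjectivity (the $T_k$ exhaust $V$), $\tau$ is an onto isometry. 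The telescoping bound gives $\|\tau|_A - \imath\| \leq \sum_k 72\varepsilon_k$, which can be made at most $\varepsilon$ by rescaling the initial choice of $(\varepsilon_k)$. The final sentence follows by taking $A = B = 0$ (or any fixed one-dimensional matched pair).

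The main obstacle I anticipate is the bookkeeping needed to make the back-and-forth genuinely exhaust both $U$ and $V$ while keeping every intermediate object finite-dimensional and $1$-complemented, and simultaneously keeping the perturbation errors summable; the delicate point is that each application of the Approximation Lemma introduces an error of order $72\varepsilon_k$ rather than $\varepsilon_k$, so the sequence $(\varepsilon_k)$ must be chosen with this constant absorbed, and one must verify that the perturbation step of Lemma \ref{close} (which slightly moves ranges into skeleton pieces and rescales arrows) does not destroy the commutativity already achieved on the previously constructed subspaces. Verifying that $\overline{\tau}$ really is a two-sided coherent limit (i.e.\ that the projections, not just the embeddings, converge and satisfy $\overline{\tau}\tau = 1_U$) is where I expect the computation to be most technical, and it parallels closely the projection-convergence estimates already carried out in the proof of Theorem \ref{garbfdd}.
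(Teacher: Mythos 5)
Your proposal is correct and takes essentially the same route as the paper: the paper's own proof is exactly a back-and-forth argument alternating the a.u.c.d.\ property of $V$ and of $U$ with the Approximation Lemma \ref{compfelix} and the perturbation Lemma \ref{close}, with a summable error sequence $\varepsilon=\sum\varepsilon_n$, just as you describe. The only difference is one of detail, not of method: the paper leaves the final limit step implicit (``iterate the argument''), while you spell it out by paralleling the limit construction of Theorem \ref{garbfdd}.
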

\begin{proof} The proof is a simple back-and-forth argument combining the Approximation Lemma \ref{compfelix} and the Perturbation argument of Lemma \ref{close}: let  $(U_n)$ (resp. $(V_n)$) be a skeleton for $U$ (resp. $V$), so that $(u_n, \overline{u_n}): U_n\leftrightarrows U$ and $(v_n, \overline{v_n}): V_n\leftrightarrows V$ are double arrows.

Let  $A\subset U$ be a $1$-complemented subspace with embedding and projection $(\imath_A, \overline {\imath_A})$; and, analogously, $B\subset U$ be a $1$-complemented subspace with embedding and projection $(\imath_B, \overline{ \imath_B})$. Let $\imath: A \to B$ be an isometry between them. Set $\varepsilon=\sum \e_n$. After some $\e'$-perturbation of $(\imath_A, \overline {\imath_A})$ using Lemma \ref{close} ---which we do not relabel--- we can assume that $[\imath_A (A) +U_1] \subset U_{2}$ (actually some $n_2$, but again we do not relabel). This small perturbation we ignore by using the Approximation Lemma \ref{compfelix}, so we still assume that that $(\imath, \imath^{-1}\overline{\imath_B})$ is a double arrow $A \leftrightarrows V$. By the a.u.c.d character of $V$ this double arrow extends to some $\e_1$-arrow $U_2 \leftrightarrows V$ that can therefore be $\e_1$-approximated by a double arrow: $(\imath_1, \overline{\imath_1}): U_2 \leftrightarrows V$. Now repeat the argument back: we work with $(\jmath_1, \overline{\jmath_1})= (\imath_1^{-1}, \imath_1\overline{u_2}):  \imath_1(U_2) \leftrightarrows U$. After some $\e_2$-perturbation we assume that  $\imath_1(U_2) \subset V_2$, use the Approximation Lemma \ref{compfelix} to not relabel, so that
 $(\jmath_1, \overline{\jmath_1})$ is a double arrow that the a.u.c.d. character of $U$ allows one to extend to an $\e_2$-arrow $V_2 \leftrightarrows U$ that can therefore be $\e_2$-approximated by a double arrow: $(\jmath_2, \overline{ \jmath_2}): V_2 \leftrightarrows U$. And forth again. Iterate the argument. \end{proof}

This result should be compared to \cite[Thm. 7.3]{garbula}. We will (improperly) call Kadec space to $\mathscr K(\R)$, the only (up to isometries) separable space of almost universal complemented disposition having skeleton. Which is of course complementably universal for all separable spaces with BAP. We say ``improperly" because we cannot prove that the Kadec space $\mathcal K$ constructed in \cite{kade} is of almost universal complemented disposition, although we know that it is isomorphic to $\mathscr K(\R)$.

\section{Spaces of universal complemented disposition}

Spaces of universal disposition (i.e., the case $\varepsilon=0$) were studied in \cite{gurari,accgm2,accgmLN}. In the same spirit, we have:

\adef A Banach space $E$ will be called of universal complemented disposition if given a double
arrow $(i, \overline i) : F \leftrightarrows G$ between finite dimensional spaces and a double arrow $(j, \overline j ): F
\leftrightarrows E$ there exists a double arrow $(J, \overline J): G \leftrightarrows E$ making  a commutative diagram.
$$\xymatrix{
 F \ar[rr]_{i} \ar@<-1ex>[rd]_{j} && G \ar@<-1ex>[ll]_{\overline i} \ar[ld]_{J}\\
&E \ar@<-0.5ex>[lu]_{\overline j}  \ar@<-1ex>[ru]_{\overline J}}
$$\zdef

The additional hypothesis of having separable dual is no longer required, and one gets:

\begin{proposition}\label{univfin} Every Banach space can be isometrically embedded as a $1$-complemented subspace of a space of universal complemented disposition.
\end{proposition}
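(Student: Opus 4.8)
The plan is to construct the target space by the same transfinite amalgamation device used in Theorem \ref{KX}, but now removing the ``separable dual'' restriction and replacing the countable dense set of arrows by \emph{all} double arrows between finite-dimensional spaces. Concretely, starting from an arbitrary Banach space $X=P_0$, I would build an increasing chain of spaces $(P_\alpha)$ indexed by ordinals below some large enough cardinal, where at successor stages one amalgamates, via a single push-out as in Lemma \ref{amostdpo}, over the set of \emph{all} double arrows $(f,\overline f): F \leftrightarrows P_\alpha$ with $F$ finite-dimensional, together with all double arrows $(i,\overline i): F \leftrightarrows G$ between finite-dimensional spaces. At limit stages one takes the closure of the union. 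The universal complemented disposition space $E$ is the final term of this chain. Since we now work with $\varepsilon=0$ (genuine double arrows and commutative diagrams), no approximation or ultraperturbation is needed: the push-out produces a \emph{genuine} $(1,0,1)$-arrow extension, so the Approximation Lemma plays no role here.

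First I would fix the amalgamation step precisely. Given $P_\alpha$ and a double arrow $(i,\overline i): F\leftrightarrows G$ between finite-dimensional spaces together with a double arrow $(j,\overline j): F\leftrightarrows P_\alpha$, the push-out of $i$ and $j$ (Lemma \ref{amostdpo}) yields a space $Q$, an into isometry $P_\alpha \to Q$ admitting a norm-one projection (the ``Moreover'' part of Lemma \ref{amostdpo}, exactly as in Theorem \ref{KX}), and a double arrow $G\leftrightarrows Q$ extending the given data commutatively. To handle all arrows at once at stage $\alpha$, I would amalgamate over an $\ell_1$-direct sum indexed by the whole set of relevant double arrows, precisely as the $\ell_1(I_{n+1},F_u)\to\ell_1(I_{n+1},G_u)$ construction in the proof of Theorem \ref{KX}; Lemma \ref{isom} guarantees the resulting map $P_\alpha\to P_{\alpha+1}$ is an into isometry and each $P_\alpha$ is $1$-complemented in $P_{\alpha+1}$. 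Composing these $1$-complemented inclusions along the chain and passing to closures at limits shows that $X$ is $1$-complemented in $E$.

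The key verification is that $E$ has universal complemented disposition. Given a double arrow $(i,\overline i):F\leftrightarrows G$ and a double arrow $(j,\overline j): F\leftrightarrows E$, the range $j(F)$ is finite-dimensional, hence contained (up to the usual compactness/density argument) in some $P_\alpha$; the projection $\overline j$ need only be handled on this subspace, and a short argument lets one realize $(j,\overline j)$ as a genuine double arrow $F\leftrightarrows P_\alpha$ for some $\alpha$. By construction, this particular pair $\big((i,\overline i),(j,\overline j)\big)$ was included in the amalgamation performed at stage $\alpha$, so the stage-$(\alpha+1)$ push-out already provides a genuine $(1,0,1)$-arrow $(J,\overline J): G\leftrightarrows P_{\alpha+1}\subset E$ making the required diagram commute. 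Composing with the $1$-complemented inclusion $P_{\alpha+1}\hookrightarrow E$ keeps it a double arrow into $E$.

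The main obstacle is purely set-theoretic bookkeeping: ensuring that the transfinite chain actually \emph{closes off}, i.e. that there is an ordinal $\lambda$ at which \emph{every} double arrow $F\leftrightarrows E$ has already been taken care of at some earlier stage, so that no ``cofinal escape'' of finite-dimensional configurations occurs. This is the standard cardinality-counting argument: the collection of isometry types of finite-dimensional spaces, and of double arrows between them and into the growing $P_\alpha$'s, has bounded cardinality, so a suitable regular cardinal $\lambda$ of large enough cofinality works, and every finite-dimensional subspace of $E=\overline{\bigcup_{\alpha<\lambda}P_\alpha}$ already sits inside some $P_\alpha$ with $\alpha<\lambda$. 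Apart from this, I expect all analytic content to be a direct, $\varepsilon=0$ specialization of the estimates already carried out in the proof of Theorem \ref{KX}, with Lemmas \ref{amostdpo}, \ref{isom} and \ref{POprojection} doing the heavy lifting.
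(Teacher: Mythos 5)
Your construction itself follows the paper's route: the paper also builds $\mathscr K_{\omega_1}(X)$ by a transfinite ($\omega_1$-step) iterated push-out over the set of \emph{all} double arrows $d: F_u \leftrightarrows P_\alpha$ paired with all double arrows $u: F_u \leftrightarrows G_u$, gets $1$-complementedness of $X$ from the ``Moreover'' part of Lemma \ref{amostdpo}, and captures any finite-dimensional $j(F)\subset E$ inside some $P_\alpha$ by uncountable cofinality (your cardinal-counting digression is unnecessary; $\omega_1$ already works). The genuine gap is in your final verification, precisely where you assert that ``no approximation or ultraperturbation is needed.'' The stage-$\alpha$ push-out extension $(J,\overline J): G \leftrightarrows P_{\alpha+1}$ of $(j,\overline j|_{P_\alpha})$ along $(i,\overline i)$ satisfies, by Lemma \ref{POprojection}, only
$$\overline i\, \overline J \;=\; \overline j|_{P_\alpha}\,\overline{u_\alpha},$$
where $\overline{u_\alpha}: P_{\alpha+1}\to P_\alpha$ is the push-out projection; that is, the projections commute \emph{on $P_\alpha$}, not on $E$. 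When you then compose with the coherent norm-one projection $\pi_{\alpha+1}: E \to P_{\alpha+1}$, you do obtain a genuine $(1,0,1)$-arrow $(J, \overline J\pi_{\alpha+1}): G\leftrightarrows E$, but its projection part satisfies
$$\overline i\,\bigl(\overline J \pi_{\alpha+1}\bigr) \;=\; \overline j\circ \pi_\alpha ,$$
with $\pi_\alpha = \overline{u_\alpha}\pi_{\alpha+1}: E\to P_\alpha$. The definition of universal complemented disposition requires $\overline i\,\overline J = \overline j$ exactly on \emph{all} of $E$, and $\overline j\circ\pi_\alpha = \overline j$ holds only if the given projection $\overline j: E\to F$ happens to factor through $\pi_\alpha$ -- which an arbitrary $\overline j$ (constrained only by $\overline j\, j = 1_F$ and $\|\overline j\|\le 1$) need not do. So your diagram commutes on a dense-in-no-sense-sufficient subspace, not on $E$.

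This is exactly the point the paper does \emph{not} skip: it records that for \emph{every} $\alpha\ge\beta$ there is an extension $(f_\alpha,\overline{f_\alpha}): G\leftrightarrows P_{\alpha+1}$ with $f_\alpha\delta = f$ and $\overline\delta\,\overline{f_\alpha} = \overline f$ on $P_\alpha$, and then invokes a final ultraperturbation argument -- an ultrafilter limit over the cofinally many stages, using that $G$ is finite dimensional so $G_{\mathscr U}=G$ -- to assemble from this cofinal family a single projection $E\to G$ whose composition with $\overline\delta$ agrees with $\overline f$ on the dense subspace $\bigcup_\alpha P_\alpha$ and hence, by continuity, on all of $E$. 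Setting $\varepsilon = 0$ removes the need for the Approximation Lemma and the correction lemmata, as you correctly note, but it does not remove this coherence problem between the locally-constructed projections and the globally-given one; some gluing device over cofinally many stages is still required, and your proposal contains none.
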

\begin{proof} We will use the same device as for the construction of $\mathscr  K(X)$, although everything is much simpler now
since no correction lemmata or countable dense sets are required. The construction has now $\omega_1$ steps.  At step $\alpha$, assuming $P_\alpha$ has been obtained, we get $P_{\alpha+1}$ as the push out in the diagram
\begin{equation}\label{palfa}\begin{CD}
\ell_1(I_\alpha, F_u)@>{\oplus u}>> \ell_1(I_\alpha, G_u)\\
@V{\sum d}VV @VVV\\
P_\alpha@>>{u_\alpha}> P_{\alpha+1}.
\end{CD}\end{equation}
Here $I_\alpha$ represents an index set containing: all $(1,0,1)$-arrows $d: F_u \leftrightarrows P_\alpha$ from a finite dimensional space $F_u$ into $P_\alpha$ each of them repeated as many times as $(1,0,1)$-arrows $u: F_u\leftrightarrows G_u$ between finite dimensional spaces. The operator $\oplus u$ is the vector sum of all operators $u$ and $\sum d$ the sum of all operators $d$. For $\alpha=0$ set $P_\alpha=X$. If $\alpha$ is a limit ordinal then $P_\alpha = \overline{\cup_{\beta<\alpha} P_{\beta }}$.\medskip

The resulting space  $\mathscr  K_{\omega_1}(X)$ is of universal complemented disposition. Indeed, consider a double arrow $(\delta, \overline \delta): F\leftrightarrows G$ between two finite dimensional spaces and a double arrow $(f, \overline f): F\leftrightarrows \mathscr K_{\omega_1}(X)$. We choose $\alpha<\omega_1$ in such a way that
$f(F)$ is actually contained in $P_\alpha$. So, $(f, {\overline f}_{|P_\alpha}): F\leftrightarrows P_\alpha$ is one of the arrows $d$ appearing in diagram (\ref{palfa}) and can therefore be extended through any double arrow $F\leftrightarrows G$, in particular $(\delta, \overline \delta)$ to a double arrow $G\leftrightarrows P_{\alpha+1}$. We have obtained now that
$$\exists \beta \;\;\forall\; \alpha\geq \beta \quad \exists (f_\alpha, \overline{f_\alpha})\ : G \leftrightarrows P_{\alpha+1}: f_\alpha\delta =f \quad\mathrm{and}\quad
\overline{\delta}\;\overline{f_\alpha} = \overline{f_{P_\alpha}}$$
which a simple ultraperturbation argument transforms into a double arrow $G\leftrightarrows \mathscr K_{\omega_1}(X)$ extending $(f, \overline f)$.\end{proof}

\adef Given a Banach space $X$ we will call $\mathscr K_{\omega_1}(X)$ the Banach space constructed in Theorem \ref{univfin}.\zdef

\begin{proposition}\label{underCHfin}$\;$\begin{enumerate}
\item Under {\sf CH}, if $X$ has an $\omega$-skeleton then the space $\mathscr K_{\omega_1}(X)$ has an $\omega$-skeleton.
\item Under {\sf CH}, if $X$ has skeleton then $\mathscr K_{\omega_1}(X)$ has the BAP.
\end{enumerate}
\end{proposition}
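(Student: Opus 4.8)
The plan is to run both items on the transfinite push-out presentation $\mathscr K_{\omega_1}(X)=\overline{\bigcup_{\alpha<\omega_1}P_\alpha}$ of Proposition \ref{univfin}, exploiting that, by the ``Moreover'' part of Lemma \ref{amostdpo}, every map $u_\alpha:P_\alpha\to P_{\alpha+1}$ is part of a $(1,0,1)$-arrow; thus $(P_\alpha)_{\alpha\le\omega_1}$ is already a continuous increasing chain of $1$-complemented subspaces, and more generally every sub-push-out built from a subset of the arrows in diagram (\ref{palfa}) carries a norm-one projection by Lemma \ref{POprojection}. The role of {\sf CH} is a counting one: it is what lets a closing-off recursion of length $\omega_1$ reach all the relevant data and what keeps the rich family of separable subspaces one produces $\sigma$-complete.

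For (1) I would argue by transfinite induction that each $P_\alpha$ carries an $\omega$-skeleton in the sense of Section \ref{skeleton}, passing the property along the construction. The base case is the hypothesis on $X=P_0$. For a limit ordinal one has the continuous union $P_\alpha=\overline{\bigcup_{\beta<\alpha}P_\beta}$ and amalgamates the skeletons of the $P_\beta$. At a successor one faces the push-out of $P_\alpha$ with $\ell_1(I_\alpha,G_u)$; the latter, an $\ell_1$-sum of finite-dimensional spaces, trivially carries an $\omega$-skeleton, so the real content is that a push-out of two spaces having $\omega$-skeletons again has one. I would obtain this by selecting, under {\sf CH}, a rich family of separable $1$-complemented subspaces $S$ that are closed under the push-out data ---in the sense that $S$ is itself a sub-push-out, its constituent arrows having all their domain data inside $S$--- so that Lemma \ref{POprojection} delivers a norm-one projection onto $S$; {\sf CH} guarantees that closing a separable set under the countably many demands arising at each stage stabilises to a separable space and that the resulting family is $\sigma$-directed. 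Taking $\alpha=\omega_1$ yields the $\omega$-skeleton of $\mathscr K_{\omega_1}(X)$.

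For (2) I would reduce to (1). A space with a (finite-dimensional) skeleton is separable and in particular has an $\omega$-skeleton, so (1) applies and equips $\mathscr K_{\omega_1}(X)$ with a projectional skeleton of separable $1$-complemented subspaces together with the associated norm-one projections $\pi_s$ satisfying $\pi_s\pi_t=\pi_t\pi_s=\pi_s$ for $s\le t$, dense union and continuity along countable suprema. Each separable member of the skeleton is a sub-push-out assembled from $X$ and from $\ell_1$-sums of finite-dimensional spaces, hence, exactly as in Lemma \ref{baplemma}, it carries a skeleton and therefore the metric approximation property. Since each vector is recovered from a single increasing sequence of the $\pi_s$, I would glue the norm-one finite-rank approximants of the pieces through the $\pi_s$ into finite-rank operators on $\mathscr K_{\omega_1}(X)$ that approximate the identity on compacta with a bound independent of the data; this is the BAP.

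The step I expect to be the main obstacle is the successor (amalgamation) step of (1): one must produce, under {\sf CH}, a $\sigma$-complete rich family of separable subspaces that are at once closed enough under the arrows of diagram (\ref{palfa}) to be sub-push-outs ---hence $1$-complemented by Lemma \ref{POprojection}--- and small enough to remain separable. The tension is resolved by closing only downward along the chosen, countably many arrows, rather than under all admissible double arrows whose projection parts would be too numerous, while {\sf CH} matches the cardinality of the data with the length $\omega_1$ of the recursion. Everything in (2) beyond invoking (1) is the routine uniformity bookkeeping of gluing approximants along a commutative projectional skeleton.
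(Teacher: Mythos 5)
For item (1) your route is genuinely different from the paper's and is viable in outline, so it is worth recording the contrast. The paper does not analyse the construction of Proposition \ref{univfin} a posteriori: under {\sf CH} it writes each size-$\mathfrak c$ index set $I_\alpha$ as an increasing union $I_\alpha=\bigcup_{\mu<\omega_1}\Gamma_{\alpha,\mu}$ of countable sets and re-runs the recursion with a diagonal bookkeeping (at each successor stage one pushes out only along $\bigcup_{i+j\le\alpha+1}\Gamma_{i,j}$, amalgamated by a further push-out with the next member $X_{\alpha+1}$ of the $\omega$-skeleton of $X$), so that \emph{every} stage $P_\alpha$ is itself separable; the chain $(P_\alpha)_{\alpha<\omega_1}$, each member $1$-complemented in the next by the ``Moreover'' part of Lemma \ref{amostdpo}, is then the $\omega$-skeleton with no further work. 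Your closing-off argument on the original (non-separable-stages) construction replaces this by proving that sub-push-outs with countable data are $1$-complemented; this can be made to work, but it costs you exactly what you gloss over at limit stages and at the final amalgamation over $\omega_1$ stages: one must check that the canonical projections onto nested sub-push-outs are \emph{compatible}, since an increasing continuous chain of separable subspaces, each $1$-complemented in the ambient space by unrelated projections, need not have its limit members $1$-complemented in the subsequent ones. Note also that Lemma \ref{POprojection} is not the right citation: it produces a projection onto a single finite-dimensional factor $B_1$ of a multiple push-out; the complementation of a sub-push-out requires the ``Moreover'' part of Lemma \ref{amostdpo}, associativity of push-outs, and the projection induced via the universal property by a projection of $P_\alpha$ onto the base $S_0$ fixing the ranges of the selected arrows.

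The genuine gap is in item (2), at the words ``exactly as in Lemma \ref{baplemma}''. That lemma works because in the construction of $\mathscr K(X)$ every push-out is taken along the \emph{finite} set $I_{n+1}=\{d_{i,j}\colon i+j\le n+1\}$, so $P_n$ has finite codimension in $P_{n+1}$, one writes $P_{n+1}=P_n\oplus C_n$ with $C_n$ finite-dimensional, and interleaves these complements with the skeleton of $X$. The separable pieces your item (1) produces are assembled by push-outs along \emph{countably many} arrows at each step; each such step has infinite-dimensional codimension, so the interleaving collapses: the candidate members are no longer finite-dimensional and you do not obtain a skeleton (equivalently a $1$-FDD, by Lemma \ref{garlem}) this way. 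The missing idea --- which is precisely the heart of the paper's proof of (2) --- is that a push-out along a countable set of arrows can be re-obtained as the closure of an $\omega$-iterated chain of push-outs along an increasing sequence of \emph{finite} subsets exhausting it; only after this re-decomposition does the argument of Lemma \ref{baplemma} apply and give each separable $1$-complemented piece a genuine skeleton, hence the $1$-BAP. Once that is in place, your final gluing step can be simplified to the paper's local argument: every finite-dimensional subspace of $\mathscr K_{\omega_1}(X)$ lies in some separable $1$-complemented piece with the $\lambda$-BAP, and composing the finite-rank operators of that piece with the norm-one projection onto it yields the $\lambda$-BAP of the whole space.
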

\begin{proof} Under {\sf CH} ($\mathfrak c = \aleph_1$) an thus a set of size $\mathfrak c$ can be written as an increasing union of $\omega_1$ countable sets. Now, the set we are considering is that of double arrows between two separable spaces $A, B$, which has the size of  $\mathfrak L(A,B) \oplus \mathfrak L(B,A)$, namely $\mathfrak c^{\aleph_0} = \mathfrak c$. Let $(X_\alpha)_{\alpha<\omega_1}$ be the $\omega$-skeleton of $X$. Proceed as in the proof of Proposition \ref{univfin} except that now we can arrange things so that all $P_\alpha$ are separable. This is
done by representing each size $\mathfrak c$ set $I_\alpha$ (since $X_\alpha$ is countable) as an increasing union $I_\alpha=\cup_{\mu<\omega_1}\Gamma_{\alpha,\mu}$ of countable sets and performing successive ``diagonalizations" of those sets so that each push-out is done using only with a countable number of operators, so that the resulting push-out space is separable.
Indeed, start with $P_0=X_0$ and decompose the first set $I_0=\cup_{\mu<\omega_1}\Gamma_{0,\mu}$ as an increasing union of countable sets. Make the first push-out only with the elements of $\Gamma_{0,1}$. The space $P_1'$ is thus a separable superspace of $X_0$. Make a new push-out
$$\begin{CD}
X_0@>>> P_1'\\
@VVV @VVV\\
X_1@>>> P_1\end{CD}$$
to obtain a new separable enlargement of $X_1$. Decompose now the set $I_1=\cup_{\mu<\omega_1}\Gamma_{1,\mu}$ as an increasing union of countable sets $\Gamma_{1,\mu}$ and make now push-out only with the elements of $\Gamma_{0,2}\cup \Gamma_{1,1}$. The new space thus obtained $P_2'$ is a separable enlargement of $P_1$. Assume now that a separable $P_\alpha'$ has already been obtained, make the new push-out
$$\begin{CD} X_\alpha@>>> P_\alpha'\\
@VVV @VVV\\
X_{\alpha+1}@>>> P_\alpha\end{CD}$$
and write now $I_\alpha=\cup_{\mu<\omega_1}\Gamma_{\alpha, \mu}$ as the increasing union of countable sets $\Gamma_{\alpha, \mu}$ and make push-out only with the elements of $\cup_{i+j\leq \alpha+1}  \Gamma_{i, j}$. This yields a separable  $P_{\alpha +1}'$. The skeleton of $\mathscr K_{\omega_1}(X)$ are the spaces $(P_\alpha)_{\alpha<\omega_1}$.\medskip

To prove (2) we will actually show that the construction can be modified so that for each scountable $\alpha$ the push-out space $P_\alpha$ has a skeleton. Let us simplify the notation assuming that the space $P_\alpha$ has been obtained making push-out with the countable set $I_\alpha$. Decompose $I_\alpha$ into an increasing sequence of finite sets $I_\alpha = \cup_m F_m$ and observe that $P_\alpha$ could have been obtained making just a sequence of iterated push outs starting with $X$: at step $m$ make push-out with only the elements of $F_m$. Next, observe that the real content of Lemma \ref{baplemma} is that when $X$ has skeleton then so does the space $P_\omega$. Thus, $P_\alpha$ has skeleton. This immediately implies that $\mathscr K_{\omega_1}(X)$ has the BAP since any of its finite dimensional subspaces is contained into some $P_\alpha$, and a Banach space such that any finite dimensional subspace is contained into a $\lambda$-complemented subspace with the $\lambda$-BAP must have the $\lambda$-BAP.\end{proof}

The general version of Theorem \ref{garbfdd} becomes:

\begin{proposition}\label{garbskel}
A Banach space of universal complemented disposition that has a $\omega$-skeleton formed by spaces with skeleton contains isometric $1$-complemented copies of every Banach space with a skeleton.\end{proposition}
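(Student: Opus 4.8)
The plan is to adapt the back-and-forth/transfinite machinery of Theorem \ref{garbfdd} and Proposition \ref{underCHfin} to the $\omega_1$-indexed setting, exploiting that the $\omega$-skeleton of our a.u.c.d. space is built from pieces that each individually have a (countable) skeleton, so that the finite-dimensional approximation-lemma apparatus applies \emph{locally} on each piece. Concretely, let $E$ be of universal complemented disposition with an $\omega$-skeleton $(E_\alpha)_{\alpha<\omega_1}$, where each $E_\alpha$ has a skeleton and $(E_\alpha,\overline{E_\alpha})$ are the associated double arrows, and let $Y$ be a space with a skeleton $(Y_n)_n$ joined by $(1,0,1)$-arrows $(\delta_n,\overline{\delta_n})$. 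The goal is to produce a single $(1,0,1)$-arrow $(f,\overline f):Y\leftrightarrows E$.

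First I would run the inductive push-out construction exactly as in the proof of Theorem \ref{garbfdd}, at each stage $n$ forming the push-out of $(f_n,\overline{f_n}):Y_n\leftrightarrows E_{\alpha_n}$ against $(\delta_n,\overline{\delta_n})$ to get $P_{n+1}$, and then invoking the universal complemented disposition of $E$ (via the push-out extension machinery of Lemma \ref{POprojection} and the commutativity of the defining diagram) to extend the resulting double arrow $P_{n+1}\leftrightarrows E$ to a genuine $(1,0,1)$-arrow. The crucial simplification relative to Theorem \ref{garbfdd} is that in the u.c.d. setting the extension produced is already a \emph{commuting} $(1,0,1)$-arrow rather than an almost-arrow, so no iterated approximation is needed at each step; what one does need is to guarantee that the finite-dimensional images land inside successive members of the $\omega$-skeleton, which is arranged by a diagonalization identical in spirit to that of Proposition \ref{underCHfin}(2): since each $E_\alpha$ has a skeleton, a finite-dimensional subspace of $E_\alpha$ together with the image of $Y_{n+1}$ can be absorbed into a finite-dimensional $1$-complemented subspace, and the whole countable tower $(Y_n)$ is handled by choosing an increasing sequence $\alpha_0<\alpha_1<\cdots<\omega_1$ cofinally so that $f_n(Y_n)$ sits in $E_{\alpha_n}$.

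With the commuting tower $(f_n,\overline{f_n}):Y_n\leftrightarrows E$ in hand, I would then assemble the limit arrow exactly as in the closing paragraphs of Theorem \ref{garbfdd}: define $f(y)=\lim_n f_n(y_n)$ for $y=\lim y_n$ and the projection $\overline f(e)=\lim_n f_n\overline{f_n}\,\overline{\imath_n}(e)$, checking well-definedness and the Cauchy estimates, which are in fact easier here because the arrows commute on the nose ($\varepsilon_n$-error terms either vanish or are controlled by the summable $\sum\varepsilon_n$ coming only from the perturbation step that moves images into the skeleton). The verification that $\overline f f=\mathrm{id}_Y$ and that $\overline f$ has norm one is the same telescoping computation as before. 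Since $\sup_\alpha E_\alpha=E$ and every finite-dimensional subspace of $E$ meets some $E_\alpha$, cofinality of $(\alpha_n)$ is what lets the construction see all of $Y$ while staying inside the $\omega$-skeleton.

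The main obstacle I anticipate is purely bookkeeping of the transfinite/diagonal indexing: one must simultaneously enumerate the countable skeleton $(Y_n)$ of the domain and choose a cofinal increasing sequence $(\alpha_n)_{n<\omega}$ in $\omega_1$ so that each push-out image is captured by $E_{\alpha_n}$, while ensuring each $E_{\alpha_n}$ supplies (through its own skeleton) a finite-dimensional $1$-complemented subspace containing the relevant image---this is the content that Lemma \ref{baplemma} and Proposition \ref{underCHfin} package, and it is what replaces the single $\varepsilon$-free extension step. The analytic estimates, by contrast, are a direct transcription of Theorem \ref{garbfdd} and should present no essential difficulty; the novelty is only in verifying that ``$\omega$-skeleton with each piece having a skeleton'' is exactly the hypothesis needed to localize the finite-dimensional arguments.
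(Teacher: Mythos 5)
Your overall plan (run the Theorem \ref{garbfdd} induction, exploiting that u.c.d. gives exactly commuting $(1,0,1)$-extensions, and perturb the finite-dimensional images into skeleton pieces) starts out along the paper's lines, but it contains one false statement and one genuine gap. The false statement: a countable increasing sequence $\alpha_0<\alpha_1<\cdots$ can never be chosen ``cofinally'' in $\omega_1$, since $\omega_1$ has uncountable cofinality. The role of uncountable cofinality is exactly the opposite of what you assert: it forces $\lambda:=\sup_n\alpha_n<\omega_1$, so that the entire countable construction is trapped inside the single separable member $E_\lambda$ of the continuous chain. The construction never ``sees all of $E$'', nor does it need to; what it needs is that $E_\lambda$ is itself $1$-complemented in $E$.

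The substantive gap concerns the projection. The formula $\overline f(e)=\lim_n f_n\overline{f_n}\,\overline{\imath_n}(e)$ that you transcribe from Theorem \ref{garbfdd} only makes sense on the closed span $Z=\overline{\cup_n F_n}$ of the finite-dimensional pieces $F_n$ where your arrows land. In Theorem \ref{garbfdd} this causes no problem because those pieces form a skeleton of $E$ itself, so $Z=E$; here $Z$ is merely a separable closed subspace of $E_\lambda$, and an increasing sequence of $1$-complemented finite-dimensional subspaces does not give a pointwise-convergent sequence of projections on $E$ (this martingale-type failure is precisely why Theorem \ref{garbfdd} needed $E$ to have a skeleton in the first place). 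So your $\overline f$ is defined only on $Z$, and $Z$ has no reason to be complemented in $E$. The missing idea --- which the paper's sketch identifies as the whole point, ``to get the image of $Y$ complemented'' --- is to interleave the induction with an enumeration of the skeletons of the visited spaces, adding at each step a finite-dimensional piece $E_{\alpha_n,m_n}$ with $E_{\alpha_i,m_i}\subset E_{\alpha_j,m_j}$ for $i<j$, in such a way that $\overline{\cup_n E_{\alpha_n,m_n}}=E_\lambda$ exactly. Then the limit projection is defined on all of $E_\lambda$, and one concludes by composing with the norm-one projection of $E$ onto $E_\lambda$ furnished by the $\omega$-skeleton. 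Your diagonalization aims only at capturing the images $f_n(Y_n)$ inside skeleton members; it does not ensure this exhaustion of $E_\lambda$, and without it the argument does not close.
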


\noindent \emph{Sketch of proof}. Let $Y$ be a space with skeleton and let $E$ be a space of universal complemented disposition having a $\omega$-skeleton of spaces $E_\alpha$ so that each $E_\alpha$ admits a skeleton $E_{\alpha,n}$. The uncountable cofinality of $\omega_1$  will make he image of $Y$ obtained be lying in some of the separable spaces $E_\alpha$; and since these are $1$-complemented in $E$ the point is to obtain the copy of $Y$ complemented in some $E_\alpha$.

Let us proceed as in the proof of Theorem \ref{garbfdd}. A close examination of that proof reveals that the major part of the difficulties and the hard work in that proof was to get the image of $Y$ complemented, something that could be done because $E$ had skeleton: so one just needed to inductively add one by one the pieces of the skeleton to finally get a projection defined on all of $E$.

What has to be done now is to inductively add, one step each, finite dimensional pieces $E_{\alpha_n, m_n}$ with $E_{\alpha_i, m_i}\subset E_{\alpha_j, m_j}$ for $i<j$ in such a way that $E_{\sup \alpha_n} = \overline {\cup_n E_{\alpha_n, m_n}}$. This would provide $Y$ complemented
in $E_{\sup \alpha_n}$ and the argument is complete.\hfill $\square$\medskip

The assumption ``having a $\omega$-skeleton" is necessary. To show this, let us consider a different way to obtain spaces of universal complemented disposition: Let $Z$ be a space of almost universal complemented disposition and let $\U$ be a countably incomplete ultrafilter on $\N$. The ultrapower $Z_\U$ is quite obviously a space of universal complemented disposition. In particular, one thus has:

\begin{proposition}\label{example} Let $X$ be a dual separable Banach space. The space $\mathscr K(X)_\U$ is a space of universal complemented disposition.
\end{proposition}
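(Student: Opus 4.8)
The plan is to imitate the classical fact that a countably incomplete ultrapower of a space of almost universal disposition is of universal disposition, upgrading it to the double--arrow setting. Write $Z=\mathscr K(X)$, which is of almost universal complemented disposition by Theorem \ref{KX}, and note that this local property is inherited by $Z_\U$: feeding families of $Z$-level double arrows into the a.u.c.d.\ property of $Z$ shows that for every double arrow $(i,\overline i):F\leftrightarrows G$ between finite--dimensional spaces, every double arrow $(j,\overline j):F\leftrightarrows Z_\U$ and every $\varepsilon>0$ there is a $(1+\varepsilon,\varepsilon,1)$-arrow $(J_\varepsilon,\overline J_\varepsilon):G\leftrightarrows Z_\U$ for which the commuting relations $J_\varepsilon i=j$ and $\overline i\,\overline J_\varepsilon=\overline j$ hold \emph{exactly} (only the isometry constant and $\|\overline J_\varepsilon J_\varepsilon-1_G\|$ are approximate). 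The whole issue is therefore to pass from these approximate--constant extensions to a genuine $(1,0,1)$-arrow, and this is exactly where the countable incompleteness of $\U$ enters.

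Before invoking the ultrafilter I would simplify the projection. Any $\overline J:Z_\U\to G$ with $\overline i\,\overline J=\overline j$ is forced to have the form $\overline J=i\overline j+(1_G-i\overline i)\overline J$, whose second summand takes values in $W:=\ker\overline i$; conversely, setting $\overline J:=i\overline j+\overline J_W$ for \emph{any} $\overline J_W:Z_\U\to W$ makes $\overline i\,\overline J=\overline j$ automatic. This is the crucial point, because it lets me reuse the given projection $\overline j$ rather than try to represent it by a family of maps $Z\to F$ --- which is in general impossible, since $Z^*$ is nonseparable and $(Z^*)_\U\subsetneq(Z_\U)^*$. With this reduction, and using that $i\overline i$ is the norm--one projection of $G$ onto $i(F)$ along $W$, the requirement $\overline J J=1_G$ splits into $\overline j J=\overline i$ (equivalently $J(W)\subseteq\ker\overline j$) together with $\overline J_W J=1_G-i\overline i$. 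So it remains to produce an exact isometry $J:G\to Z_\U$ with $Ji=j$ and $J(W)\subseteq\ker\overline j$, and then a contraction $\overline J_W$ into the finite--dimensional space $W$ with $\overline J_W J=1_G-i\overline i$ and $\|i\overline j+\overline J_W\|\le1$; this last map, having finite--dimensional range, is manufactured from a $Z$-level family in the usual way.

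To produce the exact $J$ I would use that a countably incomplete ultrapower is $\aleph_1$-saturated. Fixing a basis $g_1,\dots,g_k$ of $G$, the unknowns $J(g_1),\dots,J(g_k)\in Z_\U$ must satisfy a countable set of conditions: the rationally indexed isometry equalities $\|\sum\lambda_s J(g_s)\|=\|\sum\lambda_s g_s\|$, the extension equalities $\sum_s c_{rs}J(g_s)=j(f_r)$ coming from $Ji=j$, and the kernel equalities $\overline j(J(g_s))=\overline i(g_s)$. Each finite subset of these is satisfiable to within any prescribed tolerance by the approximate extensions $(J_\varepsilon,\overline J_\varepsilon)$ of the first paragraph --- indeed $\overline j J_\varepsilon=\overline i\,\overline J_\varepsilon J_\varepsilon$ differs from $\overline i$ by at most $\|\overline J_\varepsilon J_\varepsilon-1_G\|\le\varepsilon$ --- so the corresponding type is finitely satisfiable and saturation realizes it exactly.

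The main obstacle is precisely this kernel condition $J(W)\subseteq\ker\overline j$ for a projection $\overline j$ that need not be of ultra form: the conditions $\overline j(J(g_s))=\overline i(g_s)$ are not visible in the pure Banach--space language of $Z_\U$, so one must realize the type in the structure enriched by the finite--rank projection $j\overline j$, i.e.\ by the splitting $Z_\U=j(F)\oplus\ker\overline j$. I expect the cleanest way to discharge this to be the ultraperturbation/local--reflexivity mechanism already used in Lemma \ref{equiv}: form the approximate extensions in a second countably incomplete ultrapower $(Z_\U)_{\mathscr V}$, where an exact double arrow exists trivially, and then push it back into $Z_\U$ along the splitting while keeping the constant equal to $1$, the finite codimension of $\ker\overline j$ being what permits the reflection to be carried out exactly rather than merely approximately. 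Once $J$ and $\overline J_W$ are in hand, $(J,\,i\overline j+\overline J_W)$ is the required $(1,0,1)$-arrow and the diagram commutes on the nose, so $Z_\U$ is of universal complemented disposition.
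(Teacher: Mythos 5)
Your proposal does not close the proof; what it mainly achieves is isolating the difficulty. Note first that the paper itself offers no argument: Proposition \ref{example} is preceded only by the assertion that an ultrapower of an a.u.c.d.\ space is ``quite obviously'' of universal complemented disposition, i.e.\ the authors have in mind the standard internal argument (write $j=[j_m]$, extend each $j_m$ using the a.u.c.d.\ property of $Z=\mathscr K(X)$ with tolerances $\varepsilon_m\to0$, and take $\U$-limits). You correctly observe that this argument produces a $(1,0,1)$-arrow $(J,\overline J)$ with $Ji=j$ whose projection component is \emph{internal}, so that $\overline i\,\overline J$ is internal and need not equal the given $\overline j$, which in general is not representable by a family of $Z$-level maps because $(Z^*)_\U\subsetneq(Z_\U)^*$. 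Your second paragraph (write $\overline J=i\overline j+\overline J_W$, reducing the problem to an exact isometry $J$ with $Ji=j$, $J(W)\subseteq\ker\overline j$, plus a norm-one completion $\overline J_W$) is correct and genuinely useful.

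However, each of the steps you then take has a gap. (a) The claim of your first paragraph --- that $Z_\U$ satisfies an a.u.c.d.-type property in which $J_\varepsilon i=j$ and $\overline i\,\overline J_\varepsilon=\overline j$ hold \emph{exactly} --- is essentially as strong as the statement to be proved, and it is not justified by ``feeding $Z$-level families into the a.u.c.d.\ property of $Z$'': any arrow so produced has internal projection component, and since $(Z^*)_\U$ sits as a closed subspace of $(Z_\U)^*$, a non-internal $\overline j$ stays at a fixed positive distance from every internal operator; hence one cannot even obtain $\|\overline i\,\overline J_\varepsilon-\overline j\|\le\varepsilon$ this way, so Lemma \ref{equiv} cannot be invoked to produce the exact commutation either. (b) As you yourself concede in the final paragraph, $\aleph_1$-saturation does not apply to the conditions $\overline j(Jg_s)=\overline i(g_s)$, since $\overline j$ is not part of the structure for which the ultrapower is saturated; so the argument of your third paragraph is void. (c) The repair you ``expect'' to work --- producing an exact arrow in $(Z_\U)_{\mathscr V}$ and pushing it back into $Z_\U$ ``exactly rather than merely approximately'' because $\ker\overline j$ has finite codimension --- is unsupported: local reflexivity/local complementation push-downs are intrinsically $(1+\varepsilon)$-isometric, and you give no mechanism for removing the $\varepsilon$ while simultaneously preserving $Ji=j$, $J(W)\subseteq\ker\overline j$ and the norm bound $\|i\overline j+\overline J_W\|\le1$; note also that this last bound couples the external $\overline j$ with the map $\overline J_W$ you propose to build internally, so it is not controlled ``in the usual way'' either. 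In short, you have located the real crux --- which the paper's ``quite obviously'' glosses over --- but the proposal leaves it unresolved at every point where it matters.
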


Recall that a Banach space $X$ is said to have the Uniform Approximation Property (UAP) \index{Uniform Approximation Property, UAP} when every ultrapower of $X$ has the BAP. It is clear that the UAP exactly means that $X$ has the $\lambda$-AP and there exists a ``control function'' $f:\N\to \N$ so that, given $F$ and $\lambda'>\lambda$, one can choose $T$ such that  $\text{rank}(T) \leq  f(\dim F)$ and $Tf=f$ for all $f\in F$, with $\|T\|\leq\lambda'$. Since $X^{**}$ is complemented in some ultrapower of $X$, when $X$ has the UAP then all even duals have the UAP. And since approximation properties pass from the dual to the space, when $X$ has the UAP all its duals have the UAP. See \cite[Section 7]{casazza} for details. Therefore, Banach spaces with the BAP but whose duals do not have the BAP
(see \cite[Section 7]{casazza}) admit ultrapowers without the BAP. \medskip

Regarding uniqueness, there are at least two (three under {\sf CH}) non isomorphic spaces of universal complemented disposition:

\begin{proposition}\label{example2}\label{underCHfin}$\;$\begin{enumerate}
\item  The spaces $\mathscr K(c_0)_\U$ and $\mathscr K_{\omega_1}(c_0)$ are not isomorphic.
\item  Under {\sf CH}, the spaces $\mathscr K(\R)_\U$, $\mathscr K_{\omega_1}( \R)$ and $\mathscr K_{\omega_1}(X)$ for $X$ a separable Banach space without BAP are not isomorphic
\end{enumerate}
\end{proposition}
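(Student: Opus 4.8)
The plan is to distinguish these spaces by exhibiting a Banach space structural invariant that one space possesses and another lacks, in each case relying on the construction devices already established. For part (1), the natural invariant to exploit is the Uniform Approximation Property and its stability under ultrapowers. First I would note that $\mathscr K(c_0)$ has skeleton by Lemma \ref{baplemma} (since $c_0$ has a $1$-FDD, hence skeleton, and separable dual), so $\mathscr K(c_0)$ has the BAP. The key point is that $\mathscr K(c_0)_\U$, being an ultrapower, has the BAP only if $\mathscr K(c_0)$ has the UAP; so I would argue that $\mathscr K(c_0)$ fails the UAP, whence its ultrapower $\mathscr K(c_0)_\U$ fails the BAP. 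In contrast, under no set-theoretic assumption $\mathscr K_{\omega_1}(c_0)$ can be shown to have the BAP via Proposition \ref{underCHfin}(2)-type reasoning or directly, or one argues the reverse: the cleanest asymmetry is that $\mathscr K_{\omega_1}(c_0)$ has a genuine structural feature (an $\omega_1$-skeleton by separable pieces) making it non-separable with the BAP while $\mathscr K(c_0)_\U$ is a non-separable ultrapower whose BAP fails. Since isomorphisms preserve the BAP, if exactly one of the two spaces has the BAP they cannot be isomorphic.

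For part (2), I would work under {\sf CH} and separate three spaces using two invariants in tandem: the BAP and separability-type/density considerations, refined by the approximation-property behaviour of duals. The space $\mathscr K_{\omega_1}(X)$ for $X$ separable without BAP cannot have the BAP: any isomorphic copy would force $X$, as a $1$-complemented subspace, to have the BAP, a contradiction; so this space is distinguished from the other two once they are shown to have the BAP. Under {\sf CH}, Proposition \ref{underCHfin}(2) gives that $\mathscr K_{\omega_1}(\R)$ has the BAP (since $\R$ has skeleton). To separate $\mathscr K(\R)_\U$ from $\mathscr K_{\omega_1}(\R)$, I would again invoke the UAP/ultrapower mechanism: $\mathscr K(\R)$ fails the UAP (it is the Kadec space $\mathscr K(\R)$, which is complementably universal for all separable BAP spaces, so it must contain complemented copies of spaces with arbitrarily bad uniformity constants, defeating any control function $f$), and therefore $\mathscr K(\R)_\U$ fails the BAP, whereas $\mathscr K_{\omega_1}(\R)$ has the BAP. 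Hence all three are pairwise non-isomorphic.

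The hard part will be establishing cleanly that $\mathscr K(c_0)$ (resp. $\mathscr K(\R)$) fails the UAP, since the definition of UAP requires a uniform control function $f(\dim F)$ bounding the rank of the approximating operators across all finite-dimensional subspaces. The strategy I would use is that, by Theorem \ref{garbfdd} and Corollary \ref{cuniv}, $\mathscr K(\R)$ contains isometric $1$-complemented copies of every separable space with skeleton; among these are spaces whose approximation requires arbitrarily large rank relative to dimension, so no single control function can serve. More concretely, one selects a sequence of finite-dimensional spaces $F_k$ of growing dimension together with complemented embeddings into $\mathscr K(\R)$ forcing the minimal rank of any good approximating projection to grow faster than any prescribed $f$; the complemented universality guarantees these pieces are present with the right projection constants. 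Once UAP failure is in hand, the ultrapower characterization of UAP (an ultrapower has BAP iff the space has UAP, as recalled in the paragraph preceding Proposition \ref{example2}) immediately yields that $\mathscr K(\R)_\U$ and $\mathscr K(c_0)_\U$ fail the BAP, and the rest is the routine observation that the BAP is an isomorphic invariant and that $1$-complemented subspaces inherit the BAP.

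I would close by remarking that the three invariants are genuinely independent here: the presence or absence of the BAP separates the ultrapower spaces from the $\omega_1$-constructed ones, while the BAP of $X$ itself (passing to the $1$-complemented copy) separates $\mathscr K_{\omega_1}(X)$ with $X$ lacking BAP from $\mathscr K_{\omega_1}(\R)$. The only set-theoretic input needed is {\sf CH}, and only in part (2), precisely to guarantee via Proposition \ref{underCHfin} that $\mathscr K_{\omega_1}(\R)$ has the BAP; part (1) requires no such assumption because the contrast there rests solely on the ultrapower/UAP mechanism together with the BAP of $\mathscr K_{\omega_1}(c_0)$.
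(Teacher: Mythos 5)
There are two genuine gaps, one in each part. Part (1) of the statement carries no set-theoretic hypothesis, but your argument hinges on $\mathscr K_{\omega_1}(c_0)$ having the BAP, and the only route to that fact is Proposition \ref{underCHfin}, which assumes \textsf{CH} (without \textsf{CH} the pieces $P_\alpha$ of the $\omega_1$-construction are not even separable, so the skeleton/BAP argument is unavailable); your own text concedes this and then hedges. So your proof of (1) is at best conditional on \textsf{CH}, contrary to the statement. The idea you are missing is the invariant the paper actually uses: by \cite{accgm3}, an infinite-dimensional ultrapower never contains a complemented copy of $c_0$, whereas $c_0$ sits $1$-complemented inside $\mathscr K_{\omega_1}(c_0)$ by the very construction (alternatively: every copy of $c_0$ in a space with $\omega$-skeleton is complemented, by Sobczyk-type arguments). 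This settles (1) outright, with no approximation-property machinery and no set theory.

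In part (2) the gap is that the three spaces must be separated \emph{pairwise}, but the BAP only separates $\mathscr K_{\omega_1}(\R)$ from the other two: both $\mathscr K(\R)_\U$ and $\mathscr K_{\omega_1}(X)$ fail the BAP. Your claim that $\mathscr K_{\omega_1}(X)$ ``is distinguished from the other two once they are shown to have the BAP'' is contradicted two sentences later by your own (correct) conclusion that $\mathscr K(\R)_\U$ fails the BAP, and the ``density considerations'' you allude to cannot close the hole, since under \textsf{CH} both spaces have density character $\aleph_1$. Here again the $c_0$-invariant does the work: under \textsf{CH}, $\mathscr K_{\omega_1}(X)$ has an $\omega$-skeleton (Proposition \ref{underCHfin}), so every copy of $c_0$ in it is complemented, while $\mathscr K(\R)_\U$, being an ultrapower containing $c_0$, contains no complemented copy of it. The one pair you do treat correctly, $\mathscr K(\R)_\U$ versus $\mathscr K_{\omega_1}(\R)$, is handled essentially as in the paper; only your sketch of the failure of the UAP for $\mathscr K(\R)$ (choosing finite-dimensional $F_k$ with bad constants) should be replaced by the clean argument: $\mathscr K(\R)$ contains complemented copies of every separable BAP space (Theorem \ref{garbfdd} plus Pe\l czy\'nski's theorem), in particular of one whose ultrapower fails the BAP, and since complemented subspaces inherit the BAP and ultrapowers of complemented subspaces are complemented in the ultrapower, $\mathscr K(\R)_\U$ cannot have the BAP.
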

\begin{proof}  In \cite{accgm3} it was proved that infinite dimensional ultrapowers never contain complemented copies of $c_0$, and thus $\mathscr K(X)_\U$ cannot contain $c_0$ complemented. Since any copy of $c_0$ must be complemented in any space with $\omega$-skeleton, thanks to Sobczyk's theorem, assertion (1) is clear. The space $\mathscr K(\R)_\U$ cannot have $\omega$-skeleton nor the BAP: otherwise, every ultrapower $X_\U$ of a separable space with BAP should have the BAP, which is false. The space $\mathscr K_{\omega_1}( \R)$ has the BAP and $\omega$-skeleton; and the space $\mathscr K_{\omega_1}(X)$ has $\omega$-skeleton but not BAP. All this proves (2).\end{proof}

Regarding universality results, observe that

 \begin{cor} No Banach space with $\omega$-skeleton can be universal for the class of spaces with density character $\aleph_1$.
 \end{cor}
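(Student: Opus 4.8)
The plan is to derive this as a direct corollary of Proposition \ref{garbskel} together with a cardinality obstruction coming from the fact that universal spaces for density character $\aleph_1$ would have to be ``too large'' to admit an $\omega$-skeleton. First I would recall what an $\omega$-skeleton is: a transfinite chain $(E_\alpha)_{\alpha<\omega_1}$ of $1$-complemented separable subspaces, increasing and continuous at limits, with $E=\overline{\bigcup_\alpha E_\alpha}$. The key structural feature I want to exploit is that every separable subspace of $E$, having finite or countable density, must by the uncountable cofinality of $\omega_1$ be absorbed into some single $E_\alpha$; thus $E$ is, in a precise sense, an increasing union of $\omega_1$ separable $1$-complemented pieces.

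The heart of the argument is a counting obstruction. Suppose toward a contradiction that $E$ has an $\omega$-skeleton and is universal for the class of spaces of density character $\aleph_1$. The number of separable spaces (up to isometry) is exactly $\mathfrak{c}$, whereas the number of non-isometric spaces of density $\aleph_1$ is $2^{\aleph_1}$. The plan is to show that a space with $\omega$-skeleton can contain isometric (or even just isomorphic) copies of at most $\mathfrak{c}$ many non-isomorphic spaces, because any embedded copy of a density-$\aleph_1$ space $Y$ lands, after the cofinality argument, inside a piece built from a countable tower of separable spaces, and the data determining such an embedding is parametrized by a set of size $\mathfrak{c}$. Since $2^{\aleph_1}>\mathfrak{c}$ in ZFC (indeed $2^{\aleph_1}\geq\aleph_2>\aleph_1$, and one only needs strict inequality with $\mathfrak c$, which holds because $\mathfrak c=2^{\aleph_0}<2^{\aleph_1}$ always), there must exist a density-$\aleph_1$ space not embeddable in $E$, contradicting universality.

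Concretely I would organize it as follows. First, fix an $\omega$-skeleton $(E_\alpha)$ and let $Y$ be any space of density $\aleph_1$ with an isometric copy in $E$; by the continuity and uncountable cofinality of the chain, the separable building blocks of $Y$ accumulate into some $E_\alpha$, so the isometric type of the copy of $Y$ is determined by a map into a separable space, and the collection of such isometric types has cardinality at most $\mathfrak c$. Second, invoke the standard fact that there are $2^{\aleph_1}$ pairwise non-isometric (indeed non-isomorphic) Banach spaces of density character $\aleph_1$ --- for instance the $C(K)$ spaces over the $2^{\aleph_1}$ many non-homeomorphic compacta of weight $\aleph_1$, or $\aleph_1$-generated subspaces of $\ell_\infty/c_0$. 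Third, compare cardinalities: $\mathfrak c<2^{\aleph_1}$ forces a density-$\aleph_1$ space outside the range of embeddings, giving the contradiction.

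The main obstacle I expect is making rigorous the claim that a copy of a density-$\aleph_1$ space must be captured inside a single separable piece $E_\alpha$, and then that the number of isometric (or isomorphic) types so obtained is genuinely bounded by $\mathfrak c$ rather than by something larger. The subtlety is that while each separable \emph{subspace} lands in some $E_\alpha$, a copy of a nonseparable $Y$ need not land in any one $E_\alpha$; what does hold is that a \emph{separable} subspace of $Y$ lands in some $E_\alpha$, and one must assemble these via the uncountable cofinality to conclude the whole copy of $Y$ is confined to a sub-skeleton of size $\aleph_1$ whose isometric type is coded by $\mathfrak c$-much data. Pinning down this coding --- essentially arguing that a density-$\aleph_1$ subspace of a space with $\omega$-skeleton is itself determined, up to isomorphism, by countably-much gluing data at each of $\omega_1$ stages, hence by a set of size $\mathfrak c^{\aleph_1}=\mathfrak c$ under no extra hypothesis only if one is careful (in ZFC one has $\mathfrak c^{\aleph_1}=2^{\aleph_1}$, so this route is delicate) --- is where I would spend the most care, and I would prefer to route the counting through the number of separable isometric types ($\mathfrak c$) combined with a reconstruction argument, rather than a naive product count.
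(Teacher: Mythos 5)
There is a genuine gap, and it sits exactly where you flagged it: the claim that a space with $\omega$-skeleton can contain at most $\mathfrak c$ isometric (or isomorphic) types of density-$\aleph_1$ subspaces is unsupported, and no counting argument of the kind you sketch can supply it. A space with $\omega$-skeleton has density character $\aleph_1$, and a space of density $\aleph_1$ has up to $\aleph_1^{\aleph_1}=2^{\aleph_1}$ closed subspaces, so nothing in the cardinal arithmetic caps the number of subspace types at $\mathfrak c$. Worse, the paper itself records (in the discussion following Proposition \ref{example2}, via Parovi\v{c}enko's theorem) that under {\sf CH} the space $\ell_\infty/c_0$ has density character $\aleph_1$ and contains isometric copies of \emph{all} spaces of density character $\aleph_1$, of which there are $2^{\aleph_1}$ pairwise non-isomorphic ones. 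So a density-$\aleph_1$ space can perfectly well host $2^{\aleph_1}$ non-isomorphic subspaces; any proof of the corollary must therefore exploit the skeleton structurally, not just count. Your own closing paragraph concedes that the honest coding gives $\mathfrak c^{\aleph_1}=2^{\aleph_1}$, and the ``reconstruction argument'' that would repair this is precisely the missing (and, in view of the $\ell_\infty/c_0$ example, implausible) step.

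The paper's proof is entirely different and much more elementary. By the uncountable cofinality of $\omega_1$, every separable subspace of a space $E$ with $\omega$-skeleton $(E_\alpha)_{\alpha<\omega_1}$ lies inside some single $1$-complemented separable piece $E_\alpha$; in particular any copy of $c_0$ inside $E$ lies in some $E_\alpha$, where Sobczyk's theorem yields a projection of $E_\alpha$ onto it, and composing with the norm-one projection $E\to E_\alpha$ shows that \emph{every copy of $c_0$ in a space with $\omega$-skeleton is complemented} (this fact is stated in the proof of Proposition \ref{example2}). On the other hand, there exist spaces of density character $\aleph_1$ containing uncomplemented copies of $c_0$ (for instance, Johnson--Lindenstrauss-type subspaces of $\ell_\infty$ spanned by $c_0$ together with the characteristic functions of an almost disjoint family of cardinality $\aleph_1$). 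Such a space cannot embed isomorphically into any space with $\omega$-skeleton, which kills universality outright. If you want to salvage your write-up, replace the cardinality comparison by this single structural obstruction: exhibit one density-$\aleph_1$ space with an uncomplemented $c_0$ and invoke the Sobczyk argument.
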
\begin{proof} As it has been said, every copy of $c_0$ must be complemented in a space with $\omega$-skeleton; and thus, spaces with density character
$\aleph_1$ but containing uncomplemented copies of $c_0$ cannot embed in a space with $\omega$-skeleton.\end{proof}

Thus, there is no point in asking if a space of universal complemented disposition contains isometric copies of all spaces with density character
at most $\aleph_1$ (since one must exclude those with $\omega$-skeleton). It is quite curious that the spaces $K_\U$ with $K$ of almost universal complemented disposition contain, at least under {\sf CH}, isometric copies of all spaces with density character $\aleph_1$: indeed, $K$ contains $C[0,1]$, hence $K_\U$ contains  $C[0,1]_\U$ which is, under {\sf CH}, isomorphic to $\ell_\infty/c_0$ by \cite[Proposition
    2.4.1]{bank}; and this last space is universal for all spaces with density character $\aleph_1$ by Parovi\v{c}enko's theorem
    (\cite{paro}, \cite[p.81]{walker}). A different thing is to ask if space of universal complemented disposition must contain isometric copies of all spaces with $\omega$-skeleton. See Proposition \ref{partanswer} and Problem (6).

\section{Spaces of universal complemented disposition for separable spaces}

In the same way that the notion of space of universal disposition can be extended to ``space of universal disposition with respect to the class of separable spaces", we can define:

\adef A Banach space $E$ will be called of $\omega$-universal complemented disposition if given a double
arrow $(i, \overline i) : S_1 \leftrightarrows S_2$ between separable spaces and a double arrow $(j, \overline j): S_1
\leftrightarrows E$ there exists a double arrow $(J, \overline J): S_2 \leftrightarrows E$ making a commutative diagram
$$\xymatrix{
 S_1 \ar[rr]_{i} \ar@<-1ex>[rd]_{j} && S_2 \ar@<-1ex>[ll]_{\overline i} \ar[ld]_{\overline J}\\
&E \ar@<-0.5ex>[lu]_{\overline j}  \ar@<-1ex>[ru]_{J}}
$$
\zdef

One has:

\begin{proposition}\label{univomega} Every Banach space can be isometrically embedded as a $1$-complemented subspace of a space of $\omega$-universal
    complemented disposition.
\end{proposition}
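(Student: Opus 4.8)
\textbf{The plan} is to mimic the $\omega_1$-step push-out construction of Proposition \ref{univfin}, replacing finite-dimensional arrows with separable ones throughout. Starting from an arbitrary Banach space $X$, set $P_0 = X$ and iterate for $\alpha < \omega_1$ the push-out
$$\begin{CD}
\ell_1(I_\alpha, S_1^u)@>{\oplus u}>> \ell_1(I_\alpha, S_2^u)\\
@V{\sum d}VV @VVV\\
P_\alpha@>>{u_\alpha}> P_{\alpha+1},
\end{CD}$$
where now $I_\alpha$ indexes all $(1,0,1)$-arrows $d: S_1^u \leftrightarrows P_\alpha$ from a separable space $S_1^u$ into $P_\alpha$, each repeated once for every $(1,0,1)$-arrow $u: S_1^u \leftrightarrows S_2^u$ between separable spaces. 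At limit ordinals take $P_\alpha = \overline{\cup_{\beta<\alpha}P_\beta}$, and define $\mathscr K_{\omega_1}^\omega(X) = \overline{\cup_{\alpha<\omega_1}P_\alpha}$.

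\emph{The key steps} are as follows. First, that each $u_\alpha$ is an into isometry and $P_\alpha$ is $1$-complemented in $P_{\alpha+1}$ follows verbatim from Lemma \ref{isom} and the ``Moreover'' part of Lemma \ref{amostdpo}, since those results are stated for arbitrary (not merely finite-dimensional) Banach spaces as soon as the relevant maps are into isometries admitting norm-one projections; this gives that $X$ is $1$-complemented in $\mathscr K_{\omega_1}^\omega(X)$. Second, to verify the extension property, take a double arrow $(\delta, \overline\delta): S_1 \leftrightarrows S_2$ between separable spaces and a double arrow $(f, \overline f): S_1 \leftrightarrows \mathscr K_{\omega_1}^\omega(X)$. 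The essential point is that $S_1$ is separable, hence its image $f(S_1)$ is a separable subset of $\overline{\cup_\alpha P_\alpha}$; because $\omega_1$ has uncountable cofinality, a countable dense subset of $f(S_1)$ is already captured at some countable stage, so $f(S_1) \subset P_\alpha$ for some $\alpha < \omega_1$. Then $(f, \overline f_{|P_\alpha}): S_1 \leftrightarrows P_\alpha$ is one of the arrows $d$ appearing in the diagram defining $P_{\alpha+1}$, and it is extended through the arrow $(\delta, \overline\delta)$ to a double arrow $S_2 \leftrightarrows P_{\alpha+1}$. Exactly as in Proposition \ref{univfin}, one obtains
$$\exists \beta\;\;\forall \alpha \geq \beta \quad \exists (f_\alpha, \overline{f_\alpha}): S_2 \leftrightarrows P_{\alpha+1}: f_\alpha \delta = f \quad\mathrm{and}\quad \overline\delta\;\overline{f_\alpha} = \overline{f_{P_\alpha}},$$
which the same ultraperturbation argument converts into a genuine double arrow $S_2 \leftrightarrows \mathscr K_{\omega_1}^\omega(X)$ extending $(f, \overline f)$.

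\emph{The main obstacle} is the step $f(S_1) \subset P_\alpha$: unlike the finite-dimensional case of Proposition \ref{univfin}, where a finite set of generators is trivially absorbed at a countable stage, here one must argue that an entire \emph{separable} subspace lands inside a single $P_\alpha$. This rests on the cofinality argument above (a countable union of countable ordinals is bounded below $\omega_1$), and it is precisely the reason the construction must run for $\omega_1$ steps rather than $\omega$ steps: a single separable arrow cannot in general be captured at a finite stage. A secondary point requiring care is that the index sets $I_\alpha$ now range over \emph{all} separable arrows into $P_\alpha$ rather than a countable dense family, so no density or approximation lemmata are needed (the disposition is exact, $\varepsilon = 0$), but one must confirm that the push-out of Lemma \ref{amostdpo} and its projection behave correctly for $\ell_1$-amalgamations indexed by sets of size $\mathfrak c$; this is purely formal, since the push-out construction and its norm-one projection are functorial in the index set.
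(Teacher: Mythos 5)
Your proposal is correct and follows essentially the same route as the paper: the paper's proof is precisely the construction of Proposition \ref{univfin} with ``finite dimensional'' replaced by ``separable'', the key point being, as you identify, that no countable set is cofinal in $\omega_1$, so any arrow from a separable space into the union lands in some $P_\alpha$. Your write-up merely makes explicit the details (closedness of the continuous chain, the $\ell_1$-amalgamation over large index sets) that the paper leaves implicit.
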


The construction is immediate after that in Proposition \ref{univfin} just replacing ``finite dimensional" by ``separable". Let us call
$\mathscr K_{\omega_1}^{\mathfrak S}(X)$ the resulting space. It is of $\omega$-universal complemented disposition exactly as in the proof of Proposition \ref{univfin}, which remains valid since
no countable set is cofinal in $\omega_1$, and thus any operator from a separable space into $\mathscr K_{\omega_1}^{\mathfrak S}(X)$ actually has
its image contained in some space $P_\alpha$ for some $\alpha<\omega_1$. The $\omega$-version of Theorem \ref{uniqGarbu} is:

\begin{teor}\label{uniqueomega}\label{uniqsep} Let  $U, V$ be two spaces of $\omega$-universal complemented
disposition having a $\omega$-skeleton. Let  $\imath: A \to B$ be an isometry between two separable $1$-complemented subspaces $A\subset U$ and $B\subset V$. There exists an isometry $\tau: U \to V$ such that $\tau_{|A} = \imath$. In particular, all spaces of $\omega$-universal complemented disposition with $\omega$-skeleton are isometric.\end{teor}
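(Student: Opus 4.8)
The plan is to run a back-and-forth (Cantor--Schr\"oder--Bernstein style) argument exactly as in the proof of Theorem \ref{uniqGarbu}, but now working in the category of \emph{separable} spaces and double arrows rather than finite-dimensional ones, and crucially using the stronger \emph{exact} extension property available here. Since $U$ and $V$ are of $\omega$-universal complemented disposition, every double arrow $(j,\overline j): S_1 \leftrightarrows U$ extends along any double arrow $(i,\overline i): S_1\leftrightarrows S_2$ between separable spaces to a genuine double arrow $(J,\overline J): S_2\leftrightarrows U$ \emph{with no $\varepsilon$}; this is the feature that lets us produce an isometry $\tau$ with $\tau_{|A}=\imath$ on the nose rather than merely $\e$-close, as in the finite-dimensional a.u.c.d. case. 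Let $(U_n)$ and $(V_n)$ be the $\omega$-skeletons, so that $(u_n,\overline{u_n}): U_n\leftrightarrows U$ and $(v_n,\overline{v_n}): V_n\leftrightarrows V$ are $(1,0,1)$-arrows with $U=\overline{\bigcup_n U_n}$ and $V=\overline{\bigcup_n V_n}$, the $U_n, V_n$ themselves separable and $1$-complemented.

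First I would set up the induction. Start from the double arrow $(\imath,\imath^{-1}\overline{\imath_B}): A\leftrightarrows V$ obtained from the given isometry $\imath$ and the projection onto $B$. Enlarge the domain to absorb $U_1$: form the separable space $S_1 = \overline{A + U_1}$, equipped with the $1$-complemented embedding into $U$, and use the $\omega$-u.c.d. property of $V$ to extend the double arrow $A\leftrightarrows V$ to an honest double arrow $(\imath_1,\overline{\imath_1}): S_1\leftrightarrows V$. Now reverse roles: from $(\imath_1^{-1},\imath_1\,\overline{u}): \imath_1(S_1)\leftrightarrows U$ and the separable space $\overline{\imath_1(S_1)+V_1}$, use the $\omega$-u.c.d. property of $U$ to extend to a double arrow $(\jmath_1,\overline{\jmath_1})$ defined on a separable subspace of $V$ containing $V_1$. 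Iterating, the odd steps guarantee $U_n$ enters the domain of the map going $U\to V$ and the even steps guarantee $V_n$ enters the domain of the map going $V\to U$, while at each stage the newly extended map is a \emph{genuine} $(1,0,1)$-arrow that agrees exactly with the previous one on the previously-constructed separable piece. Because $\bigcup_n U_n$ is dense in $U$ and $\bigcup_n V_n$ is dense in $V$, the compatible isometries assemble in the limit into mutually inverse isometries $\tau: U\to V$ and $\tau^{-1}: V\to U$, with $\tau_{|A}=\imath$.

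The point I expect to be the main obstacle---and the reason this is genuinely cleaner than Theorem \ref{uniqGarbu}---is the coherence of the back-and-forth at limit, i.e. verifying that the sequence of extensions really does stabilize to a well-defined surjective isometry. Two things must be checked carefully. First, at each odd (resp. even) step one must arrange that the separable domain of the next extension \emph{contains} both the image of the previous map and the next skeleton space $U_{n+1}$ (resp. $V_{n+1}$); this is where the uncountable cofinality of the skeleton indexing is irrelevant but the separability of each $U_n, V_n$ is essential, since $\overline{(\text{previous domain}) + U_{n+1}}$ must again be separable for the $\omega$-u.c.d. property to apply. Second, since every extension is exact (not approximate), the maps are literally compatible, so no summability condition $\sum\e_n<\infty$ and no final ultraperturbation is needed: the limit map is defined on the dense set $\bigcup_n U_n$ by $\tau(x)=\lim_k \imath_k(x)$, where for fixed $x$ the sequence is eventually constant, and it extends by continuity to an isometry of $U$ into $V$. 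Surjectivity follows because the even steps force every $V_n$ into the range of $\tau$, so the range is dense and, being the continuous isometric image of a Banach space, closed; hence $\tau$ is onto.

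For the final ``In particular'' it then suffices to note that $U$ and $V$ both contain a one-dimensional (hence separable and trivially $1$-complemented) subspace on which one may choose any linear isometry $\imath$; applying the first part gives a surjective isometry $\tau: U\to V$, so all spaces of $\omega$-universal complemented disposition with $\omega$-skeleton are isometric.
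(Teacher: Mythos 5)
Your overall strategy---an exact back-and-forth with no perturbation, exploiting that the $\omega$-u.c.d. extensions are genuine double arrows rather than $\varepsilon$-approximate ones---is precisely what the paper intends (its proof is a one-line remark to this effect). But your execution contains a genuine gap rooted in a misreading of what a $\omega$-skeleton is. In this paper a $\omega$-skeleton is \emph{not} a countable sequence of subspaces: it is a continuous chain $(E_\alpha)_{\alpha<\omega_1}$ of \emph{separable} subspaces indexed by the uncountable ordinal $\omega_1$ (the ``$\omega$'' refers to the separability of the building blocks, not to the length of the chain). Indeed, as the paper observes via the Johnson--Szankowski theorem, a space of $\omega$-universal complemented disposition must have density character at least $\aleph_1$, so it is never separable. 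Consequently your induction indexed by $n\in\N$, producing $\tau$ on $\overline{\bigcup_n U_n}$, can only ever define $\tau$ on a separable subspace of $U$ and can never exhaust $U$ or $V$. The argument must be a \emph{transfinite} back-and-forth of length $\omega_1$, with unions (and closures) taken at countable limit ordinals---which is unproblematic exactly because the extensions are exact and the chains are continuous---and your remark that ``the uncountable cofinality of the skeleton indexing is irrelevant'' is backwards: uncountable cofinality is what makes the whole scheme work.

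A second, related flaw: at each step you enlarge the domain to $S_1=\overline{A+U_1}$ and assert it is ``equipped with the $1$-complemented embedding into $U$.'' The closed sum of two $1$-complemented subspaces need not be $1$-complemented, and this complementation of the enlarged domain in $U$ is exactly what you need in order to reverse direction (the backward step requires a double arrow from the image back into $U$, i.e.\ a norm-one projection of $U$ onto the current domain). The correct fix again uses uncountable cofinality: since $A$ is separable and $(U_\alpha)_{\alpha<\omega_1}$ is an increasing continuous chain of separable subspaces with dense union, a countable dense subset of $A$ is absorbed by some $U_\beta$ with $\beta<\omega_1$, hence $A\subset U_\beta$; one then extends over the skeleton space $U_\beta$ itself, which \emph{is} $1$-complemented in $U$ and contains $A$ as a $1$-complemented subspace (restrict the projection $U\to A$). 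With these two repairs---transfinite length $\omega_1$ and enlargement inside skeleton spaces rather than ad hoc sums---your argument becomes the intended proof.
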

\begin{proof} The proof is much simpler than that of Theorem \ref{uniqGarbu} since no approximation of perturbation is required: just a straightforward back-and-forth argument. \end{proof}

Under {\sf CH}, the space $\mathscr K_{\omega_1}^{\mathfrak S}(X)$ has an $\omega$-skeleton when $X$ has an $\omega$-skeleton; and thus all the spaces of $\omega$-universal complemented disposition with $\omega$-skeleton are isometric to $\mathscr K_{\omega_1}^{\mathfrak S}(\R)$. Let us call this
unique space $\mathcal K_{\omega_1}^{\mathfrak S}$ from now on. Since $X$ is $1$-complemented in $K_{\omega_1}^{\mathfrak S}(X)$, under {\sf CH}, $\mathcal K_{\omega_1}^{\mathfrak S}$ contains isometric $1$-complemented copies of every Banach space with $\omega$-skeleton; i.e.,

\begin{proposition}\label{partanswer} Under {\sf CH}, a space  of $\omega$-universal complemented disposition with $\omega$-skeleton contains isometric $1$-complemented copies of all spaces
with $\omega$-skeleton.\end{proposition}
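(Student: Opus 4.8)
The plan is to combine the two tools that have already been set up in the paper: the uniqueness theorem for $\omega$-universal complemented disposition spaces (Theorem~\ref{uniqsep}) and the explicit construction $\mathscr K_{\omega_1}^{\mathfrak S}(X)$, for which the ``$1$-complemented'' embedding property is built in by design. First I would fix a Banach space $Y$ with an $\omega$-skeleton and let $E$ be an arbitrary space of $\omega$-universal complemented disposition that has an $\omega$-skeleton. Since we are working under {\sf CH}, the construction $\mathscr K_{\omega_1}^{\mathfrak S}(Y)$ from Proposition~\ref{univomega} produces a space of $\omega$-universal complemented disposition which, by the remarks immediately preceding the statement, inherits an $\omega$-skeleton from the $\omega$-skeleton of $Y$. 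By construction, $Y$ sits as a $1$-complemented subspace of $\mathscr K_{\omega_1}^{\mathfrak S}(Y)$, witnessed by a genuine double arrow $(j,\overline j):Y\leftrightarrows \mathscr K_{\omega_1}^{\mathfrak S}(Y)$ with $\overline j\, j = 1_Y$.

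Next I would invoke the uniqueness Theorem~\ref{uniqsep}. Both $E$ and $\mathscr K_{\omega_1}^{\mathfrak S}(Y)$ are spaces of $\omega$-universal complemented disposition with $\omega$-skeleton, so that theorem supplies a surjective isometry $\Phi:\mathscr K_{\omega_1}^{\mathfrak S}(Y)\to E$ (applying the theorem, for instance, to the identity isometry between a common one-dimensional $1$-complemented subspace, or simply using the final sentence that asserts all such spaces are isometric). The composite $(\Phi j,\ \overline j\,\Phi^{-1}):Y\leftrightarrows E$ is then the desired double arrow: $\Phi j$ is an into isometry because $\Phi$ is an isometry and $j$ is, and $\overline j\,\Phi^{-1}$ is a norm-one projection onto its range since $(\overline j\,\Phi^{-1})(\Phi j)=\overline j\, j=1_Y$ and each factor is contractive. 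Hence $Y$ embeds isometrically as a $1$-complemented subspace of $E$.

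The only genuine content, and the step I expect to carry the weight, is the existence of the isometry $\Phi$ in Theorem~\ref{uniqsep}: that back-and-forth argument is where the $\omega$-universal complemented disposition property and the uncountable cofinality of $\omega_1$ are really used, and where {\sf CH} enters through the existence of an $\omega$-skeleton for $\mathscr K_{\omega_1}^{\mathfrak S}(Y)$. Everything after that is bookkeeping. I would therefore phrase the proof as: fix $Y$; build $\mathscr K_{\omega_1}^{\mathfrak S}(Y)$ and note it is a space of $\omega$-universal complemented disposition with $\omega$-skeleton containing $Y$ as a $1$-complemented subspace; apply Theorem~\ref{uniqsep} to transport this $1$-complemented copy isometrically into the given space $E$. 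One subtlety worth flagging explicitly is that the uniqueness theorem is stated for $1$-complemented subspaces $A,B$, so to get a clean statement about the abstract space $E$ I would either apply it with $A=B$ a line spanned by a norm-one vector (obtaining the global isometry $\Phi$ and then transporting the built-in copy of $Y$ afterward), or directly take $A=Y\subset \mathscr K_{\omega_1}^{\mathfrak S}(Y)$ and any isometric $1$-complemented copy of $Y$ inside $E$ if one is already known to exist; the former route is cleaner since it does not presuppose a copy of $Y$ in $E$.
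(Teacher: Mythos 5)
Your proposal is correct and follows essentially the same route as the paper: under {\sf CH} the space $\mathscr K_{\omega_1}^{\mathfrak S}(Y)$ has an $\omega$-skeleton whenever $Y$ does, contains $Y$ as a $1$-complemented subspace by construction, and is isometric to the given space $E$ by the uniqueness Theorem~\ref{uniqsep}, so the $1$-complemented copy of $Y$ transports to $E$. Your flagged subtlety (applying the uniqueness theorem via a one-dimensional subspace to get the global isometry) is handled in the paper simply by its final sentence asserting that all such spaces are isometric, which is exactly the route you chose.
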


Observe that, even outside {\sf CH}, spaces of $\omega$-universal complemented disposition contain isometric $1$-complemented copies of all separable spaces. It therefore follows from the Johnson-Szankowski theorem \cite{johnszank1} that spaces of $\omega$-universal complemented disposition must have density character at least $\aleph_1$.
Spaces of universal complemented disposition need not be of $\omega$-universal complemented disposition: indeed, ultrapowers of spaces of almost universal disposition are of  universal complemented disposition, although they cannot be of $\omega$-universal complemented disposition since they cannot contain complemented copies of $c_0$.

\section{Open ends}

We leave open a few questions which appeared during the course of this paper.

\begin{enumerate}

\item Is the Kadec space $\mathcal K$ of \cite{kade} of almost universal complemented disposition. Equivalently, is it isometric to $\mathscr  K(\R)$?

\item Does a  space of almost universal complemented disposition contain isometric $1$-complemented copies of all finite-dimensional Banach spaces?

\item Does a separable space of almost universal complemented disposition contain isometric $1$-complemented copies of all separable spaces with $1$-FDD?
Without separability assumption the  answer is no. On the other hand, the spaces $\mathscr K(X)$ are $1$-complementably universal for spaces with $1$-FDD, regardless of whether they have or not skeleton, since $\mathscr K(X)$ contains a $1$-complemented copy of $\mathscr K(\R)$.
\item Do separable spaces of universal complemented disposition exist?
\item Is there a continuum of non-isomorphic spaces of universal complemented disposition? The corresponding question of the existence of many different spaces of universal disposition has been treated, although not completely solved, in \cite{accgm2} and \cite{castsimo}. \item Does a space of universal complemented disposition contain isometric copies of all spaces with $\omega$-skeleton? Observe that a space of $\omega$-universal complemented disposition contains isometric copies of all spaces with $\omega$-skeleton.
    \item Prus shows in \cite[Thm. 2.1]{prus} that there is a reflexive separable space with basis that is complementably universal for all separable super-reflexive spaces with BAP. This suggest the possibility of obtaining other spaces of ``almost universal complemented disposition with respect to certain subclasses of finite dimensional spaces" (see also \cite[Prop. 3.1]{castsimo}). However, we cannot see how the methods in this paper could cover the reflexive case. See also \cite{os}
\end{enumerate}

\section{Appendix: Basic constructions}\label{basic}

We begin with the observation in the Introduction that the spaces of Pe\l cy\'nski, Kadec and Wojtaszczyk are isomorphic. This is consequence of
Pe\l czy\'nski decomposition method.

\begin{lema}\label{mix} Let $\mathcal M$ be a class of quasi-Banach spaces such that for some $0<p\leq \infty$ it is closed under $\ell_p$-sums. There is only one complementably universal member for $\mathcal M$, up to isomorphisms.
\end{lema}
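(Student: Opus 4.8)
The plan is to apply the classical Pe\l czy\'nski decomposition method. Suppose $U$ and $V$ are two complementably universal members of $\mathcal M$; the goal is to show $U \cong V$. Since $U$ is complementably universal and $V \in \mathcal M$, the space $V$ is isomorphic to a complemented subspace of $U$, so I may write $U \cong V \oplus A$ for some $A$. Symmetrically, $U$ is isomorphic to a complemented subspace of $V$, giving $V \cong U \oplus B$ for some $B$. The hypothesis that $\mathcal M$ is closed under $\ell_p$-sums (for some fixed $0<p\leq\infty$) is exactly what is needed to carry out the infinite-swindle that makes these two-sided embeddings into an honest isomorphism.

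First I would record the standing consequences of closure under $\ell_p$-sums. Because $\mathcal M$ is closed under (countable) $\ell_p$-sums, for any $W \in \mathcal M$ the space $\ell_p(W)=(W\oplus W\oplus\cdots)_{\ell_p}$ again lies in $\mathcal M$, and it satisfies the two absorption identities $\ell_p(W)\cong W\oplus \ell_p(W)$ and $\ell_p(W)\cong \ell_p(\ell_p(W))$, both via the obvious shift/regrouping isomorphisms. In particular, applying complementable universality of $U$ to the space $\ell_p(V)\in\mathcal M$ and writing things out, one gets enough room to absorb the error terms $A$ and $B$.

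Next I would run the swindle. From $U\cong V\oplus A$ one forms $\ell_p(U)\cong \ell_p(V\oplus A)\cong \ell_p(V)\oplus \ell_p(A)$, and iterating the decomposition $U\cong V\oplus A$ inside $\ell_p(U)$ lets me slide the summands. Concretely, set $\widetilde U=\ell_p(U)$; then $\widetilde U\cong U\oplus \widetilde U$, and combining with $U\cong V\oplus A$ and $V\cong U\oplus B$ gives the chain $U\cong V\oplus A\cong (U\oplus B)\oplus A\cong U\oplus(A\oplus B)$, whence $U\cong U\oplus C$ with $C=A\oplus B$, and likewise $V\cong V\oplus C$. Feeding these back through $U\cong V\oplus A$, $V\cong U\oplus B$ and using that the relevant $\ell_p$-sums stay inside $\mathcal M$ yields $U\cong V$ after telescoping. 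The routine bookkeeping — checking that every summand produced remains in $\mathcal M$ so that universality can legitimately be invoked at each stage, and that the regroupings are genuine isomorphisms in the quasi-Banach category — is exactly what the $\ell_p$-closure hypothesis delivers.

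The main obstacle I anticipate is not conceptual but one of care in the quasi-Banach setting: one must ensure the $\ell_p$-sums used in the swindle are the \emph{same} $p$ under which $\mathcal M$ is assumed closed, so that the absorbing space $\ell_p(U)$ genuinely belongs to $\mathcal M$ and complementable universality applies to it. The classical decomposition method is usually phrased for Banach spaces with $\ell_p$ or $c_0$ sums; here the statement already fixes a single $0<p\leq\infty$, so the only thing to verify is that the standard isomorphisms $\ell_p(W)\cong W\oplus\ell_p(W)\cong\ell_p(\ell_p(W))$ and the finite shuffles $A\oplus B\cong B\oplus A$ are available and uniformly bounded in the quasi-norm, which they are. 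Once that is in place the argument is the textbook Pe\l czy\'nski swindle.
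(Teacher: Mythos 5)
Your setup lists the right ingredients, but the chain of isomorphisms you actually execute does not prove the lemma. The relations you derive concretely --- $U\cong U\oplus C$ and $V\cong V\oplus C$ with $C=A\oplus B$ --- follow formally from $U\cong V\oplus A$ and $V\cong U\oplus B$ alone, with no use whatsoever of the $\ell_p$-closure hypothesis; they hold for \emph{any} pair of spaces each complemented in the other. Such relations cannot ``telescope'' to $U\cong V$: by Gowers' solution to the Schroeder--Bernstein problem there is a Banach space $X$ with $X\cong X\oplus X\oplus X$ but $X\not\cong X\oplus X$, and then $U=X$, $V=X\oplus X$ satisfy every identity in your concrete chain ($A=B=X$, $C=X\oplus X$, $U\oplus C\cong U$, $V\oplus C\cong V$) while $U\not\cong V$. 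This is why every attempt to regroup your identities only returns tautologies, e.g.\ $U\oplus C\cong (U\oplus B)\oplus A\cong V\oplus A\cong U$; finite shuffling of the summands $A$ and $B$ can never close the argument.

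The missing step --- the only place where closure under $\ell_p$-sums genuinely enters --- is to apply complementable universality to an \emph{infinite} sum. Since $\ell_p(V)\in\mathcal M$, it is complemented in $U$, say $U\cong \ell_p(V)\oplus D$; then the shift isomorphism $\ell_p(V)\cong V\oplus\ell_p(V)$ gives $U\cong V\oplus\ell_p(V)\oplus D\cong V\oplus U$, and symmetrically $V\cong U\oplus V$, whence $U\cong V\oplus U\cong U\oplus V\cong V$. You mention applying universality of $U$ to $\ell_p(V)$ in your second paragraph, but it never appears in the deduction, and without it the argument collapses to the insufficient finite identities above. For comparison, the paper's proof is a two-step version of the same absorption: first a swindle between $\mathcal U$ and $\ell_p(\mathcal U)$ (which are complemented in each other precisely because $\ell_p(\mathcal U)\in\mathcal M$, and $\ell_p(\ell_p(\mathcal U))\simeq\ell_p(\mathcal U)$) gives $\mathcal U\simeq\ell_p(\mathcal U)$, hence $\mathcal U\simeq\mathcal U\oplus\mathcal U$; then any two universal members, each complemented in the other and both isomorphic to their squares, are isomorphic.
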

\begin{proof} Let $\mathcal U$ be a complementably universal member. The spaces $\mathcal U$ and $\ell_p(\mathcal U)$
contain complemented copies of each other, and $\ell_p(\ell_p(\mathcal U)) \simeq \ell_p(\mathcal U)$, hence $\mathcal U \simeq \ell_p(\mathcal U)$. In particular,
$\mathcal U \simeq \mathcal U \oplus \mathcal U$. Now, if $A, B$ are two complementably universal members of $\mathcal M$, each of them contains
a complemented copy of the other and both are isomorphic to their squares, so they are isomorphic.\end{proof}

\subsection{Arrows and double arrows}
\adef\label{1menose}
We say that $f : A \to B$ is a $(1 + \varepsilon)$-isometry if it is a linear continuous operator $f : A \to B$ such that for every $x\in A$ verifies $(1 + \varepsilon)^{-1}\|x\|\leq \|f(x)\|\leq (1 + \varepsilon) \|x\|$. We will say that $f$ is a contractive $(1 +\varepsilon)$-isometry if it is a linear continuous operator $f : A \to B$ such that for every $x\in A$ verifies $(1 + \varepsilon)^{-1}\|x\|\leq \|f(x)\|\leq \|x\|$.
\zdef

We define now ``double arrows" $(f, \overline f)$ between Banach spaces.

\adef Given $\alpha, \gamma>1$ and $\beta\geq 0$ a (contractive) $(\alpha,\beta,\gamma)$-arrow is a pair $(f, \overline f)$ of linear continuous operators, $f:A\to B$ and $\overline f:B\to A$ in which $f$ is a (contractive) $\alpha$-isometry, $\|\overline f\| \leq \gamma$ and $\|\overline f f - 1_A\| < \beta$.\zdef

Throughout the paper, $(1,0,1)$-arrows have been called double arrows, and pairs $(f, \overline f)$ which are $(\alpha, \beta, \gamma)$-arrows for suitable $\alpha, \beta, \gamma $ have been called almost double arrows and depicted as $(f, \overline f): A \leftrightarrows B$.
The composition of two almost double arrows is $(f, \overline f)(g, \overline g)= (fg,\overline{gf})$. The operator $\overline f: B\to A$ can be considered as a kind of ``projection". This means that if one has a true
projection $\pi: B\to B$ with range $f(A)$ we will understand that the $\overline f: B \to A$ is $f^{-1}\pi$.
When no confusion arises, given an $(\alpha, \beta, \gamma)$-arrow $(f, \overline f)$ we will simply say that $\overline f$ is a $\beta$-projection along $f$ of norm at most $\gamma$. To measure the commutativity of diagrams we will need a fourth parameter.

\adef Let $(i_1, \overline i_1) : A \leftrightarrows C$, $(i_2, \overline i_2): A \leftrightarrows B$ and $(i_3, \overline i_3): B \leftrightarrows C$ be almost double arrows.
We will say that the diagram they form

$$\xymatrix{
 A \ar[rr]_{i_1} \ar@<-1ex>[rd]_{i_2} && C \ar@<-1ex>[ll]_{\overline {i_1}} \ar[ld]_{\overline {i_3}}\\
&B \ar@<-0.5ex>[lu]_{\overline {i_2}}  \ar@<-1ex>[ru]_{i_3}}
$$


i) $\varepsilon$-commutes if $\|i_3i_2 - i_1\| \leq \varepsilon$ and $\|\overline i_2\overline i_3 - \overline i_1\| \leq
\varepsilon$. ii) Almost commutes if there exists $\varepsilon >0$ such that the diagram
$\varepsilon$-commutes. iii) Commutes if $i_3i_2 = i_1$ and $\overline i_2\overline i_3 = \overline i_1$.\zdef


We present now  a technique that allows one to pass from almost-commutative diagrams with bad projections to commutative diagrams with good projections.

\begin{lema}[Ultraperturbation lemma]\label{equiv} Let $X$ be a Banach space. Given a double arrow $(i, \overline i) : F \leftrightarrows G$ between finite dimensional spaces and a double arrow $(j, \overline j): F \leftrightarrows X$, the following properties are equivalent:
 \begin{enumerate}

\item[i)] For every $\varepsilon >0$ there exists a $(1+\varepsilon, \varepsilon, 1+\varepsilon)$-double arrow $(J,\overline J): G \leftrightarrows X$ making
    the diagram $\varepsilon$-commute.
 \item[ii)] For every $\varepsilon >0$ there exists a $(1+\varepsilon, \varepsilon, 1)$-double arrow $(J, \overline J): G \leftrightarrows X$ making the diagram
     commute.

  \end{enumerate}
 \end{lema}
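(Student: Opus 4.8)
The plan is to prove the two implications separately, with $(ii)\Rightarrow(i)$ immediate and the substance lying in $(i)\Rightarrow(ii)$. Throughout I read the diagram via the statement's convention, so that a double arrow $(J,\overline J):G\leftrightarrows X$ means $J:G\to X$ and $\overline J:X\to G$, and commutativity of the diagram is $Ji=j$ together with $\overline i\,\overline J=\overline j$. For $(ii)\Rightarrow(i)$, fix $\varepsilon>0$ and take the $(1+\varepsilon,\varepsilon,1)$-arrow $(J,\overline J)$ making the diagram commute: it is in particular a $(1+\varepsilon,\varepsilon,1+\varepsilon)$-arrow because $1\le 1+\varepsilon$, and $Ji=j$, $\overline i\,\overline J=\overline j$ trivially give $\|Ji-j\|\le\varepsilon$ and $\|\overline i\,\overline J-\overline j\|\le\varepsilon$, so the same pair witnesses $(i)$.

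For $(i)\Rightarrow(ii)$ I would run the ultraperturbation argument. Fix $\varepsilon>0$ and a sequence $\delta_n\downarrow 0$. Condition $(i)$ applied to each $\delta_n$ yields $(1+\delta_n,\delta_n,1+\delta_n)$-arrows $(J_n,\overline{J_n}):G\leftrightarrows X$ with $\|J_n i-j\|\le\delta_n$ and $\|\overline i\,\overline{J_n}-\overline j\|\le\delta_n$. Choosing a countably incomplete ultrafilter $\U$ on $\N$ and writing $\Delta:X\to X_\U$ for the canonical diagonal embedding, I would define $\mathbf J:G\to X_\U$ by $\mathbf J g=[J_n g]_\U$ and $\overline{\mathbf J}:X_\U\to G$ by $\overline{\mathbf J}[x_n]_\U=\lim_\U\overline{J_n}x_n$; the latter is well defined since $G$ is finite dimensional and $\|\overline{J_n}x_n\|\le(1+\delta_n)\|x_n\|$ is bounded. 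Because $\delta_n\to 0$ and $G$ is finite dimensional, $\mathbf J$ is an isometry, $\|\overline{\mathbf J}\|\le\lim_\U(1+\delta_n)=1$, and $\overline{\mathbf J}\,\mathbf J=1_G$; moreover the ultralimits of the two commutativity estimates give, exactly, $\mathbf J i=\Delta j$ and $\overline i\,\overline{\mathbf J}\,\Delta=\overline j$. Thus $(\mathbf J,\overline{\mathbf J}):G\leftrightarrows X_\U$ is a genuine $(1,0,1)$-arrow, exactly compatible with the diagonal copy of the original diagram.

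The one defect is that $\mathbf J(G)$ lies in $X_\U$ rather than in $X$, and this is where I would invoke the principle of local reflexivity in its ultrapower push-down form. Keeping the projection untouched, I set $\overline J:=\overline{\mathbf J}\,\Delta:X\to G$, which already has $\|\overline J\|\le 1$ and satisfies $\overline i\,\overline J=\overline i\,\overline{\mathbf J}\,\Delta=\overline j$ exactly. For the embedding, local reflexivity provides, for any prescribed tolerance, a $(1+\varepsilon)$-isomorphism $T:\mathbf J(G)\to X$ that fixes $\mathbf J(G)\cap\Delta(X)$ pointwise (identifying it with its preimage in $X$) and preserves, up to that tolerance, the finitely many functionals of $X_\U^*$ defining $\overline{\mathbf J}$. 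Setting $J:=T\mathbf J$ one gets a $(1+\varepsilon)$-isometry with $Ji=T\mathbf J i=T\Delta j=j$ exactly, because $\Delta(j(F))\subseteq\mathbf J(G)\cap\Delta(X)$ is fixed by $T$; and $\overline J J=\overline{\mathbf J}\,\Delta T\,\mathbf J$ differs from $\overline{\mathbf J}\,\mathbf J=1_G$ by at most $\varepsilon$, since $\Delta T$ agrees with the inclusion of $\mathbf J(G)$ against exactly those functionals constituting $\overline{\mathbf J}$. Hence $(J,\overline J)$ is a $(1+\varepsilon,\varepsilon,1)$-double arrow making the diagram commute, which is $(ii)$.

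The crux, and the step I expect to need the most care, is the simultaneous control in the push-down: one must keep the ultralimit projection $\overline{\mathbf J}\,\Delta$ literally unchanged --- this is what buys the exact norm bound $\le 1$ and the exact projection-commutativity $\overline i\,\overline J=\overline j$ for free --- while perturbing only the embedding. The reason exact commutativity survives on the embedding side but $\overline J J=1_G$ degrades to an $\varepsilon$-estimate is structural: the constraint $\mathbf J i=\Delta j$ lives on the diagonal (constant sequences), which descends to $X$ isometrically and can be fixed on the nose, whereas $\overline J J=1_G$ is tested through the genuinely non-diagonal functionals defining $\overline{\mathbf J}$, which local reflexivity can only preserve approximately. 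This dichotomy is precisely what makes the target a $(1+\varepsilon,\varepsilon,1)$-arrow rather than a $(1+\varepsilon,0,1)$-arrow.
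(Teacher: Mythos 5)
Your proposal is correct and follows essentially the same route as the paper: form the ultraproduct arrow $([J_n],[\overline{J_n}]):G_{\mathscr U}\leftrightarrows X_{\mathscr U}$ (using $G=G_{\mathscr U}$ for $G$ finite dimensional), keep the projection as the restriction $[\overline{J_n}]_{|X}$ of the ultraproduct projection to the diagonal copy of $X$, and push the embedding down into $X$ via local reflexivity (local complementation of $X$ in $X_{\mathscr U}$), exploiting that the constraint $Ji=j$ lives on the diagonal and so survives exactly. The only cosmetic difference is that the paper's operator $T_\varepsilon$ carries just the norm bound $\|T_\varepsilon\|\leq 1+\varepsilon$ together with exact preservation of a finite $\varepsilon/2$-net of functionals on $[J_n](G)$ (from which both the lower isometric bound and the estimate $\|\overline J J-1_G\|\leq\varepsilon$ are derived), whereas you ask directly for a two-sided $(1+\varepsilon)$-isomorphism preserving the coordinate functionals of $\overline{\mathbf J}$ --- equivalent formulations of the same principle.
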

\begin{proof} It is clear that $ ii)\Rightarrow i)$, so we only need to prove that $i)\Rightarrow ii)$.
Consider a positive sequence $(\varepsilon_n)$ with $\lim \varepsilon_n=0$ and, by i), $(1+\varepsilon_n, \varepsilon_n, 1+\varepsilon_n)$-double
arrows $(J_n, \overline {J_n}): G \leftrightarrows X$ making the diagram $\varepsilon_n$-commute. Take a non-trivial ultrafilter $\U$ on $\N$
and form the operators $[J_n]: G_\U \to X_\U$ and $[\overline {J_n}]: X_\U \to G_\U$. It turns out that $[J_n]$ is an into isometry and
$[\overline {J_n}]$ a norm 1 projection through $[J_n]$, so $([J_n], [\overline {J_n}]): G_\U \leftrightarrows X_\U$ is a double arrow. And if $([i],
[\;\overline i\;]): F_\U \leftrightarrows G_\U$ and $([j], [\;\overline j\;]): F_\U \leftrightarrows X_\U$ are the natural double arrows, the diagram is
commutative since $[J_n][\; i\;]=[\;j\; ]$ and $[\;\overline i\; ][\overline J_n]=[\;\overline j \;]$.\medskip

Since $F$ and $G$ are finite dimensional spaces then $F = F_\U$ and $G
= G_\U$. So $[J_n](G_\U)$ is finite dimensional and we can
choose a $\varepsilon/2$-net $f_1,...,f_N$ in the dual unit ball $B_{[J_n](G)^*}$, which we can assume to be in the dual unit ball of $(X_\U)^*$, such that for every $g\in G$,
$$|<[J_n]g, f_k>|\geq (1+\varepsilon)^{-1}\|[J_n]g\|, \; \mbox{for some }k.$$

Now, observe that the key feature behind the Principle of Local Reflexivity of Lindenstrauss and Rosenthal \cite{plr} is the notion of local complementation identified by Kalton \cite{kaltloc}, as it appears implicitly in \cite{ma} and explicitly in \cite{oja}. We will consider from now on a Banach space $X$  isometrically embedded into its ultrapower $X_\U$ via the map $x\to [x]$. In this form, $X$ is locally complemented in $X_\U$. Thus,
once the functionals $f_1, \dots, f_N$ are set, given $\varepsilon>0$ there is an operator $T_\varepsilon: [J_n](G) \to X$ such that
\begin{enumerate}
\item $\|T_\varepsilon\|\leq 1+\varepsilon$
\item $(T_\varepsilon)_{|[J_n](G)\cup X} = 1_{[J_n](G)\cap X}$
\item $(T_\varepsilon^* f_k)_{|[J_n](G)} = (f_k)_{|[J_n](G)}$
\end{enumerate}

Therefore, the map $T_\varepsilon[J_n]$ is a $1+\varepsilon$-isometry since $\|T_\varepsilon[J_n](g)\| \leq  (1+\varepsilon)\|g\|$ and
\begin{eqnarray*}
\|T_\varepsilon[J_n](g)\| &\geq& |<T_\varepsilon[J_n](g), f_k>|\\
&=& |<[J_n](g), T_\varepsilon^*f_k>|\\ &=& |<[J_n](g), f_k>|\\ &\geq& (1+\varepsilon)^{-1} \|[J_n](g)\|\end{eqnarray*}

On the other hand, the norm 1 projection we need is $[\overline{J_n}]_{|X}$. The couple $(T_\varepsilon[J_n], [\overline{J_n}]_{|X}): G \leftrightarrows X$ is a $(1+\varepsilon, \varepsilon, 1)$-double arrow since, for suitably chosen $f_k$ one has
\begin{eqnarray*}
\|[\overline{J_n}] T_\varepsilon[J_n][g] - [g]\| &=& \|[\overline{J_n}]T_\e[J_n][g] - [\overline{J_n}J_ng]\|\\
&=&\|[\overline{J_n} T_\e[J_n][g]] - [\overline{J_n}J_ng]\|\\
&=&\|[T_\e[J_n][g]] - [J_ng]\|\\
&\leq& \langle [T_\e[J_n][g]] - [J_ng], f_k\rangle + \varepsilon\\
&=& \varepsilon.
\end{eqnarray*}
The diagram commutes since $T_\varepsilon[J_n][i] = T_\e[j] = j$ and $\overline i [\overline{J_n}]_{|X}  = [\overline{i}\overline{J_n}]_{|X} =  \overline{j}_{|X} = \overline{j}.$\end{proof}

Observe that adding ``contractive" to the hypothesis does not improve the results. We conclude this section with a set of elementary estimates that will be useful later.

\begin{lema}$\;$\label{casiequiv}
\begin{enumerate}
\item If $f$ is a (contractive) $(1+\varepsilon)$ isometry and $\tau$ is a (contractive) $(1+\varepsilon')$-isometry then $\tau f $ is a (contractive) $(1+ \varepsilon)(1 + \varepsilon')$-isometry.
\item  If $f$ is a $(1+\varepsilon)$-isometry then $\frac{1}{1+\varepsilon}f$ is a contractive $(1+\varepsilon)^2$-isometry.
\item If $(f, \overline f)$ is an $(\alpha, \beta, \gamma)$-arrow  then $(\frac{1}{\alpha}f, \frac{1}{\gamma}\overline f)$ is a contractive
$\left (\alpha^2, \frac{\beta + \gamma \alpha -1}{\gamma\alpha} , 1 \right)$ - arrow.
\item\ If $(f, \overline f)$ is a (contractive) $(1+\varepsilon, \varepsilon, 1)$-arrow  then $(f, (\overline f\; f)^{-1}\overline f)$ is a (contractive) $(1+\varepsilon, 0, 1 + \frac{\varepsilon}{1-\varepsilon}) - \mathrm{arrow}.$ Moreover, $\|\overline f - (\overline f\; f)^{-1}\overline f\|\leq  \frac{ \varepsilon }{1-\varepsilon}$
\end{enumerate}
\end{lema}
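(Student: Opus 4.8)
The statement is a bundle of four elementary perturbation estimates, so my plan is to verify each one by directly chaining the defining two-sided norm inequalities, reserving the only genuine analytic input for item (4). For (1) I would compose the defining bounds of the two isometries: for every $x$ one has $\|\tau f(x)\|\le(1+\varepsilon')\|f(x)\|\le(1+\varepsilon')(1+\varepsilon)\|x\|$ and, dually, $\|\tau f(x)\|\ge[(1+\varepsilon)(1+\varepsilon')]^{-1}\|x\|$, which is exactly the assertion; when both maps are contractive the upper estimate sharpens to $\|\tau f(x)\|\le\|x\|$, so contractivity survives composition. For (2), scaling a $(1+\varepsilon)$-isometry $f$ by $(1+\varepsilon)^{-1}$ turns the upper bound into $\|(1+\varepsilon)^{-1}f(x)\|\le\|x\|$ (hence contractive) and the lower bound into $\|(1+\varepsilon)^{-1}f(x)\|\ge(1+\varepsilon)^{-2}\|x\|$, giving precisely a contractive $(1+\varepsilon)^2$-isometry.

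For (3) I would split the pair into its two coordinates. The isometry coordinate $\frac1\alpha f$ is dealt with at once by (2) applied with $1+\varepsilon=\alpha$, producing a contractive $\alpha^2$-isometry, while the projection norm is controlled by $\|\frac1\gamma\overline f\|\le\frac1\gamma\gamma=1$, fixing the third parameter. For the middle parameter the device is the algebraic splitting
$$\frac{1}{\gamma\alpha}\,\overline f f-1_A=\frac{1}{\gamma\alpha}\bigl(\overline f f-1_A\bigr)+\frac{1-\gamma\alpha}{\gamma\alpha},$$
whose norm is at most $\frac{1}{\gamma\alpha}\beta+\frac{\gamma\alpha-1}{\gamma\alpha}=\frac{\beta+\gamma\alpha-1}{\gamma\alpha}$, using $\gamma\alpha>1$ since $\alpha,\gamma>1$.

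Item (4) is the only step with real content, and the mild obstacle I anticipate is justifying the inverse $(\overline f f)^{-1}$ and bounding its norm. Because $\|\overline f f-1_A\|<\varepsilon<1$, the operator $\overline f f$ is invertible via its Neumann series, with $\|(\overline f f)^{-1}\|\le(1-\varepsilon)^{-1}$. Setting $\overline g=(\overline f f)^{-1}\overline f$, one gets $\overline g f=1_A$ exactly, so the middle parameter is $0$; and since $\|\overline f\|\le1$ the estimate $\|\overline g\|\le\|(\overline f f)^{-1}\|\le(1-\varepsilon)^{-1}=1+\frac{\varepsilon}{1-\varepsilon}$ yields the third parameter. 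The ``Moreover'' bound then drops out of the identity $1_A-(\overline f f)^{-1}=(\overline f f)^{-1}(\overline f f-1_A)$, giving $\|\overline f-\overline g\|=\|(1_A-(\overline f f)^{-1})\overline f\|\le\|(\overline f f)^{-1}\|\,\|\overline f f-1_A\|\,\|\overline f\|\le\frac{\varepsilon}{1-\varepsilon}$; the first coordinate $f$ is left untouched, so its (contractive) $(1+\varepsilon)$-isometry character is retained.
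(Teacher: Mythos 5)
Your proof is correct, and for the only item with real content it follows the paper's own route: the paper likewise dismisses (1)--(3) as immediate and proves (4) by the same Neumann-series argument, getting $\|(\overline f\, f)^{-1}\|\leq \frac{1}{1-\varepsilon}$ and observing that $(\overline f\, f)^{-1}\overline f\, f = 1_A$ makes the middle parameter exactly $0$. The one place you diverge is the ``Moreover'' estimate, and your version is the cleaner of the two: you factor $\overline f - (\overline f\, f)^{-1}\overline f = \bigl(1_A - (\overline f\, f)^{-1}\bigr)\overline f = (\overline f\, f)^{-1}(\overline f\, f - 1_A)\overline f$ and use $\|\overline f\|\leq 1$, an identity valid on all of $B$; the paper instead writes $\overline f = \overline f\, f\, f^{-1}$ and factors through $f^{-1}$, which presumes $f f^{-1} = 1_B$ and hence is literally valid only on the range of $f$ (the bound it yields, $\varepsilon\|f^{-1}\|\leq \frac{\varepsilon}{1-\varepsilon}$, is nevertheless the right one). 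So your argument quietly repairs a small imprecision in the published proof. Your explicit verifications of (1)--(3), including the splitting $\frac{1}{\gamma\alpha}\overline f f - 1_A = \frac{1}{\gamma\alpha}(\overline f f - 1_A) + \frac{1-\gamma\alpha}{\gamma\alpha}1_A$ for the middle parameter in (3), are exactly the computations the paper leaves to the reader.
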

\begin{proof} Probably only assertion (4) requires some explanation. Since $\|1 - \overline f\; f \|\leq \varepsilon <1$ then $1 - (1 - \overline f\; f) = \overline f\; f$ is invertible
and its inverse has norm at most $1 + \varepsilon + \varepsilon^2 + \cdots = \frac{1}{1-\varepsilon} = 1 + \frac{\varepsilon}{1-\varepsilon}$.
Then $(\overline f\; f)^{-1}$ exists and $(\overline f\; f)^{-1}p$ is a true projection along $f$ since $(\overline f\; f)^{-1} \overline f\;  f = 1$. Finally

$$\|\overline f - (\overline f\; f)^{-1}\overline f\| = \|\overline f\; f f^{-1}- (\overline f\; f)^{-1}\overline f\; f f^{-1}\| \leq \|\overline f\; f - 1\|\|f^{-1}\|\leq \frac{ \varepsilon }{1-\varepsilon}.$$

\end{proof}

\subsection{Skeletons}\label{skeleton}
Different approximation notions are essential in the theory of spaces of complemented disposition. A Banach space $X$ is said to have the $\lambda$-approximation property ($\lambda$-BAP in short) if for each finite
dimensional subspace $F\subset X$ and every $\lambda'>\lambda$ there is a finite-rank operator
$T:X\to X$ such that $\|T\|\leq \lambda'$ and $T(f)=f$ for each
$f\in F$. This is not the standard definition, but it is an equivalent formulation (see \cite[Theorem 3.3]{casazza}). The space is said to have the Bounded Approximation
Property (BAP in short) \index{Bounded Approximation
Property, BAP} if it enjoys the \index{$lzap$@$\lambda$-BAP} $\lambda$-BAP for some
$\lambda$. A $\mu$-complemented subspace of a space with the $\lambda$-BAP has the $\lambda\mu$-BAP.  When $X$ is separable, the $\lambda$-BAP is equivalent to the
existence of a sequence $B_n: X \to X$ of linear
finite-dimensional operators with norms $\|B_n\| \leq \lambda$
that is pointwise convergent to the identity. This sequence of operators can be asked to verify $B_mB_n= B_n$ for $m>n$. The sequence is called a
a Finite Dimensional Decomposition (FDD, in short) if, moreover, for every $m, n \in
\N$, $B_nB_m = B_{\min\{m, n\}}$. 
By a well-known
result of Pe\l czy\'nski \cite{pelcbap}, spaces complementably universal for spaces with FDD are also complementably universal
for separable spaces with the BAP.\medskip

An essential part in our arguments and in the classification of spaces of (almost) universal complemented disposition is
played by the notion of \emph{skeleton} which, as we will show next, coincides with that of $1$-Finite Dimensional Decomposition, although the skeleton formulation is
more adapted to the problems treated in this paper:

\adef We say that a Banach space $E$ admits a \emph{skeleton} if there exists a sequence $(E_n)$ of  finite-dimensional subspaces
and of double arrows $(\delta_n, \overline{\delta_n}) : E_n \leftrightarrows E_{n+1}$ so that $E=\overline{\cup_n E_n}$. We will say that $E$  admits
a \emph{$\omega$-skeleton} if there is a continuous chain
$(E_\alpha)_{\alpha<\omega_1}$ of separable subspaces and double arrows $(\delta_\alpha, \overline{\delta_\alpha}) : E_\alpha \leftrightarrows
E_{\alpha+1}$. Here \emph{continuous} means that for every limit ordinal $\beta $ one has $E_\beta = \overline {\cup_{\alpha<\beta} E_\alpha}$.\zdef

In each case we shall say that $(\delta_\alpha, \overline{\delta_\alpha})$ is the family of double arrows defining the ($\omega$) skeleton.
Of course that spaces admitting  a skeleton must be separable and spaces admitting a $\omega$-skeleton must have density character at most
$\aleph_1$.
\begin{lema}\label{garlem}$\;$ \begin{itemize} \item A Banach space has a skeleton if and only if it has a $1$-FDD.
\item A Banach space has an $\omega$-skeleton if and only if it is a $1$-Plichko space with density character at most $\aleph_1$.
\end{itemize}\end{lema}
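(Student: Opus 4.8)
The plan is to prove the two equivalences separately, each by exhibiting the standard back-and-forth translation between the ``skeleton'' language (sequences of double arrows $(\delta_n,\overline{\delta_n}):E_n\leftrightarrows E_{n+1}$) and the operator language of $1$-FDD's and $1$-Plichko spaces. For the first item, recall that a $1$-FDD is a sequence $(B_n)$ of finite-dimensional operators on $E$ with $\|B_n\|\le 1$, $B_nB_m=B_{\min\{m,n\}}$, converging pointwise to the identity. The content of a double arrow $(\delta_n,\overline{\delta_n})$ is exactly that $E_n$ is $1$-complemented in $E_{n+1}$, i.e.\ $\delta_n$ is an isometric inclusion admitting a norm-one projection $\overline{\delta_n}$. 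First I would show that a skeleton yields a $1$-FDD: set $B_n$ to be the composition $E\to E_n\hookrightarrow E$ of the norm-one projection onto $E_n$ with the inclusion. Compatibility of the projections $\overline{\delta_n}$ gives $\|B_n\|=1$ and the telescoping identity $B_nB_m=B_{\min\{m,n\}}$, and since $E=\overline{\bigcup_n E_n}$ these converge pointwise to $1_E$. Conversely, given a $1$-FDD I would put $E_n=B_n(E)=\operatorname{ran}B_n$, take $\delta_n$ the inclusion and $\overline{\delta_n}$ the restriction of $B_n$ to $E_{n+1}$; the FDD relations guarantee these are double arrows and that $E=\overline{\bigcup_n E_n}$.

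For the second item, the task is to identify ``$\omega$-skeleton'' with ``$1$-Plichko of density character $\le\aleph_1$''. I would recall that a space $E$ is $1$-Plichko when it admits a $1$-projectional skeleton, equivalently a commutative family (directed by inclusion, $\sigma$-complete) of norm-one projections onto separable subspaces whose ranges cover a norming dense subspace; for density character $\aleph_1$ this can be arranged as a continuous increasing chain of norm-one projections $(P_\alpha)_{\alpha<\omega_1}$ with separable ranges $E_\alpha=P_\alpha(E)$ satisfying $P_\alpha P_\beta=P_{\min\{\alpha,\beta\}}$ and $E=\overline{\bigcup_\alpha E_\alpha}$. The translation is then the transfinite analogue of the first part: from such a chain the pairs $(\delta_\alpha,\overline{\delta_\alpha})=(\text{inclusion},\,P_\alpha|_{E_{\alpha+1}})$ form a continuous chain of double arrows, so $E$ has an $\omega$-skeleton; conversely an $\omega$-skeleton, being a continuous chain of norm-one projections $P_\alpha$ onto the separable $E_\alpha$ with $E=\overline{\bigcup_\alpha E_\alpha}$ and density character $\le\aleph_1$, is precisely a $1$-projectional skeleton and hence witnesses that $E$ is $1$-Plichko.

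The main obstacle, and the only non-formal point, is the continuity/$\sigma$-completeness bookkeeping in the $\omega$-direction together with the \emph{norming} requirement hidden in the definition of a $1$-Plichko space. In the skeleton-to-Plichko direction one must check that at limit ordinals $\beta$ the equality $E_\beta=\overline{\bigcup_{\alpha<\beta}E_\alpha}$ (which is built into the definition of $\omega$-skeleton) forces the induced family of projections to be $\sigma$-complete, and that the union of the ranges is norming for $E$; this uses that a pointwise (on a cofinal chain) limit of norm-one projections is again a norm-one projection onto the closure of the union, exactly as in the separable FDD case but transfinitely. In the converse direction the delicate step is to pass from an arbitrary $1$-projectional skeleton to a \emph{continuous well-ordered chain} indexed by $\omega_1$: one uses the directedness and $\sigma$-completeness of the skeleton together with $\operatorname{dens}E\le\aleph_1$ to choose a cofinal $\omega_1$-chain inside it and to fill in the limit stages by closed unions. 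I expect the algebraic verifications (the relations $B_nB_m=B_{\min\{m,n\}}$ and their transfinite analogues, and the fact that inclusions with norm-one projections are double arrows) to be entirely routine; the genuine work is the limit-stage continuity argument, which I would carry out by a standard closing-off / elementary-submodel or cofinality argument so that every separable piece eventually lands inside some $E_\alpha$ with $\alpha<\omega_1$.
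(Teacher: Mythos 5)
Your proposal is correct, and for the first item it is close to the paper's proof but finishes in a genuinely different way. The paper builds the same telescoped projections $P_k\colon E\to E_k$, namely $P_k x=\overline{\delta_k}\cdots\overline{\delta_n}x$ for $x\in E_{n+1}$, extended by density; but it then only records that these give the $\pi_1$-property and invokes Casazza's result (Proposition 5.4 of his Handbook survey) that the $\pi_1$-property in a separable space implies a $1$-FDD --- a non-trivial theorem, since $\pi_1$-projections need not commute. You instead verify the commutation $B_nB_m=B_{\min\{m,n\}}$ directly, which indeed holds for the telescoped projections (for $k\le m$ one has $P_kP_m=P_k$ because $P_k=\overline{\delta_k}\cdots\overline{\delta_{m-1}}P_m$ on $\bigcup_n E_n$, and $P_mP_k=P_k$ because $P_m$ is the identity on $E_m\supseteq E_k$), and with the paper's own definition of FDD this closes the argument with no external citation; that is a genuine, if mild, simplification. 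One presentational caveat: when you write ``the norm-one projection onto $E_n$'' you are presupposing an object that is not part of the skeleton data --- it must itself be constructed by exactly this telescoping-plus-density argument, as your words ``compatibility'' and ``telescoping'' indicate you intend, so say so explicitly. For the second item the comparison runs the other way: the paper's entire proof is a citation to Section 6 of Kubi\'s's paper on linearly ordered compacta, whereas you shoulder the bookkeeping yourself (transfinite telescoping, $\sigma$-completeness at limit ordinals via continuity of the chain, extraction of a continuous cofinal $\omega_1$-chain, and the norming of the union of the ranges of the adjoints); but your starting point, the identification of $1$-Plichko spaces with spaces admitting a commutative $1$-projectional skeleton, is itself a theorem of Kubi\'s rather than the definition, so the load-bearing step is still outsourced. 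Granting that characterization, your closing-off argument is the standard one and is sound, so the net effect is that you prove more of the statement than the paper does while relying on a different, equally non-trivial, external result.
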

\begin{proof} Assume that a Banach space $E$ has a skeleton $(\delta_n, \overline{\delta_n}) : E_n \leftrightarrows E_{n+1}$. The spaces $E_k$ are $1$-complemented in $E$ since one can define norm one projections $P_k: E\to E_k$ as follows: if $x\in \cup E_n$ and $x\in E_{n+1}$ then set $P_k(x)=\overline{\delta_k}\dots \overline{\delta_{n-1}}\overline{\delta_n}(x)$ and extend $P_k$ to $E$ by
density. Notice that if $n+1 < k$, then $P_k(x) = x$. These projections verify $\lim P_k(x)=x$. Thus, spaces with skeleton have the $\pi_1$-property \cite[Def.5.1]{casazza}; i.e., there is a net of finite rank norm one projections pointwise convergent to the identity. The $\pi_1$ property in a separable space implies $1$-FDD \cite[Prop.5.4]{casazza}. It is clear that  spaces with $1$-FDD have a skeleton.
The second part can be found in \cite[Section 6]{kubislinear}.
\end{proof}

$\omega$-skeletons will only appear in the final Sections 8 and 9, where we will maintain the name by coherence with the rest of the paper and because statements are shorter this way. The first assertion in Lemma \ref{garlem} appears used in \cite{garbula}. Kubi\'s \cite{kubiskele} and other authors have given more general notions of \emph{projectional skeleton} by considering a partially ordered index space.

\subsection{Push-out constructions}

\subsubsection{The push-out}  Given
operators $i:Y\to A$ and $j:Y\to B$, the associated
push-out diagram is
\begin{equation}\label{po-dia}
\begin{CD}
Y@>i>> A\\
@V j VV @VV j' V\\
B @> i' >> \PO
\end{CD}
\end{equation}
Here, the push-out space $\PO=\PO(i,j)$ is the quotient
of the direct sum $A\oplus_1 B$, the product space endowed with
the sum norm, by the closure of the subspace $\Delta=\{(i
y,-j y): y\in Y\}$. We will call $Q: A\oplus_1 B\to (A\oplus_1 B)/\overline\Delta$,
the natural quotient map. The map $i'$ is given by the
inclusion of $B$ into $A\oplus_1 B$ followed by $Q$, so
that $i'(b)=(0,b)+\overline\Delta$ and, analogously,
$j'(a)=(a,0)+\overline\Delta$.

The diagram (\ref{po-dia}) is commutative:
$j'i=i'j$. Moreover, it is `minimal' in the
sense of having the following universal property: if $j'':A\to
C$ and $i'':B\to C$ are operators such that
$j''i=i''j$, then there is a unique operator
$\gamma:\PO\to C$ such that $i''=\gamma i'$ and
$j''=\gamma j'$. Clearly, $\gamma((a, b) +
\overline{\Delta}) = j''(a)+i''(b)$ and one has
$\|\gamma\|\leq \max \{\|i''\|, \|j''\|\}$. Regarding the
behaviour of the maps in diagram~(\ref{po-dia}) one has (see \cite[Lemma A.19]{accgmLN} for details):

\begin{lema}\label{isom}$\;$
\begin{itemize}
\item[(a)] If $i$ is an isomorphic embedding, then $\Delta$
is closed. \item[(b)] The norm of the operators $i'$ and $j'$ is less than or equal to one.

\item[(c)]If $i$ is an isometric embedding and
$\|j\|\leq 1$ then $i'$ is an isometric embedding.
\item[(d)] If $i$ is an isomorphic embedding then $i'$
is an isomorphic embedding. \item[(e)] If $\|j\|\leq 1$ and
$i$ is an isomorphism then $i'$ is an isomorphism and
$$\|(i')^{-1}\|\leq \max \{1, \|i^{-1}\|\}.$$
\end{itemize}
\end{lema}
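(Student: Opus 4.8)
The plan is to reduce every assertion to the explicit quotient-norm formula. Since $\PO=(A\oplus_1 B)/\overline\Delta$, for any $(a,b)$ one has $\|Q(a,b)\|=\mathrm{dist}((a,b),\overline\Delta)$, and as $\Delta$ is dense in $\overline\Delta$ (and distance to a set equals distance to its closure) this equals $\inf_{y\in Y}\big(\|a-iy\|+\|b+jy\|\big)$. In particular
$$\|i'(b)\|=\inf_{y\in Y}\big(\|iy\|+\|b+jy\|\big),\qquad \|j'(a)\|=\inf_{y\in Y}\big(\|a-iy\|+\|jy\|\big),$$
and each of (b)--(e) becomes an estimate for one of these two infima.

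For (a), suppose $i$ is bounded below, say $\|iy\|\ge c\|y\|$ with $c>0$. If $(iy_n,-jy_n)\to(a,b)$ in $A\oplus_1 B$, then $(iy_n)$ is Cauchy, hence so is $(y_n)$, so $y_n\to y$ for some $y\in Y$; continuity of $i$ and $j$ forces $(a,b)=(iy,-jy)\in\Delta$, so $\Delta$ is closed. For (b), setting $y=0$ in the formulas above gives $\|i'(b)\|\le\|b\|$ and $\|j'(a)\|\le\|a\|$, so both maps have norm at most one.

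For (c) I would bound below: using $\|b+jy\|\ge\|b\|-\|jy\|$, the isometry $\|iy\|=\|y\|$ and $\|jy\|\le\|y\|$, each summand satisfies $\|iy\|+\|b+jy\|\ge\|b\|+(\|y\|-\|jy\|)\ge\|b\|$; taking the infimum yields $\|i'(b)\|\ge\|b\|$, which together with (b) forces equality. For (d), write $K=\|j\|$ (the case $j=0$ being trivial since then $\|i'(b)\|=\|b\|$) and split on the size of $\|jy\|$: if $\|jy\|\le\tfrac12\|b\|$ then $\|b+jy\|\ge\tfrac12\|b\|$, while if $\|jy\|>\tfrac12\|b\|$ then $\|y\|>\|b\|/(2K)$, so $\|iy\|\ge c\|y\|>c\|b\|/(2K)$. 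In either case $\|iy\|+\|b+jy\|\ge\tfrac12\min\{1,c/K\}\,\|b\|$, so $i'$ is bounded below and hence an isomorphic embedding.

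The delicate step is (e), where the sharp constant is needed. Surjectivity is immediate: since $i$ is onto, given $Q(a,b)$ choose $y$ with $iy=a$ and note $(a,b)=(iy,-jy)+(0,b+jy)$, whence $Q(a,b)=i'(b+jy)$. For the inverse-norm bound set $M=\|i^{-1}\|$, so that $\|iy\|\ge\|y\|/M$, and put $t=\|y\|$; using $\|jy\|\le t$ one gets $\|b+jy\|\ge\max(0,\|b\|-t)$, hence
$$\|iy\|+\|b+jy\|\ \ge\ \frac{t}{M}+\max\big(0,\|b\|-t\big).$$
An elementary minimisation of the right-hand side over $t\ge 0$ (treating the regimes $t\le\|b\|$ and $t>\|b\|$ separately, the minimum occurring at $t=\|b\|$ when $M\ge 1$ and at $t=0$ when $M<1$) gives $\|i'(b)\|\ge\|b\|/\max\{1,M\}$. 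With surjectivity this shows $i'$ is an isomorphism and $\|(i')^{-1}\|\le\max\{1,\|i^{-1}\|\}$. I expect this last optimisation --- extracting the precise constant rather than merely a positive lower bound --- to be the only point requiring genuine care.
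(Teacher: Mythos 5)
Your proof is correct, and it takes essentially the approach the paper relies on: the paper gives no proof of this lemma, deferring to \cite[Lemma A.19]{accgmLN}, but the quotient-norm formula $\|Q(a,b)\|=\inf_{y\in Y}\left(\|a-iy\|+\|b+jy\|\right)$ on which you base everything is exactly the computation the paper itself deploys inline (in the proof of Lemma \ref{amostdpo}) to estimate $\|i'(x)\|_{\PB\!O}$ and $\|j'(b)\|_{\PB\!O}$. All five parts check out, including the two-regime minimisation over $t=\|y\|$ that extracts the sharp constant $\max\{1,\|i^{-1}\|\}$ in (e).
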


\subsubsection{The almost-complemented push-out.} We establish now that the push-out construction can be adapted to cover the case of $\varepsilon$-projections.

\begin{lema}\label{amostdpo} Given almost double arrows $(i,\overline{i}):  A \leftrightarrows B$ and $(j,\overline{j}):  A \leftrightarrows X$ there is a commutative diagram
\begin{equation}\label{thediagram}\xymatrix{
A  \ar[r]\ar@<-1ex>[d]_{(j,\overline{j})}& B\ar@<-1ex>[l]_{(i,\overline{i})} \ar[d]  \\
X \ar@<-1ex>[r]_{(i',\overline{i'})} \ar[u]& \PO \ar[l]\ar@<-1ex>[u]_{(j',\overline{j'})} }\end{equation}
so that if $(i, \overline{i})$ is an $(\alpha, 0 , \gamma)$-arrow and $(j, \overline{j})$ is a  $(u, v, w)$-arrow then
$(i', \overline{i'} )$ is a contractive $(\alpha u, 0, u\gamma)$-arrow and $(j', \overline{j'} )$ is a contractive $(u\alpha, \alpha v \gamma ,\max\{\alpha w, 1+\alpha v\gamma\})$-arrow. Moreover (compare with Lemma \ref{isom} (c) above) if $(i, \overline{i})$ is a $(1, 0, 1)$-arrow, and $(j, \overline{j})$ is a  contractive $(u, v, w)$-arrow
then $(i', \overline{i'} )$ is a $(1, 0, 1 )$-arrow and $(j', \overline{j'} )$ is a contractive  $(u, v,\max\{w, 1+ v\})$-arrow.
\end{lema}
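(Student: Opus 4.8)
The plan is to run everything off the ordinary push-out of Lemma \ref{isom} applied to the span $B\leftarrow A\rightarrow X$ with the given maps $i:A\to B$ and $j:A\to X$. Write $\PO=(B\oplus_1 X)/\overline\Delta$ with $\Delta=\{(ia,-ja):a\in A\}$, $j'(b)=(b,0)+\overline\Delta$ and $i'(x)=(0,x)+\overline\Delta$. Since $i$ is bounded below, $\Delta$ is closed by Lemma \ref{isom}(a), so any bounded map on $B\oplus_1 X$ that vanishes on $\Delta$ descends to $\PO$, and the universal property produces operators out of $\PO$ obeying $\|\gamma\|\leq\max\{\|\phi\|,\|\psi\|\}$; forward commutativity $j'i=i'j$ is automatic. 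I would read the isometry constants straight from the quotient norm: $\|j'(b)\|=\inf_a(\|b-ia\|+\|ja\|)\leq\|b\|$ on taking $a=0$ (contractivity), while $\|ja\|\geq u^{-1}\|a\|\geq(\alpha u)^{-1}\|ia\|$ together with the one-variable minimisation of $|\,\|b\|-t\,|+(\alpha u)^{-1}t$ over $t=\|ia\|\geq0$ gives $\|j'(b)\|\geq(\alpha u)^{-1}\|b\|$; the identical estimate on $\|i'(x)\|=\inf_a(\|ia\|+\|x-ja\|)$ yields the constant of $i'$. In the Moreover case $\alpha=1$ and $\|j\|\leq1$, so Lemma \ref{isom}(c) upgrades $i'$ to an exact isometry.

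The projection $\overline{i'}:\PO\to X$ is the easy half: since $(i,\overline i)$ has middle parameter $0$, i.e.\ $\overline i\,i=1_A$ exactly, the pair $(j\overline i$ on $B$, $1_X$ on $X)$ agrees on $A$ and may be fed to the universal property, giving $\overline{i'}((b,x)+\overline\Delta)=j\overline i(b)+x$. One checks directly that it vanishes on $\Delta$, that $\overline{i'}i'=1_X$ exactly (so middle parameter $0$), and that $\|\overline{i'}\|\leq\max\{\|j\overline i\|,1\}\leq\max\{u\gamma,1\}$, which collapses to $1$ when $\alpha=\gamma=1$ and $\|j\|\leq1$.

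The delicate step, and the one I expect to be the main obstacle, is $\overline{j'}$, because $\overline j$ is only an approximate projection ($\|\overline j\,j-1_A\|<v$): the naive pair $(1_B$ on $B$, $i\overline j$ on $X)$ fails to agree on $A$ and cannot be used. The remedy is to spend the exact identity $\overline i\,i=1_A$ to repair commutativity on $A$: replace the $B$-leg map by $\phi_B=1_B+i(\overline j\,j-1_A)\overline i$, which still satisfies $\phi_B\,i=i\overline j\,j=(i\overline j)j$. This produces the well-defined $\overline{j'}((b,x)+\overline\Delta)=b+i(\overline j\,j-1_A)\overline i(b)+i\overline j(x)$ (it vanishes on $\Delta$ precisely because $\overline i\,i=1_A$), whence $\overline{j'}j'-1_B=i(\overline j\,j-1_A)\overline i$ has norm at most $\alpha v\gamma$, the middle parameter, and $\|\overline{j'}\|\leq\max\{\|i\overline j\|,\|\phi_B\|\}\leq\max\{\alpha w,1+\alpha v\gamma\}$, the norm bound; setting $\alpha=\gamma=1$ returns the Moreover constants $(u,v,\max\{w,1+v\})$. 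As a free by-product a short computation gives $\overline i\,\overline{j'}=\overline j\,\overline{i'}$ (both equal $\overline j\,j\,\overline i(b)+\overline j(x)$), so the square in fact commutes in both directions, matching its use elsewhere; all that then remains is the routine arithmetic already indicated, plus noting that the $a=0$ choice makes every forward map genuinely contractive on the $\oplus_1$-norm.
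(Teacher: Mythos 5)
Your proposal is correct and follows essentially the same route as the paper: the same push-out on $B\oplus_1 X$, the same corrected map $\phi_B=1_B+i(\overline{j}\,j-1_A)\overline{i}$ fed to the universal property to produce $\overline{j'}$ (exploiting $\overline{i}\,i=1_A$), the same pair $(j\overline{i},1_X)$ for $\overline{i'}$, and the same quotient-norm estimates for the isometry constants. The only cosmetic difference is that you verify the backward commutativity $\overline{i}\,\overline{j'}=\overline{j}\,\overline{i'}$ by direct computation on representatives, whereas the paper derives it from the uniqueness clause of the push-out's universal property applied to a third commutative square.
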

All this can be depicted for mnemonical reasons as
\begin{equation}\label{general}\xymatrix{
\cdot \ar[r]\ar@<-1ex>[d]_{(u,v,w)}& \cdot\ar@<-1ex>[l]_{(\alpha, 0, \gamma)} \ar[d]  \\
\cdot \ar@<-1ex>[r]_{\mathrm{contractive}\; (u\alpha, 0, u\gamma)} \ar[u]& \cdot \ar[l]\ar@<-1ex>[u]_{\mathrm{contractive} \; (u\alpha,\; \alpha v\gamma,\; \max\{w\alpha, 1 + \alpha v\gamma\})}}\end{equation}
and
\begin{equation}\label{101}\xymatrix{
\cdot \ar[r]\ar@<-1ex>[d]_{\mathrm{contractive}\; (u,v,w)}& \cdot\ar@<-1ex>[l]_{(1, 0, 1)} \ar[d]  \\
\cdot \ar@<-1ex>[r]_{(1, 0, 1)} \ar[u]& \cdot \ar[l]\ar@<-1ex>[u]_{\mathrm{contractive} \; (u,\; v,\; \max\{w, 1 + v\})}}\end{equation}

\begin{proof} To obtain $\overline{j'}$ observe that the diagram
\begin{equation}\label{iup}
\begin{CD}
A@>i>> B\\
@VjVV  @VV{1_B + i(\overline{j}j- 1_A)\overline{i}}V\\ X@>>{i\overline{j}}>B
\end{CD}
\end{equation}
is commutative, and thus the universal property of the push-out yields the existence of a unique operator $\overline{j'} : \PO \to B$ such that
\begin{enumerate}
\item[(3.a)] $\overline{j'} \; i' = i \; \overline{j}$;
\item[(3.b)] $\overline{j'} \; j' = 1_B + i(\overline{j}\; j- 1_A)\; \overline{i}$;
\item[(3.c)] $\|\overline{j'} \| \leq \max\{\|i\overline{j}\|, \|1_B + i(\overline{j}\; j- 1_A)\; \overline{i}\|\}.$
\end{enumerate}
Notice that by properties of the push-out construction, $\|i'\|\leq 1$ and $\|j'\|\leq 1$ independently of the norms of $i$ and $j$. To estimate the norm of their inverse maps observe that for every $x\in X$,
\begin{eqnarray*}
\|x\|&\leq &\inf_{a\in A}\{\|x - ja\| +\|ja\|\}\\
&\leq&\inf_{a\in A}\{\|x - ja\| + u\alpha  \|ia\|\}\\
&\leq&u\alpha \|i'(x)\|_{PO};
\end{eqnarray*}
thus $(u\alpha)^{-1}\|x\|\leq\|i'(x)\|\leq \|x\|$. Except when $i$ is an into isometry and $\|j\|\leq 1$, in which case
\begin{eqnarray*}
\|x\|&= &\inf_{a\in A}\{\|x - ja\| +\|ja\|\}\\
&\leq&\inf_{a\in A}\{\|x - ja\| + \|ia\|\}\\
&=&\|i'(x)\|_{PO},
\end{eqnarray*}
and thus $\|x\|= \|i'(x)\|_{PO}$. In the same way, for every  $b\in B$,
\begin{eqnarray*}
\|b\|&\leq &\inf_{a\in A}\{\|b + ia\| +\|i a\|\}\\
&\leq&\inf_{a\in A}\left\{\|b + ia\| + \alpha u\|ja\|\right\},
\end{eqnarray*}
and thus $(u\alpha)^{-1}\|b\| \leq\|j'(b)\|_{PO}\leq \|b\|$. To obtain $\overline{i'}$, since the diagram
\begin{equation}
\begin{CD}
A@>i>> B\\
@VjVV  @VV{j\overline{i}}V\\ X@>>{1_X}>X,
\end{CD}
\end{equation}
is commutative,  the universal property of the push-out yields a unique operator $\overline{i'}: \PO \to X$ such that
\begin{enumerate}
\item[(4.a)] $\overline{i'}\; i' = 1_X$;
\item[(4.b)] $\overline{i'}\; j' = j\;\overline{i}$;
\item[(4.c)]$\|\overline{i'}\|\leq \max \{\|1_X\|, \|j\; \overline{i}\| \}.$
\end{enumerate}

Let us check that the just defined projection $\overline{i'}$ and $\varepsilon$-projection
$\overline{j'}$ make commutative the original diagram (\ref{thediagram}). To this end, it is enough to observe that since diagram
\begin{equation}\label{p} \begin{CD}
A@>>i> B\\
@VVjV  @VV{\overline{j} j \overline{i}}V\\ X@>>{\overline{j}}>A.
\end{CD}
\end{equation}
is commutative, the universal property of the push-out yields a unique operator $\gamma : \PO \to A$ such that
\begin{enumerate}
\item[(5.a)] $\gamma i' = \overline{j}$
\item[(5.b)] $\gamma j' = \overline{j} j \overline{i}$.
\item[(5.c)] $\|\gamma\|\leq \max\{\|\overline{j}\|, \|\overline{j}\;j\;\overline{i}\|\}$.
\end{enumerate}

Since $\overline{j} \; \overline{i'} i' = \overline{j} $ and $\overline{j} \; \overline{i'}\;j' = \overline{j} j \overline{i}$ (by (4.b)), the uniqueness (see (5.a) and (5.b)) yields
$\gamma = \overline{j} \; \overline{i'}$. On the other hand, also $\gamma = \overline{i} \; \overline{j'}$ since $\overline{i}\; \overline{j'}\; i' = \overline{j}$ (by (3.a)) and $\overline{i} \; \overline{j'} j' =  \overline{i}(1_B + i(\overline{j}j- 1_A)\overline{i}) = \overline{i} + (\overline{j} \;j - 1_A) \overline{i} = \overline{j}\;  j \;\overline{i}$. \end{proof}


Modifying the proof above in an obvious way we obtain the result of Kubis \cite[Section 5]{kubis} (see also \cite[Lemma 4.1]{garbula} and the comments before the lemma) that in a push-out
diagram
$$
\begin{CD}
A@>>i> B\\
@VjVV @VVj'V\\
C@>>i'>\PO
\end{CD}$$
in which both $i,j$ have complemented ranges via projections $p,q$ then also $i',j'$ have complemented ranges via projections
$p',q'$ yielding a diagram
$$
\begin{CD}
A@>{\stackrel{p}\longleftarrow}>i> B\\
@VjV{\uparrow q}V @Vj'V{\uparrow q'}V\\
C@>{\stackrel{p'}\longleftarrow}>i'>\PO
\end{CD}$$
commutative in both directions i.e., $pq'=qp'$ and, moreover, such that $jp=p'j'$ and $iq=q'i'$.
One has to proceed just as the proof of Lemma \ref{amostdpo} but, in diagram (\ref{iup}), take $1_B$ instead of $i(qj -1_A)p + 1_B$ and, in diagram (\ref{p}), take $p$ instead of $qjp$.

\subsubsection{The complementation feature of multiple push-out} Let us check now that almost complementation is preserved in
almost complemented push-out with several factors:

\begin{lema}\label{POprojection} Let $(i_1, \overline{i_1}): A_1\to B_1$ and $(i_2, \overline{i_2}): A_2\to B_2$ be $(1,0,1)$-arrows. Let $(j_1, \overline{j_1}): A_1\to X$ be a $(u, v, w)$-arrow and let $j_2: A_2\to X$ be an operator. Consider the push-out diagram
\begin{equation}\label{POcop}
\begin{CD}
A_1 \oplus_1 A_2 @>{i_1\oplus \;i_2}>> B_1\oplus_1 B_2\\
@V{j_1+ j_2} VV @VV{J}V\\
X@>>{(i_1\oplus \;i_2)'}>\PO.
\end{CD}
\end{equation}

The restriction $J_{|B_1}$ admits an arrow $\overline{J_{|B_1}}: \PO\to B_1$ so that $(J_{|B_1}, \overline{J_{|B_1}})$ is a contractive $\left(u, v, \max\{w, 1+v\}u\|j_2\|\right)$-arrow.
In particular, if $(j_1, \overline{j_1})$ is a contractive $(u, v, w)$ arrow then $(J_{|B_1}, \overline{J_{|B_1}})$ is a contractive $\left(u, v, \max\{w, 1+v\}\|j_2\|\right)$-arrow.\medskip

Moreover, $\overline {i_1} \overline{J_{|B_1}} = \overline{j_1} \overline{(i_1\oplus i_2)'}$.

\end{lema}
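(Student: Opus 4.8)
The plan is to produce $\overline{J_{|B_1}}$ through the universal property of the push-out, exactly as $\overline{j'}$ was obtained in Lemma \ref{amostdpo}, but landing in the single summand $B_1$. Write $i=i_1\oplus i_2$, $j=j_1+j_2$, so that $J=j'$ and $(i_1\oplus i_2)'=i'$ are the two legs of $\PO$ and $\overline i=\overline{i_1}\oplus\overline{i_2}$ satisfies $\overline i\,i=1$. First I would define two operators into $B_1$: the ``projection'' leg $\psi=i_1\overline{j_1}:X\to B_1$ and the ``inclusion'' leg $\phi:B_1\oplus_1 B_2\to B_1$ by $\phi(b_1,b_2)=b_1+i_1(\overline{j_1}j_1-1_{A_1})\overline{i_1}b_1+i_1\overline{j_1}j_2\overline{i_2}b_2$, and then verify the compatibility $\phi\,i=\psi\,j$ on $A_1\oplus_1 A_2$. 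Separating the two variables, this reduces to $\phi(i_1a_1,0)=i_1\overline{j_1}j_1a_1$ and $\phi(0,i_2a_2)=i_1\overline{j_1}j_2a_2$, both of which hold precisely because the exactness $\overline{i_1}i_1=1_{A_1}$ and $\overline{i_2}i_2=1_{A_2}$ of the $(1,0,1)$-arrows makes the correction terms collapse. The universal property then yields a unique $\overline{J_{|B_1}}:\PO\to B_1$ with $\overline{J_{|B_1}}\,J=\phi$ and $\overline{J_{|B_1}}\,i'=\psi$.

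With $\overline{J_{|B_1}}$ in hand the arrow axioms are mostly bookkeeping. Since $\overline{J_{|B_1}}\,J_{|B_1}(b_1)=\phi(b_1,0)=b_1+i_1(\overline{j_1}j_1-1_{A_1})\overline{i_1}b_1$ and $\|i_1\|,\|\overline{i_1}\|\le 1$ while $\|\overline{j_1}j_1-1_{A_1}\|<v$, one gets $\|\overline{J_{|B_1}}\,J_{|B_1}-1_{B_1}\|<v$, i.e. $\overline{J_{|B_1}}$ is a $v$-projection along $J_{|B_1}$. That $J_{|B_1}$ is a contractive $u$-isometry comes from the push-out norm formula as in Lemma \ref{isom}(c): here $\|J_{|B_1}(b_1)\|=\inf_{a_1,a_2}\bigl(\|b_1-i_1a_1\|+\|a_2\|+\|j_1a_1+j_2a_2\|\bigr)$ is $\le\|b_1\|$ by taking $a=0$, and is bounded below by $u^{-1}\|b_1\|$ using that $i_1$ is isometric and $j_1$ a $u$-isometry. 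Finally $\|\overline{J_{|B_1}}\|\le\max\{\|\phi\|,\|\psi\|\}$, and estimating the three constituents by $\|b_1\mapsto b_1+i_1(\overline{j_1}j_1-1)\overline{i_1}b_1\|\le 1+v$, $\|i_1\overline{j_1}j_2\overline{i_2}\|\le w\|j_2\|$ and $\|\psi\|\le w$ gives the stated $\max\{w,1+v\}\|j_2\|$, the extra factor $u$ in the non-contractive case arising from the normalisation of $(j_1,\overline{j_1})$.

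For the ``Moreover'' identity I would argue by uniqueness in the very same universal property, comparing $\overline{i_1}\,\overline{J_{|B_1}}$ and $\overline{j_1}\,\overline{(i_1\oplus i_2)'}$ as maps $\PO\to A_1$: two operators out of $\PO$ coincide once they agree after precomposition with both legs $J$ and $i'$. Precomposing with $J$ one computes $\overline{i_1}\phi(b_1,b_2)=\overline{j_1}j_1\overline{i_1}b_1+\overline{j_1}j_2\overline{i_2}b_2$ (again using $\overline{i_1}i_1=1$), which equals $\overline{j_1}\,(j\,\overline i)(b_1,b_2)=\overline{j_1}\,\bigl(\overline{(i_1\oplus i_2)'}\,J\bigr)(b_1,b_2)$ because relation (4.b) of Lemma \ref{amostdpo} gives $\overline{(i_1\oplus i_2)'}\,J=j\,\overline i$. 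Precomposing with $i'$ one gets $\overline{i_1}\psi=\overline{i_1}i_1\overline{j_1}=\overline{j_1}$, matching $\overline{j_1}\,\overline{(i_1\oplus i_2)'}\,i'=\overline{j_1}\cdot 1_X$; uniqueness then forces the equality.

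The main obstacle is not any single hard estimate but the exact choice of $\phi$ and $\psi$: the correction term $i_1(\overline{j_1}j_1-1_{A_1})\overline{i_1}$ is dictated by the requirement that $\phi i=\psi j$ hold \emph{on the nose}, and the whole construction hinges on the exactness $\overline{i_k}i_k=1$ of the two $(1,0,1)$-arrows. The one genuinely quantitative point to watch is the $u$-isometry lower bound for $J_{|B_1}$, which is where the interaction of $j_2$ with the first block must be controlled; everything else reduces to taking maxima of operator norms.
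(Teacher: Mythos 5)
Your construction is correct, but it follows a genuinely different route from the paper's. The paper factorizes the square in the statement through two successive push-outs: first the almost-complemented push-out of $(i_1,\overline{i_1})$ and $(j_1,\overline{j_1})$ as in Lemma \ref{amostdpo}, producing a space $P_1$ with arrows $(i_1',\overline{i_1'})$ and $(j_1',\overline{j_1'})$, and then the plain push-out of $i_2$ along $i_1'j_2$, producing a space $P_2$ with $\overline{i_2'}\,i_2'=1_{P_1}$; the universal property of $\PO$ then yields a norm-one comparison operator $\tau:\PO\to P_2$ with $\tau J=i_2'j_1'+(i_1'j_2)'$, and the projection is taken to be the composite $\overline{J_{|B_1}}=\overline{j_1'}\;\overline{i_2'}\;\tau$, so that the $v$-estimate collapses to $\|\overline{j_1'}\,j_1'-1_{B_1}\|\leq v$. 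You instead apply the universal property of $\PO$ once, to the explicitly written compatible pair $\psi=i_1\overline{j_1}$ and $\phi(b_1,b_2)=b_1+i_1(\overline{j_1}j_1-1_{A_1})\overline{i_1}b_1+i_1\overline{j_1}j_2\overline{i_2}b_2$, which is exactly the two-summand generalization of square (\ref{iup}) in the proof of Lemma \ref{amostdpo}; the identities $\overline{i_1}i_1=1_{A_1}$ and $\overline{i_2}i_2=1_{A_2}$ make the pair compatible on the nose, and the $v$-projection property and the norm bound are then read off from $\phi$ directly. Your route is more economical and self-contained (no intermediate spaces, explicit formulas), and your proof of the ``Moreover'' identity --- both sides agree after precomposition with the two legs $J$ and $(i_1\oplus i_2)'$, whose images linearly span $\PO$, hence coincide --- is a welcome expansion of the paper's one-line ``is clear''. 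What the paper's factorization buys is that all correction terms stay hidden inside Lemma \ref{amostdpo}, so that no formula has to be guessed and no compatibility has to be verified by hand.

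One point deserves a warning, although it is a defect you share with the paper's own proof rather than one you introduced: the claim that $J_{|B_1}$ is a contractive $u$-isometry is the only part of the statement that neither argument actually establishes. Your norm formula $\|J(b_1,0)\|=\inf_{a_1,a_2}\left(\|b_1-i_1a_1\|+\|a_2\|+\|j_1a_1+j_2a_2\|\right)$ and the upper bound are right, but the lower bound $u^{-1}\|b_1\|$ does \emph{not} follow from ``$i_1$ isometric and $j_1$ a $u$-isometry'' alone: taking all spaces equal to $\R$, all maps the identity except $j_2=M$, and choosing $a_1=1$, $a_2=-1/M$, one gets $\|J(1,0)\|\leq 1/M$. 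The chain of inequalities really gives the lower bound $\left(u\max\{1,\|j_2\|\}\right)^{-1}\|b_1\|$, so the stated constant $u$ requires $\|j_2\|\leq1$ --- which does hold in the paper's application (in Theorem \ref{KX} the operator $j_2=\sum d$ is contractive), and which is consistent with your closing remark that the interaction of $j_2$ with the first block is the one quantitative point to control; you flag it, but you do not resolve it. The same caveat affects the third constant: both your estimate and the paper's honestly come out as $\max\{w,1+v\}\max\{1,\|j_2\|\}$, matching the stated $\max\{w,1+v\}u\|j_2\|$ only when $\|j_2\|\geq1$ (for $j_2=0$ the stated bound is plainly impossible), so no blame attaches to your bookkeeping there.
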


\begin{proof} Perform first the almost-complemented push-out as in the diagram (\ref{thediagram}) in Lemma \ref{amostdpo} to get
\begin{equation}\label{j1prima}\xymatrix{
A_1  \ar[r]\ar@<-1ex>[d]_{(j_1,\overline{j_1})}& B_1\ar@<-1ex>[l]_{(i_1,\overline{i_1})} \ar[d]  \\
X \ar@<-1ex>[r]_{(i_1',\overline{i_1'})} \ar[u]& P_1 \ar[l]\ar@<-1ex>[u]_{(j_1',\overline{j_1'})} }\end{equation}
in which $(i_1',\overline{i_1'})$ is a contractive $(u, 0, u)$-arrow and $(j_1',\overline{j_1'})$ is a contractive $(u, v, \max\{w, 1+v\})$-arrow. Now make the push-out of the arrows $i_2$ and $i'_1j_2$
\begin{equation}\label{i2prima}\xymatrix{
A_2  \ar[r]\ar@<-1ex>[d]_{i_1' j_2}& B_2\ar@<-1ex>[l]_{(i_2,\overline{i_2})} \ar[d]^{(i_1'j_2)'}  \\
P_1 \ar@<-1ex>[r]_{(i_2',\overline{i_2'})}& P_2 \ar[l]}\end{equation}The map $\overline{i'_2}$ is obtained according to diagram (10) in the proof of Lemma \ref{amostdpo}, in such a way that $\overline{i'_2}i'_2 = 1_{P_1}$ and $\|\overline{i'_2}\|\leq max\{1, \|i'_1j_2\overline{i_2}\|\}$.
On the other hand, since the following square is commutative
\begin{equation}\label{POcop}
\begin{CD}
A_1 \oplus_1 A_2 @>{i_1\oplus \;i_2}>> B_1\oplus_1 B_2\\
@V{j_1+j_2} VV @VV{i_2'j_1' + (i_1'j_2)'}V\\
X@>>{i_2' i_1'}>P_2.
\end{CD}
\end{equation}
there must be a unique operator arrow $\tau: \PO\to P_2$ such that
\begin{enumerate}
\item $\tau (i_1 \oplus i_2)' =  i_2' i_1'$
\item $\tau J = i_2'j_1' + (i_1'j_2)'$
\item $\|\tau\|\leq \max \{ \|i_2' i_1'\|,  \|i_2'j_1' + (i_1'j_2)'\|\}\leq  \max \{\|i_2' i_1'\|, \|i_2'j_1' + (i_1'j_2)'\|\}=1$.
\end{enumerate}
The almost projection is going to be $\overline{J_{|B_1}} = \overline{j_1'}\;\overline{i_2'} \tau: \PO \to B_1$, where $\overline{j_1'}$ has been obtained in diagram (\ref{j1prima}) while $\overline{i_2'}$ has been obtained in diagram (\ref{i2prima}). To check this observe that
\begin{eqnarray*} \|\overline{j_1'}\;\;\overline{i_2'}\; \tau \;J_{|B_1} - 1_{B_1}\| &=&  \|\overline{j_1'}\; \;\overline{i_2'}\; ( i_2'j_1' + (i_1'j_2)')_{|B_1}- 1_{B_1}\| \\
& =& \|\overline{j_1'}\; \;\overline{i_2'}\;  i_2'j_1'- 1_{B_1}\| \\
&=& \|\overline{j_1'}\;  \; j_1'- 1_{B_1}\| \\
&\leq& v. \end{eqnarray*}

Since $\|\overline{j_1'}\;\overline{i_2'} \tau \| \leq \max\{w, 1+v\}u\|j_2\|$ it turns out that $(J_{|B_1}, \overline{J_{|B_1}} )$ is a contractive $\left(u, v, \max\{w, 1+v\}u\|j_2\|\right)$-arrow. If $\|j_1\|\leq 1$ then  $(J_{|B_1}, \overline{J_{|B_1}})$ is a contractive $\left(u, v, \max\{w, 1+v\}\|j_2\|\right)$-arrow. Finally, according again to diagram (10) in the proof of Lemma \ref{amostdpo}, there exists and operator $\overline{(i_1\oplus i_2)'}: PO\to X$ such that $\overline{(i_1\oplus i_2)'}(i_1\oplus i_2)'=1_X$, and the ``moreover" part is clear.\end{proof}

\subsubsection{The almost-push-out}\label{apo}

Garbulinska introduces in \cite[Lemma 3.1]{garbula} a useful correction lemma. Let us show that it can be
understood as an ``almost" push-out construction, which moreover admits an extension to cover the case of almost double arrows.
\begin{lema}[Correction lemma]\label{garblem}\label{compgarbu}$\;$
\begin{itemize}
\item Given a  contractive $1+\varepsilon$-isometry $f: X \to Y$ between Banach spaces, there exists a space $E(f, X, Y)$ and isometries
$i_f : X \to E(f, X,Y)$, $j_f: Y \to
E(f, X, Y)$ such that $\|j_ff -i_f\|\leq \varepsilon$ with the following universal property: for any couple of arrows $k:X\to V$ and
$l:Y\to V$ such that $\|lf - k\|\leq \varepsilon$ there exists a unique arrow $\gamma:E(f, X, Y) \to V$ such that $\gamma i_f = k$ and
$\gamma j_f = l$.

\item Given a contractive $(1 +\varepsilon, \e, 1)$-arrow $(f, \overline f):X\leftrightarrows  Y$ there exist a space $E= E(f, X, Y)$ and double arrows $(i, \overline i) : X \leftrightarrows E$, $(j, \overline j): Y \leftrightarrows E$ making the diagram
$$\xymatrix{
 X \ar@<-1ex>[dd]_{f} \ar@<0.5ex>[rd]_{i} \\
   & E(f, X, Y)\ar@<-2ex>[lu]_{\overline i} \ar@<2ex>[ld]^{\overline j}\\
 Y\ar[uu]_{\overline f} \ar@<-0.5ex>[ru]^{j}}$$
$\varepsilon$-commutative and verifying
also $\overline i j = \overline f$ and $\overline j i = f$.
\end{itemize}
\end{lema}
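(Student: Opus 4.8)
The plan is to realize $E(f,X,Y)$ as an \emph{almost push-out}: a perturbed quotient of $Y\oplus_1 X$ along the graph of $f$, in which the identification of $(fw,-w)$ with $0$ (which would produce the ordinary push-out of $f$ and $1_X$, cf. Lemma \ref{isom}) is \emph{softened} by a penalty proportional to $\varepsilon\|w\|$. Concretely, I would take the vector space $Y\oplus X$ equipped with
$$\|(y,x)\|_E=\inf_{w\in X}\bigl(\|y-fw\|_Y+\|x+w\|_X+\varepsilon\|w\|_X\bigr),$$
and set $i_f(x)=(0,x)$, $j_f(y)=(y,0)$. First I would check this is a genuine norm: along any minimizing sequence the penalty term forces $\|w\|\to0$, which gives positive-definiteness, and a routine two-sided estimate shows $\|\cdot\|_E$ is equivalent to the $\ell_1$-sum norm, so that $E$ is Banach.

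Next I would verify the isometries and the $\varepsilon$-closeness. Testing $w=0$ yields $\|i_fx\|_E\le\|x\|$ and $\|j_fy\|_E\le\|y\|$; for the reverse bounds the two one-sided halves of the $(1+\varepsilon)$-isometry property of $f$ each get used once. For $i_f$, the lower bound $\|fw\|\ge(1+\varepsilon)^{-1}\|w\|$ gives $\|fw\|+\|x+w\|+\varepsilon\|w\|\ge\|x\|+\tfrac{\varepsilon^2}{1+\varepsilon}\|w\|\ge\|x\|$; for $j_f$, contractivity $\|fw\|\le\|w\|$ gives $\|y-fw\|+\|w\|+\varepsilon\|w\|\ge\|y-fw\|+\|fw\|\ge\|y\|$. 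Hence both embeddings are isometric. Finally the choice $w=x$ inside the infimum witnesses $\|j_ffx-i_fx\|_E=\|(fx,-x)\|_E\le\varepsilon\|x\|$, i.e.\ $\|j_ff-i_f\|\le\varepsilon$.

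For the universal property of Part 1, given arrows $k:X\to V$, $l:Y\to V$ with $\|lf-k\|\le\varepsilon$ I would define $\gamma(y,x)=l(y)+k(x)$. It is the unique linear map with $\gamma i_f=k$ and $\gamma j_f=l$, since $i_f(X)+j_f(Y)=E$; and it is bounded because the identity $ly+kx=l(y-fw)+k(x+w)+(lf-k)w$ yields, after taking the infimum over $w$, the estimate $\|\gamma(y,x)\|\le\max(\|k\|,\|l\|,1)\,\|(y,x)\|_E$.

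For the almost-double-arrow version (Part 2) I would keep the same $E$, the same $i,j:=i_f,j_f$ and the injection estimate $\|jf-i\|\le\varepsilon$, and manufacture the two norm-one projections by invoking the universal property \emph{twice}. Feeding in $(k,l)=(1_X,\overline f)$ --- admissible precisely because $\|\overline f f-1_X\|\le\varepsilon$ is the arrow hypothesis --- produces $\overline i:E\to X$ with $\overline i i=1_X$, $\overline i j=\overline f$ and $\|\overline i\|\le\max(1,\|\overline f\|,1)=1$; feeding in $(k,l)=(f,1_Y)$ --- admissible since $\|1_Yf-f\|=0$ --- produces $\overline j:E\to Y$ with $\overline j j=1_Y$, $\overline j i=f$ and $\|\overline j\|\le1$. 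Thus $(i,\overline i):X\leftrightarrows E$ and $(j,\overline j):Y\leftrightarrows E$ are $(1,0,1)$-arrows, the diagram $\varepsilon$-commutes on the injection side, and the mixed identities $\overline i j=\overline f$, $\overline j i=f$ hold exactly, as required by the ``moreover''. The main obstacle I anticipate is the lower isometry estimate for $i_f$: it is exactly there that the \emph{lower} $(1+\varepsilon)$-isometry bound on $f$ and the \emph{penalty} $\varepsilon\|w\|$ must cooperate to give $\ge\|x\|$, and calibrating the penalty is what pins the constant in $\|j_ff-i_f\|$ to $\varepsilon$. By contrast, once the universal property is available Part 2 is essentially formal, its only content being to match the target data $(1_X,\overline f)$ and $(f,1_Y)$ against the admissibility condition $\|lf-k\|\le\varepsilon$, which is where $\|\overline f f-1_X\|\le\varepsilon$ and $\|f\|\le1$ enter.
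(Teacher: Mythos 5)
Your Part 1 is correct and is in substance the paper's own construction: the paper realizes $E(f,X,Y)$ as the subspace $Q\left(X\oplus 0\oplus_1 Y\right)$ of the push-out of $f$ and $\delta_\varepsilon(x)=(x,\varepsilon x)$, and the induced norm there is exactly your penalized norm (indeed your sum-form penalty is what the paper's own isometry estimates implicitly use); your formula $\gamma(y,x)=l(y)+k(x)$ for the universal property matches the paper's factorization through the push-out. The problem is Part 2. In this paper, ``$\varepsilon$-commutative'' for the triangle formed by $(f,\overline f)$, $(i,\overline i)$, $(j,\overline j)$ means \emph{two} estimates: $\|jf-i\|\leq\varepsilon$ \emph{and} $\|\overline f\,\overline j-\overline i\|\leq\varepsilon$ (see the definition of $\varepsilon$-commuting diagrams). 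You verify only the first and declare the rest of Part 2 ``essentially formal''; the projection-side estimate is never addressed. That estimate is precisely what the last portion of the paper's proof is devoted to (the argument that two operators on a push-out which agree on one factor and $\varepsilon$-agree on the other are $\varepsilon$-close), and it is the condition the authors single out immediately after the proof as the feature distinguishing their correction lemma from those of \cite{garbula} and \cite{cgk}. Omitting it removes the actual point of the statement.

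Moreover, the omission cannot be repaired by a routine remark. Once your exact identities $\overline i\,i=1_X$, $\overline i\,j=\overline f$, $\overline j\,i=f$, $\overline j\,j=1_Y$ are imposed, the operator $\overline f\,\overline j-\overline i$ is completely determined on $E=i(X)+j(Y)$: it sends $i(x)+j(y)$ to $(\overline f f-1_X)(x)$. Test it on the vectors $e_x=i(x)-j(fx)$. In your norm $\|e_x\|_E=\varepsilon\|x\|$ exactly (the choice $w=x$ gives the upper bound, and the penalty term gives the matching lower bound), while $(\overline f\,\overline j-\overline i)(e_x)=-(\overline f f-1_X)(x)$, which your hypothesis bounds only by $\varepsilon\|x\|=\|e_x\|_E$. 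So what follows formally from what you proved is $\|\overline f\,\overline j-\overline i\|\leq 1$, not $\leq\varepsilon$; when $\|\overline f f-1_X\|$ is of order $\varepsilon$ the ratio on these vectors really is of order $1$ for your construction. Closing this gap therefore requires a genuinely different argument at exactly this point --- it is here that the paper abandons the universal property and works instead with the push-out structure of $E$ and choices of representatives --- and this is the one step of the lemma that your proposal does not deliver.
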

\begin{proof} Let us first see that there exists a push-out diagram which partially corrects the almost-isometry $f$. To this
purpose, consider the isometric (for $\varepsilon<1$) embedding operator $\delta_\varepsilon : X \to X \oplus_\infty X$,
$\delta_\varepsilon(x) = (x, \varepsilon x)$ and make the push-out square
\begin{equation*} \begin{CD}
X@>{\delta_\varepsilon}>> X\oplus_\infty X\\
@VfVV  @VVf'V\\ Y@>>\delta'>\PO.
\end{CD}
\end{equation*}
By the general properties of the push-out, $f'$ is a $(1+\varepsilon)$-isometry and $\delta'$ is an into isometry.
Recall that $\PO$ is the quotient of $(X\oplus_\infty X)\oplus_1 Y$ via the natural quotient map $Q: (X\oplus_\infty X)\oplus_1 Y \to \PO$ with kernel $X$ that defines the push-out.
We form a subspace of
$\PO$ where $X$ and $Y$ embed isometrically at the cost of loosing commutativity by taking
$$E(f,X,Y) = Q\left( X \oplus_\infty 0 \oplus_1 Y \right)$$
and define the map $s: X\oplus_\infty X\to E(f,X,Y)$ by $s(x,z) = f'(x,0)$. The map $\delta'$ is already well defined as a map
$Y\to E(f,X,Y)$. The resulting square
\begin{equation*}\begin{CD}
X@>{\delta_\varepsilon}>> X\oplus_\infty X\\
@VfVV  @VVsV\\ Y@>>\delta'>E(f,X,Y)
\end{CD}
\end{equation*}
is $\varepsilon$-commutative:
$$\|\delta'f(x) - s \delta_\varepsilon(x)\|= \|\overline{(x, 0, -f(x))}\|\leq \inf_{\omega\in X}\|(x
-\omega, \varepsilon \omega, f(\omega)-f(x)\|\leq \varepsilon \|x\|.$$

Moreover $s\delta_\varepsilon: X\to E(f,X,Y)$ is an into isometry:

\begin{eqnarray*}\|x\| &\leq& \|x\| - \|\omega\| + \|\omega\|-\|f(\omega)\| + \|f(\omega)\|, \; \forall \omega\in
X\\
&\leq&\|x - \omega\| + \varepsilon \|\omega\|+ \|f(\omega)\|\\
&\leq& \|\overline{(x,0,0)}\|_{\PO} = \|s\delta_\varepsilon(x)\|_{E}.\end{eqnarray*}

We must therefore set: $i_f = s\delta_\varepsilon$ and $j_f= \delta'$. \medskip

We prove now the universal property mentioned above: let $k: X \to V$ and $l: Y \to V$ be operators such that $\|lf - k\|\leq
\varepsilon$. The map $t: X\oplus X \to V$ defined by $t(x, z) = k(x) +
\varepsilon^{-1}(lf-k)(z)$ verifies $t\delta_\varepsilon = lf$. By the universal
property of the push-out there exists a unique arrow $\gamma : \PO \to V$ such that $\gamma f' = t$ and $\gamma \delta' = l$.
And the recontractiveion of $\gamma$ to $E(f,X,Y)$ yields  $\gamma i_f(x) = \gamma f' (x,0) = t(x,0) =
k(x)$; while $\gamma j_f=\gamma \delta'= l$.\medskip

The complemented version of the Correction lemma  will follow from the universal
property of the ``almost push-out" applied first to the arrows
$1_X$ and $\overline f: Y \to X$, so we get $\overline i: E \to X$ such that $\overline i j = \overline f
$ and $\overline i i = 1_X$; and then to $f$ and $1_Y$, obtaining $\overline j: E \to Y$ such that $\overline j j = 1_Y$ and $\overline j i = f$. In addition,
$\|\overline f\|\leq \|\overline i\|$ and $\|f\|\leq \|\overline j\|$.\medskip

Now, when one has a push-out diagram
\begin{equation*}
\begin{CD}
\bullet @>\alpha>> \bullet\\
@V \beta VV @VV \beta' V\\
\bullet @> \alpha' >> \PO
\end{CD}
\end{equation*}

and two arrows $\gamma: \PO \to Z$ and $h:\PO\to Z$ so that
$\gamma \beta' = h\beta'$ and $\|\gamma \alpha'- h\alpha'\|\leq \varepsilon$ then $\|\gamma - h\|\leq \varepsilon$: indeed, for
given $(c,b)+\Delta\in \PO$ with $\|(c,b)+\Delta\|\leq 1$ pick a representative $(c_1, b_1)+\Delta$ so that $\|c_1\|\leq 1$.
Since $(c_1, b_1)+\Delta = \alpha'(c_1) + \beta'(b_1)$ one has
$$\|(\gamma - h)((c,b)+\Delta)\| = \|(\gamma - h)((c_1, b_1)+\Delta)\| = \|\gamma \alpha'(c_1) - h\alpha'(c_1) \|\leq
\varepsilon.$$

Thus, since $\overline f \;\overline j \;j = \overline f$ and $\overline f\; \overline j\; i=  \overline f f$ and $\overline i \;j = \overline f$ and $\|\overline i\; i - \overline f \;f\|\leq \varepsilon$, it turns out that $\|\overline f \;\overline j - \overline i\|\leq
\varepsilon$.
\end{proof}

The condition $\|\overline f \; \overline j - \overline i\|\leq \varepsilon$ that we have obtained does not appear in either \cite{garbula} or \cite{cgk}, where the authors only consider the almost commutativity condition
$\|jf -i\|\leq \varepsilon$ for embeddings. Observe that the almost commutativity for embeddings and
projections implies $\|\overline i \;j - \overline f\|\leq \varepsilon$ and $\|\overline j \;i - f\|\leq \varepsilon$ (but there is no equality).
\newpage
\subsection{Perturbation of projections}

\begin{lema}\label{close} Let $A$ be an $n$-dimensional subspace of $E$ which is complemented by some projection $p$ of norm $C$. Let
$\delta= \mathrm{dist}(A, \ell_1^n)$. Let $\{a_1,\dots, a_n\}$ be a basis for $A$ so that $ \delta^{-1} \sum |\lambda_i|\leq \|\sum
\lambda_ia_i\|\leq \sum|\lambda_i|$. Given $0<\varepsilon<1/3$, if $\|x_i-a_i\|\leq \frac{\varepsilon}{\delta C}$ then the map $\tau a_i=x_i$ is a $(1+\varepsilon)$-isometry and the space $X=[x_1,\dots,x_n]$ is:
\begin{enumerate}
\item complemented via some projection $p'$ of norm at most $C\frac{1-\varepsilon^2}{1-3\varepsilon}$ for which
$$\|p' - \tau p\|\leq \varepsilon\frac{(1+\varepsilon)^2}{1 - \varepsilon}C.$$
\item In particular, $\frac{1}{1+\e}\tau$ is a contractive $(1+\e)^2$-isometry with projection $(1+\e)p'$ having norm at most $C\frac{(1+\varepsilon)(1-\varepsilon^2)}{1-3\varepsilon}$
and so that
$$\|(1+\e)p' - \frac{1}{1+\e}\tau p\|\leq C3\e.$$
\end{enumerate}
\end{lema}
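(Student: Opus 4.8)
The plan is to follow the classical perturbation scheme of \cite[Thm.~1.a.9]{lindtzaf}: do not build a projection onto $X$ from scratch, but instead conjugate the given projection $p$ by a small automorphism of $E$ that carries $A$ onto $X$. First I would record the basic estimate on $\tau$. Since $\tau(\sum\lambda_i a_i)-\sum\lambda_i a_i=\sum\lambda_i(x_i-a_i)$, the hypothesis $\|x_i-a_i\|\le \varepsilon/(\delta C)$ together with the normalisation $\sum|\lambda_i|\le \delta\,\|\sum\lambda_i a_i\|$ yields $\|\tau a-a\|\le (\varepsilon/C)\|a\|\le \varepsilon\|a\|$, using $C\ge 1$ (valid since $p$ is a nonzero idempotent). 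Hence $\|\tau\|\le 1+\varepsilon$, and from $\|\tau a\|\ge(1-\varepsilon)\|a\|$ the map $\tau\colon A\to X$ is invertible with $\|\tau^{-1}\|\le (1-\varepsilon)^{-1}$; this is what ``$\tau$ is a $(1+\varepsilon)$-isometry'' amounts to here.

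For the construction I would write $\hat p\colon E\to E$ for the idempotent with range $A$ (so $\|\hat p\|=\|p\|=C$), regard $\tau$ as a map $A\to E$ with values in $X$, and set $T=\mathrm{Id}_E+(\tau-\iota_A)\hat p$, where $\iota_A\colon A\hookrightarrow E$ is the inclusion. Then $T-\mathrm{Id}_E=(\tau-\iota_A)\hat p$ has norm at most $(\varepsilon/C)\cdot C=\varepsilon<1$, so the Neumann series makes $T$ an automorphism with $\|T\|\le 1+\varepsilon$ and $\|T^{-1}\|\le (1-\varepsilon)^{-1}$; since $\hat p$ restricts to the identity on $A$ one gets $T|_A=\tau$, hence $T(A)=X$. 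I would then define $p':=T\hat p\,T^{-1}$, which is idempotent with range $T(A)=X$.

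The estimates for part (1) are now routine. The norm bound is $\|p'\|\le\|T\|\,\|\hat p\|\,\|T^{-1}\|\le C\frac{1+\varepsilon}{1-\varepsilon}$, and one checks $\frac{1+\varepsilon}{1-\varepsilon}\le\frac{1-\varepsilon^2}{1-3\varepsilon}$ for $\varepsilon<1/3$ (equivalent to $0\le\varepsilon+\varepsilon^2$), giving the stated bound. For the distance I would exploit the identity $\tau p=T\hat p$ (both maps send $e\mapsto\tau(pe)$, using $T|_A=\tau$), so that $p'-\tau p=T\hat p\,(T^{-1}-\mathrm{Id}_E)=-T\hat p\,T^{-1}(T-\mathrm{Id}_E)$ and therefore $\|p'-\tau p\|\le C\frac{1+\varepsilon}{1-\varepsilon}\,\varepsilon\le \varepsilon\frac{(1+\varepsilon)^2}{1-\varepsilon}C$. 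Choosing the conjugate $T\hat p\,T^{-1}$ rather than the more naive candidate $(\tau p|_X)^{-1}\tau p$ is important here: the latter gives the same norm bound but a strictly worse distance estimate, so it would not reach the quoted constant.

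Finally, part (2) follows by rescaling. By Lemma \ref{casiequiv}(2) the map $\frac{1}{1+\varepsilon}\tau$ is a contractive $(1+\varepsilon)^2$-isometry, and the projection attached to it is $(1+\varepsilon)p'$, of norm at most $(1+\varepsilon)\|p'\|\le C\frac{(1+\varepsilon)(1-\varepsilon^2)}{1-3\varepsilon}$. For the last estimate I would split $(1+\varepsilon)p'-\frac{1}{1+\varepsilon}\tau p=(p'-\tau p)+\varepsilon p'+\frac{\varepsilon}{1+\varepsilon}\tau p$ and bound the three summands separately using the part~(1) estimates together with $\|\tau p\|\le(1+\varepsilon)C$. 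The main obstacle is precisely this last constant: each of the three summands contributes $\varepsilon C$ only to leading order, so the clean bound $3\varepsilon C$ is the leading-order sum $\varepsilon+2\varepsilon$, valid in the regime $\varepsilon<1/3$ where the denominators $1-3\varepsilon$ stay bounded away from zero; matching the quoted constants exactly, rather than up to an absolute factor, is the one genuinely fiddly point of the argument.
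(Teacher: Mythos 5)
Your proof of part (1) is correct, but it takes a genuinely different route from the paper's. The paper never builds an automorphism of $E$: it checks that $\|(1_E-\tau p)_{|X}\|\leq\mu:=\varepsilon\frac{1+\varepsilon}{1-\varepsilon}<1$, inverts the restriction $(\tau p)_{|X}$ by a Neumann series, and takes $p'=(\tau p)_{|X}^{-1}\tau p$ --- precisely the ``naive candidate'' you discard. Your conjugation $p'=T\hat p\,T^{-1}$, with $T=\mathrm{Id}_E+(\tau-\iota_A)\hat p$, is closer to the actual argument of Lindenstrauss--Tzafriri, and it buys something real: via the identity $\tau p=T\hat p$ you get $\|p'-\tau p\|\leq\varepsilon C\frac{1+\varepsilon}{1-\varepsilon}$, sharper than the bound $\varepsilon C\frac{(1+\varepsilon)^2}{1-\varepsilon}$ in the statement, whereas the Neumann-series route only yields $\varepsilon C\frac{(1+\varepsilon)^2}{1-2\varepsilon-\varepsilon^2}$, which is \emph{strictly weaker} than the statement's constant (the paper then relaxes it further to a $1-3\varepsilon$ denominator). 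So your side remark about the two candidates is accurate, and it in fact applies to the paper's own proof of part (1). (The two constructions produce different projections $p'$ in general, which is harmless: each argument carries its own $p'$ into part (2).)

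The one genuine gap is the one you flagged yourself, and it is real: your three bounds sum to $\varepsilon C\left(\frac{2(1+\varepsilon)}{1-\varepsilon}+1\right)=\frac{(3+\varepsilon)\varepsilon}{1-\varepsilon}\,C = 3\varepsilon C+\frac{4\varepsilon^2}{1-\varepsilon}C$, which exceeds $3\varepsilon C$ for every $\varepsilon\in(0,1/3)$ and approaches it only as $\varepsilon\to0$; ``valid to leading order'' is not a proof of the displayed inequality. You should know, however, that the paper's proof of part (2) has exactly the same defect: its chain ends with the claim $\frac{\varepsilon(1+\varepsilon)^3}{1-3\varepsilon}+2\varepsilon+\varepsilon^2\leq3\varepsilon-\varepsilon^2$, which is false for every $\varepsilon\in(0,1/3)$, since the left-hand side is at least $3\varepsilon+4\varepsilon^2$. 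So the constant $3\varepsilon C$ is attained by neither argument; both prove a bound of the form $3\varepsilon C+O(\varepsilon^2)C$ (yours, uniformly on $\varepsilon<1/3$, gives at worst $5\varepsilon C$). Since the later uses of the lemma (in Lemma \ref{compfelix} and Theorem \ref{KX}) only require a bound that is a fixed multiple of $\varepsilon$, this is best read as a defect of the stated constant rather than of your method; a literally correct version of part (2) should either carry the explicit bound $\frac{(3+\varepsilon)\varepsilon}{1-\varepsilon}C$ or weaken $3\varepsilon C$ to $5\varepsilon C$.
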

\begin{proof} The operator $\tau: A\to X$ that sends $\tau(a_i)=x_i$ is a $(1+\varepsilon)$-isometry. And if $p:E\to A$ is a norm-one
projection, on every $x=\sum \lambda_ix_i \in X$ one has

$$\|\tau px- x\| = \|\tau p(\sum \lambda_ix_i) - \sum \lambda_i \tau pa_i\|\leq (1+\varepsilon)\|\sum \lambda_i(x_i-a_i)\| \leq
\varepsilon\frac{1+\varepsilon}{1-\varepsilon}\|x\|.$$
The estimates now are as in Lemma \ref{casiequiv} (4). We call $\mu =  \varepsilon\frac{(1+\varepsilon)}{1-\varepsilon}$. Since $(1_E-\tau p)_{|X}$ has norm $\mu<1$ for $\varepsilon < 1/3$, then $\tau p_{|X} = 1_E - (1_E-\tau p_{|X})$
is invertible and its inverse has norm at most $1 + \mu + \mu^2 + \cdots = \frac{1}{1-\mu}$. So, $(\tau p)_{|X} $ is an isomorphism
and $\|{(\tau p)_{|X}}^{-1}\|\leq \frac{1}{1-\mu}$. It turns out that $p' = {(\tau p)_{|X}}^{-1} \tau p$ is a projection onto $X$ since $$p'^2 = {(\tau p)_{|X}}^{-1}\tau p {(\tau p)_{|X}}^{-1}\tau p =
{(\tau p)_{|X}}^{-1}(\tau p)_{|X} {(\tau p)_{|X}}^{-1}\tau p = {(\tau p)_{|X}}^{-1}\tau p,$$ with norm at most $\frac{1 + \varepsilon}{1
-\mu}.$ Moreover,
\begin{eqnarray*} \|p' - \tau p\| &=& \|(\tau p)_{|X}^{-1} \tau p- \tau p\| \\
&\leq &\|(\tau p)_{|X}^{-1} - 1_X \|\|\tau p\| \\
&\leq &\|(\tau p)_{|X}^{-1}\|\|1_X - (\tau p)_{|X} \|\|\tau\| \\
&\leq & \frac{1}{1 - \mu} \mu (1+\varepsilon)\\
&\leq & \frac{\e(1+\e)^2}{1 - 3\e}\\
&\leq & \frac{4\e}{1 - 3\e}.
\end{eqnarray*}\end{proof}

The ``in particular" estimate easily follows:

\begin{eqnarray*}\|(1+\e)p' - \frac{1}{1+\e}\tau p\|&=& (1+\e) \|p' - \frac{1}{(1+\e)^2}\tau p\|\\
&\leq&(1+\e) \left( \|p' - \tau p\| + \left( 1- \frac{1}{(1+\e)^2}\right)\| \tau p\|\right)\\
&\leq&(1+\e) \left( \|p' - \tau p\| + \left( 1- \frac{1}{(1+\e)^2}\right)\| \tau p\|\right)\\
&\leq& (1+\e) \left( \frac{\e(1+\e)^2}{1 - 3\e}C + \frac{(1+\e)^2 -1}{(1+\e)^2}(1+\e)C\right)\\
&\leq& C \left( \frac{\e(1+\e)^3}{1 - 3\e} +  2\e +\e^2 \right)\\
&\leq& C ( 3\e - \e^2 )\\
&\leq& C 3\e.
\end{eqnarray*}

\subsection{Countable dense sets of double arrows between finite-dimensional spaces}\label{distance} To produce a separable space as output a
basic ingredient is to have a countable set of double arrows between finite dimensional spaces that is ``dense". To
this end, consider for fixed $n\leq k$ the set of double arrows
$$\mathcal U_{n,k} = \{(f, \overline f): A\leftrightarrows B\;\quad \dim A =n;\quad \dim B = k\}$$
in which elements are identified as: $(f, \overline f) \sim (g, \overline g)$ when there are surjective isometries $a : A\to A'$ and $b : B\to B'$ such that
$bf =ga $ and $a\overline f= \overline g b$. We call $\mathcal U(n,k)$ the quotient space endowed with the metric induced by
$$d((f, \overline f), (g, \overline g)) = \inf \{\log(1+\varepsilon)>0: \exists a,b\;(1+\varepsilon)-\mathrm{onto\; isometries}:  bf =ga  \;\mathrm{and}\; a\overline f= \overline g b\}.$$

One has:

\begin{lema}\label{metricdouble} The space $\mathcal U(m, l)$ is a compact metric space for all $m, l$.
\end{lema}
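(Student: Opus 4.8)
The plan is to first verify that $d$ is a genuine metric on the quotient $\mathcal U(m,l)$ and then establish sequential compactness. For the metric axioms, symmetry holds because the inverse of an onto $(1+\varepsilon)$-isometry is again an onto $(1+\varepsilon)$-isometry, and the triangle inequality follows by composing the intertwining pairs $(a,b)$ and $(a',b')$ and using $\log((1+\varepsilon)(1+\varepsilon'))=\log(1+\varepsilon)+\log(1+\varepsilon')$. That $d=0$ forces equivalence is itself a small compactness statement: a sequence of onto $(1+\varepsilon_n)$-isometries $a_n,b_n$ with $\varepsilon_n\to0$ intertwining $(f,\overline f)$ and $(g,\overline g)$ lives in a bounded set of operators between fixed finite-dimensional spaces, so a subsequence converges to a pair of exact onto isometries still intertwining the two arrows, exhibiting the equivalence. (When $m>l$ there are no double arrows and $\mathcal U(m,l)$ is empty, hence trivially compact, so assume $m\le l$.)

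For compactness I would argue sequentially. Start from a sequence of double arrows $(f_n,\overline{f_n}):A_n\leftrightarrows B_n$ with $\dim A_n=m$, $\dim B_n=l$. Using compactness of the Banach--Mazur compactum (equivalently, normalising each space by its John ellipsoid and invoking compactness of $O(m)$ and of the set of John-normalised norms), pass to a subsequence so that $A_n$ and $B_n$ converge to fixed model spaces $A_\infty$ and $B_\infty$; that is, there are onto $(1+\varepsilon_n)$-isometries $\alpha_n:A_n\to A_\infty$ and $\beta_n:B_n\to B_\infty$ with $\varepsilon_n\to0$. Transport the arrows by setting $g_n=\beta_n f_n\alpha_n^{-1}$ and $\overline{g_n}=\alpha_n\overline{f_n}\beta_n^{-1}$. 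Then $g_n$ is a $(1+\varepsilon_n)^2$-isometry, $\|\overline{g_n}\|\le(1+\varepsilon_n)^2$ and $\overline{g_n}g_n=1_{A_\infty}$ exactly. Since these now act between the fixed spaces $A_\infty,B_\infty$ with uniformly bounded norms, a further subsequence yields operator-norm limits $g_n\to g_\infty$ and $\overline{g_n}\to\overline{g_\infty}$; passing to the limit in the defining relations shows $g_\infty$ is an exact isometry, $\|\overline{g_\infty}\|\le1$ and $\overline{g_\infty}g_\infty=1_{A_\infty}$, so $(g_\infty,\overline{g_\infty}):A_\infty\leftrightarrows B_\infty$ is a genuine $(1,0,1)$-arrow representing a point of $\mathcal U(m,l)$.

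The last step, proving $(f_n,\overline{f_n})\to(g_\infty,\overline{g_\infty})$ in $d$, is where the main difficulty sits, because $d$ requires \emph{exact} intertwining by near-isometries whereas the transported $(g_n,\overline{g_n})$ are only approximate double arrows and cannot be fed into $d$ directly. I would therefore build intertwiners by hand: keep $a_n=\alpha_n$ and look for $b_n:B_n\to B_\infty$ with $b_n f_n=g_\infty\alpha_n$ and $\alpha_n\overline{f_n}=\overline{g_\infty}b_n$. Using the splitting $B_n=f_n(A_n)\oplus\ker\overline{f_n}$, the first relation forces $b_n$ on $f_n(A_n)$ (where it is automatically a $(1+\varepsilon_n)$-isometry) and the second forces $b_n(\ker\overline{f_n})\subset\ker\overline{g_\infty}$; the two prescriptions are compatible on $f_n(A_n)$ since $\overline{g_\infty}g_\infty=1$. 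The remaining freedom on $\ker\overline{f_n}$ I would fix by composing $\beta_n$ with the projection $1-g_\infty\overline{g_\infty}$ onto $\ker\overline{g_\infty}$, and then estimate $\|b_n-\beta_n\|\to0$ using $\|g_n-g_\infty\|,\|\overline{g_n}-\overline{g_\infty}\|\to0$ together with the exact relations $\beta_n f_n=g_n\alpha_n$ and $\overline{g_n}\beta_n=\alpha_n\overline{f_n}$. Since $\beta_n$ is a $(1+\varepsilon_n)$-isometry and $b_n$ is a small perturbation of it between equidimensional spaces, $b_n$ is an onto $(1+\delta_n)$-isometry with $\delta_n\to0$, whence $d((f_n,\overline{f_n}),(g_\infty,\overline{g_\infty}))\le\log(1+\max(\varepsilon_n,\delta_n))\to0$. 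This perturbation argument producing the exact intertwiners is the essential obstacle; everything else is compactness in finite dimensions.
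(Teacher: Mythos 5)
Your proof is correct, and at the strategic level it matches the paper's: pass to a subsequence convergent in the Banach--Mazur sense, produce a limit double arrow, and then manufacture \emph{exact} intertwiners (the genuine difficulty, since $d$ requires exact commutation in both directions, which you correctly identify as the essential obstacle). The implementations, however, differ genuinely. The paper realises the limit via an ultraproduct, writing $A=[A_k]_{\U}$, $B=[B_k]_{\U}$, $f=[f_k]$, $\overline f=[\overline{f_k}]$, and then builds the intertwiner $t_k\colon B_k\to B$ from a choice of basis ($t_k$ sends $f_k(a_j)$ to $b_j$ and fixes the complementary basis vectors $b_i$); the exact commutation of the \emph{projections} under this basis-built $t_k$ is asserted rather than checked, and it silently requires the complementary vectors to sit compatibly with $\ker\overline f$. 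You avoid ultraproducts altogether: you transport the arrows to fixed model spaces, take operator-norm limits there, and construct $b_n$ from the canonical splitting $B_n=f_n(A_n)\oplus\ker\overline{f_n}$, using $1-g_\infty\overline{g_\infty}$ on the kernel part. This makes compatibility with the projections automatic --- your construction is designed exactly to enforce $b_n(\ker\overline{f_n})\subset\ker\overline{g_\infty}$, which is precisely the point the paper glosses over --- and reduces everything to the estimate $\|b_n-\beta_n\|\to 0$, which the exact relations $\beta_n f_n=g_n\alpha_n$ and $\overline{g_n}\beta_n=\alpha_n\overline{f_n}$ indeed deliver (on $f_n(A_n)$ the difference is controlled by $\|g_\infty-g_n\|$, on $\ker\overline{f_n}$ by $\|\overline{g_\infty}-\overline{g_n}\|$, and the two projections of the splitting have norms at most $1$ and $2$ uniformly). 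You are also more complete on a point the paper omits entirely: verifying that $d$ is a metric on the quotient, in particular that $d=0$ forces equivalence, itself a small compactness argument. What the paper's ultraproduct route buys is brevity and uniformity with the ultraperturbation arguments used elsewhere in the paper; what your route buys is an elementary, self-contained argument in which every exactness claim is actually verified.
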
\begin{proof}
Let $(A_k,B_k, f_k, \overline {f_k})$ be a sequence. In the Banach-Mazur distance --for spaces-- and the operator norm --for operators--
there is a subsequence (no need to relabel) so that $\lim A_k = A$, $\lim B_k = B$, $\lim f_k = f$ and
$\lim \overline {f_k} = \overline{f}$. There is no loss of generality assuming that the almost isometries that yield the Banach-Mazur distance are the
identity. Which in particular means that if one fixes a basis in each $A_k$ and
$e_j^k$ is the j-th element in $A_k$ then  $e_j^k\to a_j$, the elements
$a_j$ form a basis for $A$ and $f_k(a_j) \to b_j$ form a basis for $f(A)$ in $B$, which we complete with as many $b_i's$ as
necessary. Let $\U$ be  a free ultrafilter on $\N$. One has $A=[A_1, A_2, \dots, A_n,... ]_\U$ and $B=[B_1, B_2, \dots,
B_n,...]_\U$. The map $f=[f_k]$ is thus an isometry between them and  $\overline{f}=[\overline {f_k}]$ a 1-projection. Moreover, given a finite dimensional space $F$ one has $F=[F]_\U$ and thus one can identify $A_k$ with the its ultrapower $[A_k]_\U$ and  $B_k$ with $[B_k]_\U$. In this way, the formal identity $1_k: \; \; A_k = [A_k]_\U \to [A_n]_\U = A$ is a
$1+\mathrm{dist}(A_k,A)$-isometry. To check that $(f, \overline{f}): A \leftrightarrows B$ is the limit of $(f_k, \overline{f_k})$
we set $1_k$ on the left and do as follows on the right:
given $k$, we call $t_k: B_k \to B$ the $1+\e$-isometry that fixes all $b_i$ while sending $f_k(a_j)$ to $b_j$ (of course that
$\e$ depends on $k$, but goes to $0$ when $k$ goes to infinity). Form $[t_k]$ and observe that the diagram

\begin{equation*}
\begin{CD}
A_k@>{\stackrel{\overline {f_k}}\leftarrow}>f_k> B_k\\
@V1_k VV @VV[t_k] V\\
[A_n]_\U@>{\stackrel{[\overline {f_k}]}\leftarrow}> [f_n] > [B_n]_\U
\end{CD}
\end{equation*}
is commutative in both directions.\end{proof}

Now, observe that there is no loss of generality in assuming that the two $(1+\varepsilon)$-isometries $a, b$ in the definition of the distance $d(\cdot, \cdot)$ at the beginning of section \ref{distance} are
contractive $(1+\varepsilon)$-isometries: indeed, given $a, b$ so that $bf =ga $ and $a\overline f= \overline g b$ one can set
$a'=\frac{1}{1+\varepsilon}a$ and $b'=\frac{1}{1+\varepsilon}b$, who still satisfy  $b'f =ga' $ and  $a'\overline f= \overline g b'$. For the same reason, one can
also make $a^{-1}, b^{-1}$ contractive  $(1+\varepsilon)$-isometries. Thus, since metrizable compacta are separable we get:

\begin{lema}\label{denseness} There is a countable set $\mathfrak U$ of double arrows between finite dimensional spaces with the
following property:
given a double arrow $(w, \overline w): A\leftrightarrows B$ between finite dimensional spaces and $\varepsilon>0$, there is
$(u, \overline u): A_u \leftrightarrows B_u$ in $\mathfrak U$,  and surjective contractive $(1+ \e)$-isometries
$a: A_u\to A$ and $b: B_u\to B$ making the square
\begin{equation}\label{wu}
\begin{CD}
A_u@>u>> B_u\\
@Va VV @VVb V\\
A@> w >> B
\end{CD}
\end{equation}
commutative both directions; i.e., $wa = bu$ and $a\overline u  = \overline w b$.
\end{lema}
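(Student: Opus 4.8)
The plan is to extract $\mathfrak U$ from the compact metric spaces $\mathcal U(m,l)$ furnished by Lemma \ref{metricdouble}, exploiting that a compact metric space is separable. Since in any double arrow $(w,\overline w): A\leftrightarrows B$ the map $w$ is an isometric embedding of $A$ into the finite-dimensional space $B$, one automatically has $\dim A\leq \dim B$; thus it suffices to treat pairs $(m,l)$ with $m\leq l$. For each such pair I would use separability of $\mathcal U(m,l)$ to choose a countable dense subset, pick one representative double arrow for each of its classes, and set $\mathfrak U$ to be the (countable) union of all these representatives over all $m\leq l$.

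Next I would verify the density property. Given $(w,\overline w): A\leftrightarrows B$ with $\dim A=m$ and $\dim B=l$, its class lies in $\mathcal U(m,l)$, and by density there is a representative $(u,\overline u): A_u\leftrightarrows B_u$ in $\mathfrak U$ whose class lies within distance $\log(1+\e)$ (in fact within a slightly smaller threshold, fixed below) of the class of $(w,\overline w)$. Unwinding the infimum in the definition of $d(\cdot,\cdot)$ then produces surjective $(1+\e)$-isometries $a: A_u\to A$ and $b: B_u\to B$ with $wa=bu$ and $a\overline u=\overline w b$. Here one must match roles carefully against the conventions of the metric, taking $(f,\overline f)=(u,\overline u)$ and $(g,\overline g)=(w,\overline w)$, so that the defining relations $bf=ga$ and $a\overline f=\overline g b$ read $bu=wa$ and $a\overline u=\overline w b$ with $a:A_u\to A$, $b:B_u\to B$. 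This is exactly the two-sided commutativity of square (\ref{wu}).

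Finally I would upgrade $a,b$ to contractive isometries via the rescaling observation recorded just before the statement: replacing $a$ and $b$ by $a/(1+\e)$ and $b/(1+\e)$ preserves both commutativity relations and makes the maps contractive. Since this normalization mildly degrades the isometry constant, the only bookkeeping point is to have chosen the approximating representative close enough at the previous step so that the contractive maps remain $(1+\e)$-isometries; this is a routine adjustment of constants. The whole argument is short, and the only genuine subtlety---beyond invoking Lemma \ref{metricdouble}---is keeping the directions of $a,b$ consistent with the conventions of $d(\cdot,\cdot)$ and absorbing the contractive-normalization constant.
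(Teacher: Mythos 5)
Your proposal is correct and follows exactly the paper's own route: the paper derives Lemma \ref{denseness} from the compactness of the spaces $\mathcal U(m,l)$ (Lemma \ref{metricdouble}), the separability of metrizable compacta, and precisely the same rescaling remark (replacing $a,b$ by $a/(1+\e)$, $b/(1+\e)$, which preserves both commutation relations) to make the isometries contractive. The role-matching and constant-bookkeeping you flag are handled the same way in the paper, so there is nothing to add.
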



\subsection{Distances between double arrows and the role of dual separable spaces}

Almost double arrows $A\leftrightarrows B$ form a subset of $\mathfrak L(A, B) \oplus \mathfrak L(B, A)$, and thus the distance between two almost double arrows $(j, \overline j): A\leftrightarrows B$ and $(i, \overline i): A\leftrightarrows B$ is defined as $\max \{ \|j-i\|, \|\overline j - \overline i\|\}$. The following lemma is here to justify the additional hypothesis in Theorem \ref{KX}.
    \begin{lema} Let $F$ be a finite dimensional Banach space. There is a countable set of double arrows $F\to X$ which is dense in the set of all double arrows $F\to X$ if and only if $X^*$ is
    separable.
\end{lema}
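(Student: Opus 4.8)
The plan is to realize the set of double arrows $F\leftrightarrows X$ as a subset of the metric space $\mathfrak L(F,X)\oplus\mathfrak L(X,F)$, equipped with the max-of-norms distance recalled just before the lemma, and to determine exactly when that ambient space is separable. Fix a basis $e_1,\dots,e_n$ of $F$ with dual basis $e_1^*,\dots,e_n^*$. The assignments $T\mapsto(Te_1,\dots,Te_n)$ and $S\mapsto(e_1^*S,\dots,e_n^*S)$ identify $\mathfrak L(F,X)$ with $X^n$ and $\mathfrak L(X,F)$ with $(X^*)^n$ up to equivalent norms. Hence $\mathfrak L(F,X)$ is separable iff $X$ is separable, and $\mathfrak L(X,F)$ is separable iff $X^*$ is separable. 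Using the classical fact that separability of $X^*$ forces separability of $X$, the ambient space is separable precisely when $X^*$ is separable.

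The implication ``$X^*$ separable $\Rightarrow$ a countable dense family exists'' is then immediate: a subset of a separable metric space is separable, so the set of double arrows, being such a subset, admits a countable dense family.

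For the converse I would argue in the contrapositive and recover $X^*$ from the projection parts of double arrows via Bishop--Phelps. The transparent case is $\dim F=1$, where a double arrow is exactly a pair $(x,\phi)$ with $x\in S_X$, $\phi\in S_{X^*}$ and $\langle x,\phi\rangle=1$, and where the distance dominates $\|\phi-\phi'\|$. Thus a countable dense family of double arrows yields a countable set of functionals $\{\phi_k\}$ dense in the set $\Pi$ of norm-attaining unit functionals of $X^*$ (given $\phi\in\Pi$ attained at $x$, the double arrow $(x,\phi)$ is approximated by some $(x_k,\phi_k)$, so $\|\phi-\phi_k\|$ is small). Since $\Pi$ is dense in $S_{X^*}$ by the Bishop--Phelps theorem, $\{\phi_k\}$ is dense in $S_{X^*}$, whence $X^*$ is separable. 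To pass to general $F$ I would use that for any double arrow $(j,\overline j)$ one has $\overline j(B_X)=B_F$ (the inclusion $\subseteq$ comes from $\|\overline j\|\le1$ and $\supseteq$ from $\overline j j=1_F$), so each coordinate functional $e_i^*\overline j$ attains its norm $\|e_i^*\|_{F^*}$ on $B_X$; composing a double arrow $F\leftrightarrows X$ with the compact, hence separable, family of double arrows $\R\leftrightarrows F$ then reduces the problem to the one-dimensional situation.

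The main obstacle is precisely this reduction. Separability of the double arrows only exhibits those norm-attaining functionals of $X^*$ that occur as coordinates of a norm-one projection onto an isometric $1$-complemented copy of $F$, so the argument needs such copies to be abundant enough for Bishop--Phelps density to transfer to all of $S_{X^*}$. This is automatic for $\dim F=1$ and, more generally, whenever double arrows $F\leftrightarrows X$ are present in sufficient supply; I would therefore establish the forward implication under that nondegeneracy, which is exactly what is needed to justify the separable-dual hypothesis of Theorem \ref{KX}.
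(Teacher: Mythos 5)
Your sufficiency direction coincides with the paper's, which disposes of it in one sentence: the double arrows form a subset of $\mathfrak L(F,X)\oplus \mathfrak L(X,F)$, which is separable exactly when $X^*$ is; your identifications with $X^n$ and $(X^*)^n$ merely make that sentence explicit. Where you genuinely diverge is necessity. The paper works with an \emph{arbitrary} $\psi\in S_{X^*}$: it picks a norm-one $v$ with $\psi(v)=1-\varepsilon$, forms the pair $g(1)=v$, $q=(1-\varepsilon)^{-1}\psi(\cdot)v$, applies the density hypothesis to $(g,q)$, and estimates that the functional carried by the approximating double arrow is within $O(\varepsilon)$ of $\psi$. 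You instead apply density only at honest double arrows $(x,\phi)$ --- that is, at norm-attaining unit functionals --- and let Bishop--Phelps carry you from the norm-attaining functionals to all of $S_{X^*}$. Your extra appeal to Bishop--Phelps is not a luxury: the paper's pair $(g,q)$ has $\|q\|=(1-\varepsilon)^{-1}>1$, so it is \emph{not} a $(1,0,1)$-arrow, and the hypothesis (density in the set of double arrows) does not literally apply to it; to repair this one must know that $(g,q)$ lies close to a genuine double arrow, which is again Bishop--Phelps (in its Bollob\'as form, since both the functional and the attaining point must be controlled). So your route is the rigorous version of the paper's necessity step, at the price of a classical theorem the paper avoids naming.

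Concerning the reduction to $\dim F=1$, which you flag as the main obstacle and resolve only under a nondegeneracy hypothesis: your worry is well-founded, and the defect it points at belongs to the statement, not to your argument. The paper's ``Set $F=\R$, without loss of generality'' is no reduction at all when $F$ is a fixed space. Indeed, if $X$ contains no isometric $1$-complemented copy of $F$, the set of double arrows $F\leftrightarrows X$ is empty, so a countable dense subset exists trivially, while $X^*$ may well be nonseparable: take $F=\ell_2^2$ and $X=\ell_1(\Gamma)$ with $\Gamma$ uncountable, where every two-dimensional subspace sits inside a copy of $\ell_1$ and has a non-smooth norm, so not even an isometric copy of $\ell_2^2$ exists, yet $X^*=\ell_\infty(\Gamma)$. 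Hence the ``only if'' part is false for a general fixed $F$; it is true for $F=\R$, which is what you prove, and that is all that is needed if one reads the lemma as quantified over all finite-dimensional $F$ --- the reading under which it justifies the separable-dual hypothesis of Theorem \ref{KX}. Your observation that $\overline j(B_X)=B_F$ correctly identifies which functionals the general-$F$ hypothesis can reach; that it cannot reach the rest is, as the counterexample shows, not a gap you could have closed.
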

\begin{proof} (Necessity) Set $F=\R$, without loss of generality. Every double arrow $(f,p): \R\leftrightarrows  X$ is an isometric embedding $f$ and a
$1$-projection onto $f(\R)$. Or, which is the same, a norm one element $u\in X$ and a norm one functional $\phi\in X^*$ so that
$\phi(u)=1$. The projection is $p(x)=\phi(x)u$. Assume there is a countable set of $(f,p)$ so that for every $(g,q)$ there is
one of them for which $\|f-g\|+\|p-q\|\leq \varepsilon$. Let $\psi$ be a norm one element of $X^*$. Find norm one $v\in X$ for
which $\psi(v)=1-\varepsilon$ and then form the isometric embedding $g(1)=v$ with projection
$q(x)=(1-\varepsilon)^{-1}\psi(x)v$. Find one of those countable elements $(f,p)$ close to $(g,q)$. If $p(x)=\phi(x)u$ with
$f(1)=u$ then for $\|x\|=1$ one has

\begin{eqnarray*} |\phi(x) &-& (1-\varepsilon)^{-1}\psi(x)| = \|\phi(x)u - (1-\varepsilon)^{-1}\psi(x)u\|\\
&\leq& \|\phi(x)u - (1-\varepsilon)^{-1}\psi(x)v\| + \|(1-\varepsilon)^{-1}\psi(x)v - (1-\varepsilon)^{-1}\psi(x)u\| \\
&\leq& \|p - q\| + (1-\varepsilon)^{-1}\|v - u\| \\
&=& \|p - q\| + (1-\varepsilon)^{-1}\|f(1) - g(1)\| \\
&\leq& \|p - q\| + (1-\varepsilon)^{-1}\|f - g\| \\
&\leq&2\varepsilon + \frac{2\varepsilon}{1-\varepsilon}.
\end{eqnarray*}

(Sufficiency) The set of double arrows so it is separable when both  $\mathfrak L(F,X)$ and $\mathfrak
L(X,F)$ are separable; that is, when $X^*$ is separable.\end{proof}

\end{document}